\newtheorem{thm}{Theorem}[section]
\newtheorem{lem}[thm]{Lemma}
\newtheorem{cor}[thm]{Corollary}
\newtheorem{prop}[thm]{Proposition}
\theoremstyle{remark}
\newtheorem{rem}[thm]{Remark}
\newtheorem{Q}[thm]{Question}
\theoremstyle{definition}
\newtheorem{Def}[thm]{Definition}
\newcommand{\ra}{\rightarrow}
\newcommand{\Ra}{\Rightarrow}
\newcommand{\es}{\emptyset}
\newcommand{\ci}{\subseteq}
\newcommand{\A}{\mathcal{A}}
\newcommand{\al}{\alpha}
\newcommand{\be}{\beta}
\newcommand{\de}{\delta}
\newcommand{\e}{\varepsilon}
\newcommand{\la}{\lambda}
\newcommand{\ga}{\gamma}
\newcommand{\mb}{\mathbb}
\newcommand{\mc}{\mathcal}
\newcommand{\iy}{\infty}
\newcommand{\msc}{\mathscr}
\newcommand{\trcp}{\textrm{r.c.p.}}
\newcommand{\uc}{u.H.s.c.}
\newcommand{\lc}{l.H.s.c.}
\newcommand{\beqa}{\begin{eqnarray*}}
	\newcommand{\eeqa}{\end{eqnarray*}}
\newcounter{cnt1}
\newcounter{cnt2}
\newcounter{cnt3}
\newcounter{cnt4}
\newcommand{\blr}{\begin{list}{$($\roman{cnt1}$)$} {\usecounter{cnt1}
			\setlength{\topsep}{0pt} \setlength{\itemsep}{0pt}}}
	\newcommand{\blR}{\begin{list}{\Roman{cnt4}.\ } {\usecounter{cnt4}
				\setlength{\topsep}{0pt} \setlength{\itemsep}{0pt}}}
		\newcommand{\bla}{\begin{list}{$(\alph{cnt2})$} {\usecounter{cnt2}
					\setlength{\topsep}{0pt} \setlength{\itemsep}{0pt}}}
			\newcommand{\bln}{\begin{list}{$($\arabic{cnt3}$)$} {\usecounter{cnt3}
						\setlength{\topsep}{0pt} \setlength{\itemsep}{0pt}}}
				\newcommand{\el}{\end{list}}
\begin{document}
					\title[On property-$(P_1)$ and semi-continuity properties]{On property-$(P_1)$ and semi-continuity properties of restricted Chebyshev-center maps in $\ell_{\iy}$-direct sums}
				
				\author[T. Thomas]{Teena Thomas}
				\address[Teena Thomas]{Indian Institute of Technology Hyderabad\\ Sangareddy\\Telangana, India-502284}
				\email{ma19resch11003@iith.ac.in}
				
				\begin{abstract}
					For a compact Hausdorff space $S$, we prove that the closed unit ball of a closed linear subalgebra of the space of real-valued continuous functions on $S$, denoted by $C(S)$, satisfies property-$(P_1)$ (the set-valued generalization of strong proximinality) for the non-empty closed bounded subsets of the bidual of $C(S)$. Various stability results related to property-$(P_1)$ and semi-continuity properties of restricted Chebyshev-center maps are also established. As a consequence, we derive that if $Y$ is a proximinal finite co-dimensional subspace of $c_0$ then the closed unit ball of $Y$ satisfies property-$(P_1)$ for the non-empty closed bounded subsets of $\ell_{\iy}$ and the restricted Chebyshev-center map of the closed unit ball of $Y$ is Hausdorff metric continuous on the class of non-empty closed bounded subsets of $\ell_{\iy}$. We also investigate a variant of the transitivity property, similar to the one discussed in [C. R. Jayanarayanan and T. Paul, {\em Strong proximinality and intersection properties of balls in Banach spaces}, J. Math. Anal. Appl., 426(2):~1217--1231, 2015], for property-$(P_1)$.
				\end{abstract}
				
				\subjclass[2010]{Primary 41A65; Secondary 46E15, 46A55, 46B20}
				
				\keywords{property-$(P_1)$, subalgebra, polyhedral Banach space, restricted Chebyshev-center map, lower and upper Hausdorff semi-continuity}
				\maketitle
				
				\section{Introduction}
				In this article, we explore the (semi-)continuity properties of the restricted Chebyshev-center maps in certain classes of Banach spaces through the notion of property-$(P_1)$, which is the set-valued analogue of strong proximinality. For a real Banach space $X$, a non-empty closed convex subset $V$ of $X$ and a family $\msc{F}$ of non-empty closed bounded subsets of $X$, property-$(P_1)$ is defined for a triplet $(X,V,\msc{F})$. The main motivation to study this property is 
				\begin{thm}[{\cite[Theorem~5]{Mach2}}]\label{Rem2}
					If the triplet $(X,V,\msc{F})$ has property-$(P_1)$ then the restricted Chebyshev-center map of $V$, which maps every set $F \in \msc{F}$ to the set of restricted Chebyshev centers of $F$ in $V$, is upper semi-continuous on $\msc{F}$ with respect to the Hausdorff metric.
				\end{thm}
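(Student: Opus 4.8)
\textit{Proof proposal.} The plan is to unwind the definitions and reduce the statement to an elementary $\e$--$\delta$ argument, the only analytic input being that the ``radius functional'' $v \mapsto r_v(F) := \sup_{f \in F}\|v-f\|$ ($v \in V$, $F$ a non-empty closed bounded set) depends $1$-Lipschitzly on $F$ in the Hausdorff metric, uniformly in $v$. Throughout I write $\mathrm{rad}_V(F) = \inf_{v \in V} r_v(F)$ for the restricted Chebyshev radius and $\mathrm{cent}_V(F) = \{v \in V : r_v(F) = \mathrm{rad}_V(F)\}$ for the set of restricted Chebyshev centres, and I use the (one-sided) definition of upper Hausdorff semi-continuity: the restricted Chebyshev-center map is \uc\ at $F_0$ if for every $\e>0$ there is $\eta>0$ such that $d_H(F,F_0)<\eta$, $F \in \msc{F}$, implies $\mathrm{cent}_V(F) \ci \{w : \mathrm{dist}(w,\mathrm{cent}_V(F_0))<\e\}$.

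First I would record the basic estimate. If $F,G$ are non-empty closed bounded subsets of $X$ with $d_H(F,G)<\eta$, then for every $v \in V$ and every $f \in F$ there is $g \in G$ with $\|f-g\|<\eta$, whence $\|v-f\| \le \|v-g\|+\eta \le r_v(G)+\eta$; taking the supremum over $f \in F$ and then the symmetric inequality gives $|r_v(F)-r_v(G)| \le \eta$ for all $v \in V$. Passing to the infimum over $v \in V$ yields $|\mathrm{rad}_V(F)-\mathrm{rad}_V(G)| \le \eta$.

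Next I would fix $F_0 \in \msc{F}$ and $\e>0$ and invoke property-$(P_1)$ \emph{at the base point} $F_0$: there is $\de>0$ such that every $v \in V$ with $r_v(F_0) \le \mathrm{rad}_V(F_0)+\de$ satisfies $\mathrm{dist}(v,\mathrm{cent}_V(F_0))<\e$ (in particular $\mathrm{cent}_V(F_0) \ne \es$). Set $\eta := \de/3$. If $F \in \msc{F}$ with $d_H(F,F_0)<\eta$ and $v \in \mathrm{cent}_V(F)$, then by the estimate of the previous paragraph,
\[
r_v(F_0) \;\le\; r_v(F)+\eta \;=\; \mathrm{rad}_V(F)+\eta \;\le\; \mathrm{rad}_V(F_0)+2\eta \;<\; \mathrm{rad}_V(F_0)+\de,
\]
so $\mathrm{dist}(v,\mathrm{cent}_V(F_0))<\e$. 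Since $v \in \mathrm{cent}_V(F)$ was arbitrary, $\mathrm{cent}_V(F) \ci \{w : \mathrm{dist}(w,\mathrm{cent}_V(F_0))<\e\}$, which is exactly the desired semi-continuity at $F_0$.

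I do not anticipate a genuine obstacle: this is the set-valued transcription of the classical argument that strong proximinality forces continuity of the metric projection. The only points needing care are (i) that property-$(P_1)$ is applied at $F_0$, where it is available, rather than at the perturbed sets $F$ (for which one only knows $\mathrm{cent}_V(F) \ne \es$), and (ii) that one uses the one-sided notion of upper semi-continuity — the reverse inclusion $\mathrm{cent}_V(F_0) \ci (\mathrm{cent}_V(F))_\e$ is a lower-semi-continuity statement and is neither needed nor claimed here.
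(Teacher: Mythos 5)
Your argument is correct: the $1$-Lipschitz dependence of $r(v,\cdot)$ and $\mathrm{rad}_V(\cdot)$ on the Hausdorff metric (which is exactly Lemma~\ref{L2} of the paper) combined with property-$(P_1)$ applied at the base set $F_0$ gives $\mathrm{cent}_V(F)\ci \mathrm{cent}_V(F_0,\de)\ci \mathrm{cent}_V(F_0)+\e B_X$, which is the definition of upper Hausdorff semi-continuity used here. The paper does not reprove this statement --- it is quoted from Mach --- but your proof is the standard one underlying that citation, so there is nothing to correct.
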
	
			
				Throughout this paper, $X$ denotes a real Banach space. We assume only norm closed and linear subspaces. For $x \in X$ and $r>0$, let $B_{X}[x,r] = \{y \in X\colon \|x-y\| \leq r\}$ and $B_{X}(x,r)= \{y \in X\colon \|x-y\| < r\}$. For simplicity, we denote $B_{X}[0,1]$ by $B_X$. The closed unit ball of a subspace $Y$ of $X$ is the set $B_{Y} \coloneqq B_X \cap Y$. Let $\mc{CV}(X)$ be the class of all non-empty closed convex subsets of $X$. For $V \in \mc{CV}(X)$, we denote the classes of all non-empty closed bounded, compact and finite subsets of $V$ by $\mc{CB}(V)$, $\mc{K}(V)$ and $\mc{F}(V)$ respectively. For $x \in X$ and a bounded subset $A$ of $X$, we define $d(x,A) = \inf \{\|x-a\|\colon a \in A\}$ and $r (x,A) = \sup \{\|x-a\|\colon a \in A\}$. For a set $A \ci X$ and $\la>0$, let $\la A = \{\la a \colon a \in A\}$. 
				
				Let $V \in \mc{CV}(X)$ and $B \in \mc{CB}(X)$. The \textit{restricted Chebyshev radius} of $B$ in $V$ is defined as the number $\emph{rad}_{V}(B) \coloneqq \inf_{v \in V} r(v,B)$. The elements of the set $\emph{cent}_{V}(B) \coloneqq \{v \in V\colon \emph{rad}_{V}(B) = r(v,B)\}$ are called the \textit{restricted  Chebyshev centers} of $B$ in $V$. For each $\de>0$, we define $\emph{cent}_{V}(B,\de) = \{v \in V\colon r(v,F) \leq \emph{rad}_{V}(F) + \de\}$. 
				
				Let $\msc{F} \ci \mc{CB}(X)$. The pair $(V,\msc{F})$ satisfies the \textit{restricted center property} $($\trcp$)$ if for each $F \in \msc{F}$, $\emph{cent}_{V}(F) \neq \es$. The set-valued map, $\emph{cent}_{V}(.)$, which maps each set $F \in \msc{F}$ to the set $\emph{cent}_{V}(F)$, is called the \textit{restricted Chebyshev-center map} of $V$ on $\msc{F}$. We refer to \cite{PN} for the terminologies defined above.
				
				In particular, let $\msc{F}$ be the class of all singleton subsets of $X$. If $(V, \msc{F})$ has \trcp~then $V$ is said to be \textit{proximinal} in $X$ and the restricted Chebyshev-center map of $V$ on $\msc{F}$, denoted by $P_{V}$, is called the \textit{the metric projection} from $X$ onto $V$.
				
				We discuss the continuity properties of the metric projections and restricted Chebyshev-center maps with respect to the Hausdorff metric. We recall that for a Banach space $X$, the Hausdorff metric, denoted by $d_H$, is defined as follows$\colon$ for each $A,B \in \mc{CB}(X)$, \[d_{H}(A,B) = \inf\{t>0\colon A \ci B + t B_{X}\mbox{ and }B \ci A + t B_{X}\}.\]
				
				Let $T$ be a topological space. Let $\Phi$ be a set-valued map from $T$ into $\mc{CB}(X)$. We say $\Phi$ is \textit{lower Hausdorff semi-continuous} (\lc) on $T$ if for each $t \in T$ and $\e>0$, there exists a neighbourhood $\msc{N}(t,\e)$ of $t$ such that for each $s \in \msc{N}(t,\e)$ and $z \in \Phi(t)$, $B_{X}(z,\e) \cap \Phi(s) \neq \es.$ The map $\Phi$ is \textit{upper Hausdorff semi-continuous} (\uc) on $T$ if for each $t \in T$ and $\e>0$, there exists a neighbourhood $\msc{N}(t,\e)$ of $t$ such that for each $s \in \msc{N}(t,\e)$, $\Phi(s) \ci \Phi(t) + \e B_X.$ The map $\Phi$ is \textit{Hausdorff metric continuous} on $T$ if for each $t \in T$, the single-valued map $\Phi$ from $T$ into the metric space $(\mc{CB}(X), d_{H})$ is continuous at $t \in T$. By \cite[Remark~2.8]{VI}, $\Phi$ is Hausdorff metric continuous on $T$ if and only if $\Phi$ is both \lc~and \uc~on $T$.
				
				In this article, we consider the following definition of property-$(P_1)$ which is equivalent to that introduced by J. Mach in \cite{Mach2}.
				\begin{Def}[{\cite[Definition~1.1]{LPST}}]\label{Def1}
					Let $X$ be a Banach space, $V \in \mc{CV}(X)$ and $\msc{F} \ci \mc{CB}(X)$ such that $(V,\msc{F})$ has \trcp. Then the triplet $(X,V,\msc{F})$ has \textit{property-$(P_1)$} if for each $\e>0$ and $F \in \msc{F}$, there exists $\de(\e,F)>0$ such that \[\emph{cent}_{V}(F,\de) \ci \emph{cent}_{V}(F) + \e B_{X}.\]
				\end{Def}
				
				\begin{rem}\label{rem2}
					With the notations as above, let $(V,\msc{F})$ have \trcp.
					\blr
					\item If $\msc{F}$ is the class of all singleton subsets of $X$ then we say $V$ is \textit{strongly proximinal} in $X$ (see \cite{GI}). 
					\item It is easy to verify that $(X,V,\msc{F})$ has property-$(P_1)$ if and only if for each $\epsilon>0$ and $F \in \msc{F}$, there exists $\de>0$ such that  \[\mc{S}(F,\de) \coloneqq \sup \{d(v,\emph{cent}_{V}(F))\colon v \in V\mbox{ and }r(v,F) \leq \emph{rad}_{V}(F) +\de\} < \e.\]
					\el
				\end{rem}
				
				We now recall a notion in Banach spaces which is stronger than strong proximinality. A subspace $J$ of a Banach space $X$ is an \textit{$M$-ideal} in $X$ if there exists a linear projection $P\colon X^\ast \ra X^\ast$ such that for each $x^\ast \in X^\ast$, $\|x^\ast\| = \|Px^\ast\| + \|x^\ast - Px^\ast\|$ and the range of $P$ is the annihilator of $J$ in $X^\ast$. A particular case of $M$-ideals are $M$-summands. A subspace $J$ of a Banach space $X$ is an \textit{$M$-summand} in $X$ if there exists a linear projection $P\colon X \ra X$ such that for each $x \in X$, $\|x\| = \max \{\|Px\|, \|x - Px\|\}$ and the range of $P$ is $J$. We refer to \cite{HWW} for a detailed study on $M$-ideals. $M$-ideals are strongly proximinal; see \cite{HWW} and \cite[Proposition~3.3]{DuNa2}. However, the example provided by Vesel\'{y} in \cite{Ves} shows that in general, an $M$-ideal in a Banach space may not admit Chebyshev centers for its closed bounded subsets. 
				
				For a topological space $T$ and a Banach space $X$, $C_{b}(T,X)$ denote the Banach space of $X$-valued bounded continuous functions on $T$. If $T$ is a compact Hausdorff space and $X = \mb{R}$ then $C_{b}(T,X)$ is simply denoted as $C(T)$. 
				
				In \cite{GV}, Godefroy and Indumathi proved that for a subspace $X$ of $c_0$ and a finite co-dimensional linear subspace $Y$ of $X$, if every hyperplane of $X$ containing $Y$ is proximinal in $X$ then $Y$ is proximinal in $X$.  
				Later, Indumathi proved the following result in \cite[Theorem~4.1]{VI}.
				\begin{thm}\label{Th1}
					Let $Y$ be a proximinal finite co-dimensional linear subspace of $c_0$. Then $Y$ is proximinal in $\ell_{\iy}$ and the metric projection $P_Y$ from $\ell_{\iy}$ onto $Y$ is Hausdorff metric continuous. 
				\end{thm}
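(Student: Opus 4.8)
The plan is to reduce Theorem~\ref{Th1} to a finite-dimensional statement about polyhedral spaces. First I would use the known structure of proximinal finite co-dimensional subspaces of $c_0$: such a $Y$ satisfies $Y^{\perp}\ci c_{00}$, the finitely supported functionals (this is, in essence, the content of \cite{GV} together with the elementary fact that a hyperplane $\ker f$ of $c_0$ is proximinal if and only if $f\in\ell_1=c_0^{\ast}$ is finitely supported). Hence there are $m\in\mb{N}$ and a linear subspace $W$ of the polyhedral space $\ell_\iy^m\coloneqq(\mb{R}^m,\|\cdot\|_\iy)$ with $Y=\{x\in c_0\colon(x(1),\dots,x(m))\in W\}$. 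Writing $\ell_\iy=\ell_\iy^m\oplus_\iy Z$, where $Z$ carries the coordinates $\{m+1,m+2,\dots\}$, and letting $Z_0\ci Z$ denote its null sequences (an isometric copy of $c_0$), this says $Y=W\oplus_\iy Z_0$. For $x=(\hat x,\check x)$ put $a(x)\coloneqq\limsup_{j}|\check x(j)|$; both $x\mapsto a(x)$ and $x\mapsto d(x,Y)$ are $1$-Lipschitz on $\ell_\iy$ (for the former, $\limsup_j|x(j)|\le\limsup_j|x'(j)|+\|x-x'\|_\iy$).

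Next I would establish $d(x,Y)=\max(d(\hat x,W),a(x))$ and that the distance is attained. Put $\rho\coloneqq\max(d(\hat x,W),a(x))$, pick a nearest point $w^{\ast}\in W$ to $\hat x$ (it exists, $W$ being finite-dimensional), and for each $j>m$ let $y(j)$ be the point of $[\check x(j)-\rho,\check x(j)+\rho]$ nearest to $0$, so $|y(j)|=(|\check x(j)|-\rho)_{+}$. Since $\rho\ge a(x)$, this tends to $0$, so $y\coloneqq(w^{\ast},(y(j))_{j>m})$ lies in $Y$ with $\|x-y\|_\iy\le\rho$; the reverse inequality is immediate since any $y\in Y\ci c_0$ has $\|x-y\|_\iy\ge\limsup_j|x(j)-y(j)|=a(x)$ and $\|x-y\|_\iy\ge\|\hat x-\hat y\|_\iy\ge d(\hat x,W)$. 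Thus $Y$ is proximinal in $\ell_\iy$, and, writing $d\coloneqq d(x,Y)$, the nearest-point set splits as $P_Y(x)=W^{\ast}(x)\oplus_\iy Q(x)$ (the product of the two factors inside $\ell_\iy=\ell_\iy^m\oplus_\iy Z$), where $W^{\ast}(x)\coloneqq\{w\in W\colon\|\hat x-w\|_\iy\le d\}$ and $Q(x)\coloneqq\{z\in Z_0\colon|\check x(j)-z(j)|\le d\text{ for all }j>m\}$, each a non-empty closed bounded convex set.

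For the Hausdorff metric continuity of $P_Y$, I would use $d_{H}(A_{1}\oplus_\iy A_{2},B_{1}\oplus_\iy B_{2})=\max(d_{H}(A_{1},B_{1}),d_{H}(A_{2},B_{2}))$, which reduces matters to $x\mapsto W^{\ast}(x)$ and $x\mapsto Q(x)$ separately; by \cite[Remark~2.8]{VI} it then suffices to verify each is both lower and upper Hausdorff semi-continuous. The map $Q$ is in fact $2$-Lipschitz: given $z\in Q(x)$, clamp each coordinate of $z$ into $[\check x'(j)-d(x',Y),\check x'(j)+d(x',Y)]$; this displaces $z$ by at most $\|x-x'\|_\iy+|d(x,Y)-d(x',Y)|\le2\|x-x'\|_\iy$ in the sup-norm, and the outcome still lies in $Z_0$ because $d(x',Y)\ge a(x')$ forces the distance from $0$ to the $j$-th interval to tend to $0$; by symmetry $d_H(Q(x),Q(x'))\le2\|x-x'\|_\iy$. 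For $W^{\ast}$, upper Hausdorff semi-continuity is automatic from finite-dimensionality and continuity of $x\mapsto(\hat x,d(x,Y))$. For the lower one at $x_{0}$, take $w_{0}\in W^{\ast}(x_{0})$: if $\|\hat x_{0}-w_{0}\|_\iy<d(x_{0},Y)$ then $w_{0}\in W^{\ast}(x)$ for all $x$ near $x_{0}$; if $\|\hat x_{0}-w_{0}\|_\iy=d(x_{0},Y)$ while $d(\hat x_{0},W)<d(x_{0},Y)$, then small convex combinations of $w_{0}$ with a nearest point of $\hat x_{0}$ in $W$ satisfy the strict inequality and reduce to the previous case; and if $\|\hat x_{0}-w_{0}\|_\iy=d(\hat x_{0},W)=d(x_{0},Y)$, so that $w_{0}$ is a nearest point of $\hat x_{0}$ in $W$, then since $W^{\ast}(x)\supseteq P_{W}(\hat x)$ for every $x$ (where $P_W$ denotes the metric projection of $\ell_\iy^m$ onto $W$), the claim follows from lower Hausdorff semi-continuity of $P_{W}$ at $\hat x_{0}$.

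Thus the whole argument comes down to a single non-elementary fact, the Hausdorff metric continuity of the metric projection $P_{W}$ onto a linear subspace $W$ of the finite-dimensional polyhedral space $\ell_\iy^m$, which is classical; I expect this to be the only real obstacle. The one delicate case in the reduction is the tangential situation $d(\hat x_{0},W)=a(x_{0})$, where $W^{\ast}$ could a priori jump in size; it is handled cleanly by the inclusion $W^{\ast}(x)\supseteq P_{W}(\hat x)$, valid for all $x$, which shows that enlarging the radius from $d(\hat x,W)$ to $d(x,Y)$ can only enlarge $W^{\ast}(x)$ and hence cannot destroy lower semi-continuity.
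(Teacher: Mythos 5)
Your proof is correct, but note that the paper does not actually prove Theorem~\ref{Th1}: it is quoted from \cite{VI}, and the paper's own contribution is the pair of strengthenings in Propositions~\ref{T5} and~\ref{T7}, which recover Theorem~\ref{Th1} as the singleton case (passing from $B_Y$ back to $Y$ via Proposition~\ref{prop2}). Both you and the paper start from the same decomposition $\ell_{\iy}\cong \ell_{\iy}^{m}\oplus_{\iy}\ell_{\iy}$, $Y\cong W\oplus_{\iy}c_0$ coming from $Y^{\perp}\ci c_{00}$ (one small attribution point: the implication you need, that proximinality of $Y$ forces $Y^{\perp}\ci c_{00}$, is Garkavi's norm-attainment condition for proximinal finite co-dimensional subspaces rather than the converse direction proved in \cite{GV}; it is standard and is exactly the decomposition the paper imports from \cite[Section~4]{VI}). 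After that the routes diverge. The paper treats the two factors abstractly --- the $c_0$-in-$\ell_{\iy}$ factor via the uniform-convexity/subalgebra machinery of Section~\ref{S2} (Corollary~\ref{C1}), the finite-dimensional factor via compactness and Tsar\textquoteright kov's Theorem~\ref{T3}, glued by the stability results of Sections~\ref{S3}--\ref{S4} --- which buys property-$(P_1)$ and Hausdorff continuity of $\textrm{cent}_{B_Y}(.)$ on all of $\mc{CB}(\ell_{\iy})$. You instead compute $d(x,Y)=\max(d(\hat x,W),\limsup_j|\check x(j)|)$ and the product description $P_Y(x)=W^{\ast}(x)\times Q(x)$ explicitly and check continuity by hand; this is self-contained, gives a quantitative $2$-Lipschitz bound for the tail factor $Q$, and isolates the only non-elementary input as the Hausdorff continuity of $P_W$ for a subspace $W$ of $\ell_{\iy}^{m}$ (the singleton case of Theorem~\ref{T3}), but it covers only the metric projection, not the restricted Chebyshev-center maps. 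Your case analysis for the lower semi-continuity of $W^{\ast}$ is sound, including the tangential case $d(\hat x_0,W)=d(x_0,Y)$, where $W^{\ast}(x_0)$ collapses to $P_W(\hat x_0)$ and the inclusion $W^{\ast}(x)\supseteq P_W(\hat x)$ does the work; just make sure the convex-combination perturbation in the intermediate case is performed with a parameter chosen uniformly over $w_0\in W^{\ast}(x_0)$ (possible since $\|w_0-w^{\ast}\|\le 2\,d(x_0,Y)$ there), because the paper's definition of lower Hausdorff semi-continuity requires a single neighbourhood that works for every point of the value set.
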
  
				Jayanarayanan and Lalithambigai improved Theorem~\ref{Th1} in \cite[Corollary~3.7]{CRLa}. They proved that if $Y$ is a proximinal finite co-dimensional subspace of $c_0$ then $B_Y $ is strongly proximinal in $\ell_{\iy}$.
				
				The present paper generalizes Theorem~\ref{Th1} and proves that the triplet $(\ell_{\iy}, B_Y,\mc{CB}(\ell_{\iy}))$ satisfies property-$(P_1)$ and the map $\emph{cent}_{B_Y}(.)$ Hausdorff metric continuous on $\mc{CB}(\ell_{\iy})$ whenever $Y$ is a proximinal finite co-dimensional subspace of $c_0$ in Sections~\ref{S3} and \ref{S4} respectively. To this end, we recall 
				\begin{thm}[{\cite[Theorem~2]{Amir}}]\label{T8}
					Let $T$ be a topological space and $X$ be a uniformly convex Banach space. Then for each $B \in \mc{CB}(C_{b}(T,X))$, $\textrm{cent}_{C_{b}(T,X)}(B) \neq \es$ and the map $\textrm{cent}_{C_{b}(T,X)}(.)$ is uniformly Hausdorff continuous on $\mc{CB}(C_{b}(T,X))$.
				\end{thm}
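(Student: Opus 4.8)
The plan is to push everything down to the uniform convexity of $X$ via the sections of $B$, and then to build a restricted Chebyshev centre of $B$ by a convergent correction scheme. For $f \in C_b(T,X)$ and $g \in B$ one has $\|f-g\|_\iy = \sup_{t\in T}\|f(t)-g(t)\|$, so, writing $B(t)\coloneqq\{g(t)\colon g\in B\}$ (a non-empty bounded subset of $X$), $r(f,B)=\sup_{t\in T}r(f(t),B(t))$. Hence $\textrm{rad}_{C_b(T,X)}(B)=\inf_{f}\sup_{t}r(f(t),B(t))=:\rho$, and $f$ is a restricted Chebyshev centre of $B$ exactly when $f(t)\in D(t)\coloneqq\bigcap_{g\in B}B_X[g(t),\rho]$ for every $t\in T$. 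The delicate feature — which rules out a naive pointwise construction — is that $\rho$ is a genuinely global quantity, in general strictly larger than $\sup_t\textrm{rad}_X(B(t))$, and the pointwise centre $t\mapsto\textrm{cent}_X(B(t))$ (single-valued by uniform convexity) need not be continuous.

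Next I would record the quantitative well-posedness of Chebyshev centres in $X$ coming from uniform convexity: there is a function $\omega_R$, depending only on $R$ and the modulus of convexity $\de_X$ of $X$, with $\omega_R(\de)\to0$ as $\de\to0$, such that $\textrm{cent}_X(A,\de)\ci\textrm{cent}_X(A)+\omega_R(\de)B_X$ for every bounded $A\ci X$ with $\textrm{rad}_X(A)\le R$. This is the standard midpoint argument: if $y\in\textrm{cent}_X(A,\de)$ and $c=\textrm{cent}_X(A)$, then $\|y-a\|$ and $\|c-a\|$ are both at most $\textrm{rad}_X(A)+\de$, while $r\big(\tfrac{y+c}{2},A\big)\ge\textrm{rad}_X(A)$; feeding this into the definition of $\de_X$ (and treating the case of small $\textrm{rad}_X(A)$ directly) bounds $\|y-c\|$. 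The same estimate gives that $\textrm{cent}_X(\cdot)$ is uniformly Hausdorff continuous on every bounded family of subsets of $X$.

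The crux is the existence of a centre of $B$ in $C_b(T,X)$, for which I would iterate. Start with any $f_0\in C_b(T,X)$ of finite radius (a constant function suffices). Given $f_n\in C_b(T,X)$ with $r(f_n,B)\le\rho+\de_n$, define $f_{n+1}$ by moving $f_n(t)$ towards $\textrm{cent}_X(B(t))$ by precisely the fraction needed to force $r(f_{n+1}(t),B(t))\le\rho+\de_{n+1}$ (and leaving it fixed where this already holds). Convexity of $x\mapsto r(x,B(t))$ makes that fraction explicit, and combining it with $\|f_n(t)-\textrm{cent}_X(B(t))\|\le\omega_R\big(r(f_n(t),B(t))-\textrm{rad}_X(B(t))\big)$ from the previous step bounds $\|f_{n+1}-f_n\|_\iy$ in terms of $\omega_R$ and the current slack; with a suitable choice of $\de_n\downarrow0$ this makes $(f_n)$ Cauchy in the Banach space $C_b(T,X)$, and its limit $f^\ast$ satisfies $r(f^\ast,B)=\lim_n(\rho+\de_n)=\rho$, so $\textrm{cent}_{C_b(T,X)}(B)\neq\es$. \emph{This step is the main obstacle}: the prescription for $f_{n+1}$ is pointwise, yet $t\mapsto\textrm{cent}_X(B(t))$ is only semicontinuous, so one must verify that each $f_{n+1}$ is again continuous. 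The mechanism one hopes to exploit is that the correction at $t$ is scaled by the local slack $r(f_n(t),B(t))-\rho$, which is small exactly where the pointwise centre misbehaves, so that the correction — a product of a small scalar field and a bounded vector field — still varies continuously; making this rigorous, together with the right decay of $(\de_n)$, is where uniform convexity of $X$ is used in an essential way.

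Finally, for the uniform Hausdorff continuity of $\textrm{cent}_{C_b(T,X)}(\cdot)$ I would reuse the scheme. First, $|\textrm{rad}(B_1)-\textrm{rad}(B_2)|\le d_H(B_1,B_2)$ directly. Second, running the above iteration from an arbitrary $f\in\textrm{cent}_{C_b(T,X)}(B,\de)$ (i.e.\ with $\de_0=\de$) lands, by the same estimates, at a genuine centre within $\eta(\de)\coloneqq\sum_n\|f_{n+1}-f_n\|_\iy$ of $f$, where $\eta(\de)\to0$ as $\de\to0$ and $\eta$ depends only on $\de_X$ and a bound for the sets; that is, $\textrm{cent}_{C_b(T,X)}(B,\de)\ci\textrm{cent}_{C_b(T,X)}(B)+\eta(\de)B_{C_b(T,X)}$, uniformly over each bounded family — a uniform property-$(P_1)$ estimate. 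The usual chaining then concludes: if $f\in\textrm{cent}(B_1)$ then $r(f,B_2)\le\textrm{rad}(B_1)+d_H(B_1,B_2)\le\textrm{rad}(B_2)+2d_H(B_1,B_2)$, so $f\in\textrm{cent}(B_2,2d_H(B_1,B_2))$ and hence lies within $\eta\big(2d_H(B_1,B_2)\big)$ of $\textrm{cent}(B_2)$; by symmetry $d_H\big(\textrm{cent}(B_1),\textrm{cent}(B_2)\big)\le\eta\big(2d_H(B_1,B_2)\big)$, which is the claimed uniform Hausdorff continuity.
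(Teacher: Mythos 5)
Your reduction to the sections $B(t)$ and your quantitative well-posedness estimate for Chebyshev centres in a uniformly convex $X$ are both fine, but the proof has a genuine gap at exactly the point you flag as ``the main obstacle'', and the mechanism you propose for closing it does not work. You construct $f_{n+1}$ by moving $f_n(t)$ towards the pointwise centre $c(t)\coloneqq\textrm{cent}_X(B(t))$, and you hope continuity of $f_{n+1}$ survives because the correction is scaled by the local slack, ``which is small exactly where the pointwise centre misbehaves''. That claim is unjustified and false in general: $t\mapsto c(t)$ can be discontinuous at points where $r(f_n(t),B(t))-\rho$ is bounded away from zero, since the size of the slack of $f_n$ at $t$ has nothing to do with the geometry responsible for a jump of $c$ at $t$. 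A product of a continuous scalar field with a discontinuous vector field is continuous only where the scalar vanishes, so without a proof that your scalar vanishes on the discontinuity set of $c$ you cannot conclude $f_{n+1}\in C_{b}(T,X)$, and the whole iteration --- hence both the existence of a centre and the property-$(P_1)$-type estimate you derive from it --- collapses. (A further warning sign: your bound on $\|f_{n+1}-f_n\|_{\iy}$ feeds the \emph{pointwise} slack $r(f_n(t),B(t))-\textrm{rad}_X(B(t))$ into $\omega_R$, and this quantity need not be small even when $r(f_n,B)-\rho$ is, because $\rho$ can strictly exceed $\sup_t\textrm{rad}_X(B(t))$ --- the very phenomenon you point out in your first paragraph.)

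The argument of Amir that the paper adapts in the proof of Theorem~\ref{P9} avoids the pointwise centre entirely: the target of the correction is itself a continuous function. Given $f_0$ with $r(f_0,B)\le \rho+\de^{\prime}(\e)$, one picks a \emph{global} near-minimizer $g\in C_{b}(T,X)$ with $r(g,B)\le\rho+\de^{\prime}(\e/2)$ and sets $f_1(t)=f_0(t)+\al(t)\,(g(t)-f_0(t))$ with $\al(t)=\min\{1,\,2\e/\|g(t)-f_0(t)\|\}$; this $\al$ is a continuous scalar field, so $f_1\in C_{b}(T,X)$ automatically and $\|f_1-f_0\|\le 2\e$ by construction. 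Uniform convexity enters only to prove the pointwise estimate $\|f_1(t)-b(t)\|\le\rho+\de^{\prime}(\e/2)$, i.e.\ that the move towards $g(t)$, even when truncated at length $2\e$, already achieves the improved radius bound. With that substitution your iteration, the resulting Cauchy estimate $\|f_{n+1}-f_n\|\le 2\e/2^{n}$, and your final chaining argument for the uniform Hausdorff continuity all go through; as written, however, the existence step is not proved.
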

			
				In Section~\ref{S2}, we improve Theorem~\ref{T8} and obtain a few consequences which are of interest. We prove that for a compact Hausdorff space $S$ and a uniformly convex Banach space $X$, $(C_{b}(T,X), B_{C_{b}(T,X)}, \mc{CB}(C_{b}(T,X)))$ has property-$(P_1)$ and the map $\emph{cent}_{B_{C_{b}(T,X)}}(.)$ is uniformly Hausdorff metric continuous on $\mc{CB}(C_{b}(T,X))$. Consequently, we obtain that $(\ell_{\iy}, B_{c_0}, \mc{CB}(\ell_{\iy}))$ has property-$(P_1)$ which generalizes \cite[Theorem~3.1]{CRLa}. Moreover, we prove that for a compact Hausdorff space $S$ and a closed linear subalgebra $\A$ of $C(S)$, the triplet $(C(S)^{\ast \ast},B_{\A}, \mc{CB}(C(S)^{\ast \ast}))$ satisfies property-$(P_1)$. 
				
				In Section~\ref{S3}, apart from the results mentioned earlier, we also establish the stability of \trcp~and property-$(P_1)$ in the $\ell_{\iy}$-direct sums. Given two subspaces $Y_1$ and $Y_2$ of Banach spaces $X_1$ and $X_2$, let $X$ be the $\ell_{\iy}$-direct sum of $X_1$ and $X_2$ and $Y$ be that of $Y_1$ and $Y_2$. We prove that $(X,Y,\mc{CB}(X))$ has property-$(P_1)$ if and only if for each $i=1,2$, $(X_i,Y_i,\mc{CB}(X_i))$ has property-$(P_1)$. 
				
				In Section~$\ref{S4}$, we derive the stability of the lower and upper semi-continuity properties of the restricted Chebyshev-center maps of $\ell_{\iy}$-direct sums under certain assumptions. It is proved in \cite{T1} that for a subspace $Y$ of a Banach space $X$, if $(X,B_Y,\mc{CB}(X))$ has property-$(P_1)$ then so does $(X,Y,\mc{CB}(X))$. In this section, we establish that for a subspace $Y$ of a Banach space $X$, if $(B_Y,\mc{CB}(X))$ has \trcp~then the Hausdorff metric continuity of the map $\emph{cent}_{B_Y}(.)$ on $\mc{CB}(X)$ implies that of the map $\emph{cent}_{Y}(.)$ on $\mc{CB}(X)$. 
				
				 In \cite{CT}, the authors proved a variation of the transitivity problem for strong proximinality. 
				\begin{thm}[{\cite[Theorem~3.13]{CT}}]\label{T9}
					Let $X$ be a Banach space, $J$ be an $M$-ideal in $X$ and $Y$ be a finite co-dimensional subspace of $X$ such that $Y \ci J \ci X$. If $Y$ is strongly proximinal in $J$, then $Y$ is strongly proximinal in $X$.
				\end{thm}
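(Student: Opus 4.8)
The plan is to move the whole configuration to the biduals, where the hypothesis that $J$ is an $M$-ideal becomes the far more rigid statement that $J^{\ast\ast}$ is an $M$-summand, and where the $\ell_\iy$-type stability can be applied. Since $J$ is an $M$-ideal in $X$, the $L$-decomposition $X^\ast = J^\perp \oplus_1 J^\ast$ coming from the $M$-ideal projection dualises to an $M$-summand decomposition $X^{\ast\ast} = J^{\perp\perp} \oplus_\iy (J^\perp)^\ast$, in which $J^{\perp\perp}$ is canonically isometric to $J^{\ast\ast}$ (see \cite{HWW}). Since $Y \ci J$ is finite-codimensional in $X$, it is also finite-codimensional in $J$, and its second annihilator $Y^{\perp\perp}$ (taken in $X^{\ast\ast}$) is contained in $J^{\perp\perp}$; moreover, because $J^\perp$ annihilates $J^{\perp\perp}$, the relative weak$^\ast$-topology that $X^{\ast\ast}$ induces on $J^{\perp\perp}$ coincides with its intrinsic weak$^\ast$-topology, so $Y^{\perp\perp}$ is precisely the second annihilator of $Y$ computed inside $J^{\ast\ast}$. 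In other words, the pair $Y \ci J$ sits inside $X^{\ast\ast} = J^{\ast\ast} \oplus_\iy (J^\perp)^\ast$ with $Y^{\perp\perp}$ lying entirely in the first summand.

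The argument then proceeds in three moves. First, I would invoke the bidual characterisation of strong proximinality for finite-codimensional subspaces: for a finite-codimensional subspace $Z$ of a Banach space $W$, $Z$ is strongly proximinal in $W$ exactly when $Z^{\perp\perp}$ is strongly proximinal in $W^{\ast\ast}$; applied with $W = J$, the hypothesis that $Y$ is strongly proximinal in $J$ gives that $Y^{\perp\perp}$ is strongly proximinal in $J^{\ast\ast}$. Second, since $Y^{\perp\perp}$ is contained in the first factor of $X^{\ast\ast} = J^{\ast\ast} \oplus_\iy (J^\perp)^\ast$, its strong proximinality is unaffected by the second factor — this is the elementary $\ell_\iy$-direct-sum stability of strong proximinality (folklore; it is also a degenerate instance of the stability results of Section~\ref{S3}, the second summand carrying the trivial subspace $\{0\}$) — so $Y^{\perp\perp}$ is strongly proximinal in $X^{\ast\ast}$. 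Third, applying the same characterisation now with $W = X$, and using that $Y$ is finite-codimensional in $X$, one concludes that $Y$ is strongly proximinal in $X$.

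The main obstacle is the first move: having at hand, with its hypotheses correctly verified, the characterisation of strong proximinality of a finite-codimensional subspace through its second annihilator. The delicate direction is the passage back from $W^{\ast\ast}$ to $W$: using the identity $d_W(w,Z) = d_{W^{\ast\ast}}(w,Z^{\perp\perp})$, every $\delta$-best approximation of $w \in W$ from $Z$ is automatically a $\delta$-best approximation from $Z^{\perp\perp}$ in $W^{\ast\ast}$, but pushing it to within $\e$ of the genuine set $P^W_Z(w)$ (rather than merely of $P^{W^{\ast\ast}}_{Z^{\perp\perp}}(w)$) uses the finite-codimensionality of $Z$ essentially, together with some proximinality bookkeeping. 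If one prefers to bypass this, there is a more hands-on route: exploit that $J$, being an $M$-ideal, satisfies the $n$-ball properties to establish the distance formula $d(x,Y) = \max\{d(x,J),\ \inf_{z \in P_J(x)} d(z,Y)\}$ together with attainment of the infimum, and then, given $y \in Y$ with $\|x-y\|$ close to $d(x,Y)$, correct $y$ by a two-ball argument to a point of $J$ close to $P_J(x)$ before feeding it into the strong proximinality of $Y$ in $J$; in that version the technical heart is instead the control of the set-valued metric projection $P_J$, which is where the full $M$-ideal structure (and not merely proximinality of $J$) enters.
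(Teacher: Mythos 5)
First, a point of reference: the paper does not prove this statement at all --- it is imported verbatim from \cite[Theorem~3.13]{CT}, where the argument runs through the Godefroy--Indumathi characterisation of strong proximinality of finite co-dimensional subspaces (strong subdifferentiability of the norm of $X^{\ast}$ at the points of $Y^{\perp}$, plus a norm-attainment condition), fed through the $\ell_1$-decomposition $X^{\ast}=J^{\perp}\oplus_1 J^{\ast}$ that the $M$-ideal hypothesis provides. Your bidual route is genuinely different, but it has a real gap at exactly the point you flag. The ``bidual characterisation'' in your first move --- $Z$ strongly proximinal in $W$ if and only if $Z^{\perp\perp}$ is strongly proximinal in $W^{\ast\ast}$, for $Z$ finite co-dimensional --- is not a quotable theorem, and your argument uses \emph{both} implications essentially: move 1 needs the forward one and move 3 the backward one. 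The forward implication is already problematic because move 2 forces you to control strong proximinality of $Y^{\perp\perp}$ in $J^{\ast\ast}$ at points of $J^{\ast\ast}\setminus J$: a point $x\in X$ decomposes in $X^{\ast\ast}=J^{\perp\perp}\oplus_{\infty}(J^{\perp})^{\ast}$ as $(\pi x,\,x-\pi x)$ with $\pi x\in J^{\ast\ast}$ generally outside $J$ (indeed outside $X$), and the hypothesis ``$Y$ strongly proximinal in $J$'' says nothing about such points. The backward implication hides the same difficulty in reverse: best approximants to $x\in X$ from $Y^{\perp\perp}$ live in $X^{\ast\ast}$ and need not be anywhere near $X$, so not even plain proximinality of $Y$ in $X$ follows formally from proximinality of $Y^{\perp\perp}$ in $X^{\ast\ast}$. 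Note also that the $M$-summand picture you land in after passing to biduals is precisely the \emph{easy} case of the theorem (\cite[Proposition~3.2]{CT}, generalised here as Proposition~\ref{T4.6.2}); the entire difficulty of the statement is the passage from $M$-summand to $M$-ideal, and your reduction shifts all of that difficulty into the unproved transfer lemma rather than resolving it.

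The fallback route is also broken as written: the distance formula $d(x,Y)=\max\{d(x,J),\ \inf_{z\in P_J(x)}d(z,Y)\}$ fails already for $M$-summands, hence for $M$-ideals. Take $X=\mathbb{R}^2$ with the maximum norm, $J=\mathbb{R}\times\{0\}$ (an $M$-summand), $Y=\{0\}\subseteq J$, and $x=(3,1)$. Then $d(x,J)=1$ and $P_J(x)=\{(t,0)\colon 2\le t\le 4\}$, so $\inf_{z\in P_J(x)}d(z,Y)=2$ and the right-hand side equals $2$, while $d(x,Y)=\|x\|=3$. The correct $M$-summand formula is $d(x,Y)=\max\{d(x,J),\,d(\pi x,Y)\}$ with $\pi$ the $M$-projection, i.e.\ it singles out one particular element of $P_J(x)$; for a genuine $M$-ideal there is no such projection available inside $X$, which is again where the actual work lies. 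My suggestion is to abandon the bidual detour and prove the theorem directly via the SSD-point characterisation of strong proximinality of finite co-dimensional subspaces, using that $Y^{\perp}=J^{\perp}\oplus_1 Y^{\perp_J}$ and that strong subdifferentiability passes through $\ell_1$-sums.
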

				
				We pose a similar problem for property-$(P_1)$ as follows$\colon$
				
				\begin{Q}\label{Q2}
					Let $X$ be a Banach space, $J$ be an $M$-ideal in $X$ and $Y$ be a finite co-dimensional subspace of $X$ such that $Y \ci J$. If the triplets $(J,Y,\mc{CB}(J))$ (or $(J,Y,\mc{K}(J))$ or $(J,Y,\mc{F}(J))$ respectively) and $(X,J,\mc{CB}(X))$ (or $(X,J,\mc{K}(X))$ or $(X,J,\mc{F}(X))$ respectively) have property-$(P_1)$, then does $(X,Y,\mc{CB}(X))$ (or $(X,Y,\mc{K}(X))$ or $(X,Y,\mc{F}(X))$ respectively) have property-$(P_1)$? 
				\end{Q}	   
				We do not know the answer to Question~\ref{Q2} in its entirety. Nevertheless, we provide positive results for the following two particular cases in Section~\ref{sec5}$\colon$ We prove that for a Banach space $X$, an $M$-summand $Y$ in $X$ and a subspace $Z$ of $Y$, if $(Y,Z,\mc{CB}(Y))$ has property-$(P_1)$ then so does $(X,Z,\mc{CB}(X))$. Further, let us recall that a Banach space $X$ is said to be an \textit{$L_1$-predual space} if $X^{\ast}$ is isometric to an $L_1(\mu)$ space for some positive measure $\mu$. We refer to \cite{LL} for a detailed study on $L_1$-preduals. We prove that for an $L_1$-predual space $X$, an $M$-ideal $J$ in $X$ and a finite co-dimensional subspace $Y$ of $X$ such that $Y \ci J \ci X$, if $Y$ is strongly proximinal in $J$, then $(X,Y,\mc{K}(X))$ has property-$(P_1)$.
				
				\section{Property-$(P_1)$ in vector-valued continuous function spaces}\label{S2}
				
				In this section, we discuss property-$(P_1)$ of the closed unit ball of the space $C_{b}(T,X)$ whenever $T$ is a topological space and $X$ is a uniformly convex Banach space.
				To this end, we first recall the following characterization of a uniformly convex Banach space. 
				
				\begin{lem}[{\cite[Lemma~1]{Amir}}]\label{L1}
					Let $X$ be a Banach space. Then $X$ is uniformly convex if and only if for each $\e>0$, there exists $\de^{\prime}(\e)>0$ such that if $x,y \in X$ and $\Phi \in X^{\ast}$ such that $\|x\|=\|y\| = 1 = \|\Phi\| = \Phi(y)$ and $\Phi(x)> 1- \de^{\prime}(\e)$ then $\|x-y\|<\e$. We can choose $\de^{\prime}(\e) \leq \frac{\e}{2}$. 
				\end{lem}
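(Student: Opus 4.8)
My plan is to prove the two implications directly from the modulus-of-convexity description of uniform convexity: $X$ is uniformly convex precisely when for every $\e>0$ there is $\de(\e)>0$ such that $\|u\|=\|v\|=1$ and $\|u+v\|>2-2\de(\e)$ force $\|u-v\|<\e$.

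For the forward implication I would fix $\e>0$ (assuming $\e\le 2$, since otherwise $\|x-y\|\le 2<\e$ is automatic), take $\de(\e)$ as above, and simply set $\de^{\prime}(\e)\coloneqq\min\{\de(\e),\e/2\}$; this is $\le\e/2$, which already disposes of the last clause. If $\|x\|=\|y\|=1=\|\Phi\|=\Phi(y)$ and $\Phi(x)>1-\de^{\prime}(\e)$, then, since $\|\Phi\|=1$,
\[
\|x+y\| \ge \Phi(x)+\Phi(y) > \bigl(1-\de^{\prime}(\e)\bigr)+1 \ge 2-\de(\e) > 2-2\de(\e),
\]
and uniform convexity yields $\|x-y\|<\e$.

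For the converse I would fix $\e>0$ and, with $\de^{\prime}\coloneqq\de^{\prime}(\e/2)$ supplied by the hypothesis, set $\de\coloneqq\tfrac12\min\{\de^{\prime},1\}$ and check that this is a modulus of convexity for $\e$. Given $\|u\|=\|v\|=1$ with $\|u+v\|>2-2\de$ (so $u+v\neq 0$), I would put $z\coloneqq(u+v)/\|u+v\|$ and pick, by the Hahn--Banach theorem, $\Phi\in X^{\ast}$ with $\|\Phi\|=1$ and $\Phi(z)=1$; then $\Phi(u+v)=\|u+v\|>2-2\de$, and since $\Phi(u),\Phi(v)\le 1$ this forces $\Phi(u)>1-2\de\ge 1-\de^{\prime}$ and $\Phi(v)>1-\de^{\prime}$. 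Applying the hypothesis to the triples $(u,z,\Phi)$ and $(v,z,\Phi)$ gives $\|u-z\|<\e/2$ and $\|v-z\|<\e/2$, hence $\|u-v\|<\e$; since $\e$ was arbitrary, $X$ is uniformly convex.

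The forward implication is routine and the bound $\de^{\prime}(\e)\le\e/2$ costs nothing. The one point that needs care is the converse: because the hypothesis insists that $\Phi$ \emph{attain} its norm at $y$, one cannot feed it a norming functional of $u+v$ together with the pair $(u,v)$. The fix is to introduce the normalized midpoint $z=(u+v)/\|u+v\|$ as a common reference unit vector, so that a norming functional of $z$ is exact at $z$ while remaining nearly norming at both $u$ and $v$; the hypothesis then applies to $(u,z)$ and to $(v,z)$ separately, and the triangle inequality does the rest.
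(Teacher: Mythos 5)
Your proof is correct: the forward direction via $\|x+y\|\ge\Phi(x)+\Phi(y)$ and the converse via a norming functional of the normalized midpoint $z=(u+v)/\|u+v\|$ (which is exactly the right fix for the requirement that $\Phi$ attain its norm at the second vector) both go through, and the choice $\de^{\prime}(\e)=\min\{\de(\e),\e/2\}$ handles the final clause. The paper itself offers no proof of this lemma --- it is quoted directly from Amir's paper --- so there is nothing to compare against; your argument is the standard one and stands on its own.
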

				
				We also recall the following estimation. 
				\begin{lem}[{\cite[Lemma~4.1]{T1}}]\label{L2}
					Let $X$ be a Banach space and $V \in \mc{CV}(X)$. Then for each $A,B \in \mc{CB}(X)$ and $v \in V$, $\vert r(v,A) - r(v,B)\vert\leq d_{H}(A,B)$ and $\vert \textrm{rad}_{V}(A) - \textrm{rad}_{V}(B) \vert \leq d_{H}(A,B)$.
				\end{lem}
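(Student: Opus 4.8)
The plan is to get the first inequality directly from the triangle inequality and the definition of the Hausdorff metric, and then deduce the second one by passing to infima over $v \in V$.

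First I would fix $A,B \in \mc{CB}(X)$ and $v \in V$ and take an arbitrary $t > d_{H}(A,B)$, so that $A \ci B + tB_{X}$ and $B \ci A + tB_{X}$. Given $a \in A$, I choose $b \in B$ with $\|a-b\| \leq t$; then $\|v-a\| \leq \|v-b\| + \|b-a\| \leq r(v,B) + t$. Taking the supremum over $a \in A$ gives $r(v,A) \leq r(v,B) + t$, and the symmetric argument (using $B \ci A + tB_{X}$) gives $r(v,B) \leq r(v,A) + t$. Hence $|r(v,A) - r(v,B)| \leq t$ for every $t > d_{H}(A,B)$, and letting $t \downarrow d_{H}(A,B)$ yields $|r(v,A) - r(v,B)| \leq d_{H}(A,B)$.

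For the second inequality, I would observe that the bound just established gives $r(v,A) \leq r(v,B) + d_{H}(A,B)$ for every $v \in V$. Taking the infimum over $v \in V$ on both sides gives $\textrm{rad}_{V}(A) \leq \textrm{rad}_{V}(B) + d_{H}(A,B)$, since the constant $d_{H}(A,B)$ pulls out of the infimum; interchanging the roles of $A$ and $B$ gives the reverse estimate, so $|\textrm{rad}_{V}(A) - \textrm{rad}_{V}(B)| \leq d_{H}(A,B)$.

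There is no genuine obstacle here; the only point that needs a little care is that the infimum defining $d_{H}(A,B)$ need not be attained, which is why the argument is run for an arbitrary $t > d_{H}(A,B)$ and only then passed to the limit. I would also note in passing that the first inequality holds for every $v \in X$, not merely for $v \in V$; the hypothesis $v \in V$ is only used when taking the infimum in the second part.
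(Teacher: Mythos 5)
Your proof is correct; the paper itself states this lemma without proof, citing it from \cite[Lemma~4.1]{T1}, and your argument is the standard one: triangle inequality plus the definition of $d_H$ for the first estimate, then passing to the infimum over $v\in V$ for the second. The care you take with the (possibly unattained) infimum defining $d_H(A,B)$, and the observation that the first inequality holds for all $v\in X$, are both accurate.
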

				
				The following result is obtained through some modifications in the proof of \cite[Theorem~2]{Amir}. For the sake of thoroughness, we present the modifications in its proof here.
				\begin{thm}\label{P9}
					Let $T$ be a topological space and $X$ be a uniformly convex Banach space. Then the triplet  $(C_{b}(T,X), B_{C_{b}(T,X)}, \mc{CB}(C_{b}(T,X)))$ satisfies property-$(P_1)$ and the map $\textrm{cent}_{B_{C_{b}(T,X)}}(.)$ is uniformly Hausdorff metric continuous on $\mc{CB}(C_{b}(T,X))$.
				\end{thm}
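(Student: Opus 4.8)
The plan is to rerun Amir's proof of Theorem~\ref{T8} in the restricted setting, with the closed unit ball $B_X$ taking over (pointwise) the role played there by $X$ and with $B_{C_{b}(T,X)}$ in place of $C_{b}(T,X)$, while keeping every modulus explicit and independent of the target set; property-$(P_1)$ and the continuity assertion then fall out of the estimates. Throughout write $Z = C_{b}(T,X)$ and $V = B_Z$, and for $F \in \mc{CB}(Z)$ and $t \in T$ put $A_t := \{f(t) : f \in F\}$, a bounded subset of $X$.

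Two reductions carry the argument. First: since $B_X$ is a \emph{closed convex} subset of the uniformly convex space $X$, for every bounded $A \ci X$ the convex $1$-Lipschitz function $x \mapsto r(x,A)$ attains its infimum over $B_X$ at a \emph{unique} point $c_{B_X}(A)$, and the standard midpoint argument based on Lemma~\ref{L1} --- in which the only property of $B_X$ used is convexity --- yields a nondecreasing function $\eta$ with $\eta(0+)=0$, depending only on the modulus of convexity of $X$ and on a norm bound for $A$, such that $x \in B_X$ and $r(x,A) \le \textrm{rad}_{B_X}(A)+\de$ force $\|x-c_{B_X}(A)\| \le \eta(\de)$. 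Second: for $g \in V$ one has $r(g,F) = \sup_{t \in T} r(g(t),A_t)$, whence $\textrm{rad}_{V}(F) \ge \sup_{t}\textrm{rad}_{B_X}(A_t)$, and on the ``critical'' set $\{t : \textrm{rad}_{B_X}(A_t) \ge \textrm{rad}_{V}(F)-\de\}$ any $g \in V$ with $r(g,F) \le \textrm{rad}_{V}(F)+\de$ is a $2\de$-near restricted center of $A_t$ in $B_X$ (note $g(t) \in B_X$), hence $\|g(t)-c_{B_X}(A_t)\| \le \eta(2\de)$ there.

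With these two facts I would follow Amir's construction of the center \emph{function}: starting from near-centers $g_n \in V$ with $r(g_n,F) \downarrow \textrm{rad}_{V}(F)$, assemble a point of $V$ by his pointwise limiting/gluing procedure, using the second reduction to pin the values on the critical set near $c_{B_X}(A_t)$ and the uniform convexity of $X$ to secure continuity of the limit off it; as each $g_n$ is $B_X$-valued, so is the resulting $g$, so $g \in V$ and $r(g,F) = \textrm{rad}_{V}(F)$, giving \trcp~for $(V,\mc{CB}(Z))$. Re-running the same estimates with $\de$ held fixed shows that every $g \in V$ with $r(g,F) \le \textrm{rad}_{V}(F)+\de$ lies within $\omega(\de)$, in $\|\cdot\|_{\iy}$, of $\textrm{cent}_{V}(F)$, where $\omega$ (built from $\eta$ and the modulus of convexity of $X$) depends only on $\de$ and a norm bound for $F$, \emph{not} on $F$ itself; picking $\de$ with $\omega(\de)<\e$ gives $\textrm{cent}_{V}(F,\de) \ci \textrm{cent}_{V}(F)+\e B_Z$, i.e. property-$(P_1)$ for $(Z,V,\mc{CB}(Z))$. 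Feeding this $F$-independent modulus into Lemma~\ref{L2} in the routine way (if $d_H(A,B)<\de$ then every element of $\textrm{cent}_{V}(A)$ is a $2\de$-near restricted center of $B$, and symmetrically) then promotes the upper semi-continuity of Theorem~\ref{Rem2} to the Hausdorff metric continuity of $\textrm{cent}_{V}(\cdot)$ on $\mc{CB}(Z)$, uniformly, exactly as Theorem~\ref{T8} is deduced from its unconstrained estimates.

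I expect the existence step to be the main obstacle: one must check that Amir's construction of a \emph{continuous}, bounded center function goes through verbatim once the pointwise values are confined to $B_X$. The uniform-convexity and midpoint inequalities only ever use convexity of $B_X$ (and of $B_Z$), so they survive untouched; what needs genuine care is that the selection/limit used to make the center continuous can be performed inside $B_X$, together with the bookkeeping that keeps the relevant near-centers under the control of $\eta$ on the critical set --- precisely where the inequality $\textrm{rad}_{V}(F) \ge \sup_t \textrm{rad}_{B_X}(A_t)$ and the $F$-independence of $\eta$ are used in tandem.
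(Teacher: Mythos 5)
Your high-level plan --- rerun Amir's proof of Theorem~\ref{T8} with $B_X$ and $B_{C_{b}(T,X)}$ substituted for $X$ and $C_{b}(T,X)$, observe that the construction only ever uses convexity of the constraint set, and read off both property-$(P_1)$ and uniform Hausdorff continuity from the fact that the resulting modulus depends only on $\e$ and a bound on the restricted radius --- is exactly the paper's approach (the paper additionally rescales to reduce to $\textrm{rad}_{B_{C_{b}(T,X)}}(B)=1$ and invokes a scaling result from \cite{T1}).

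However, the mechanism you propose for the step you yourself flag as ``the main obstacle'' is not the one that works, and it is not how the paper resolves it. Neither Amir nor the paper introduces the pointwise restricted centers $c_{B_X}(A_t)$ or a critical set. Instead, given a near-center $f_0$ with $r(f_0,B)\leq 1+\de^{\prime}(\e)$ and a better near-center $g$ with $r(g,B)\leq 1+\de^{\prime}(\e/2)$, one glues them by the pointwise convex combination $f_1(t)=f_0(t)+\al(t)\bigl(g(t)-f_0(t)\bigr)$ with $\al(t)=\min\{1,\,2\e/\|g(t)-f_0(t)\|\}$: continuity of $f_1$ is automatic, membership in $B_{C_{b}(T,X)}$ is automatic because $\al(t)\in[0,1]$ and $f_0(t),g(t)\in B_X$ (this is the \emph{only} modification of Amir's argument the restricted setting requires), and Lemma~\ref{L1} yields $r(f_1,B)\leq 1+\de^{\prime}(\e/2)$ together with $\|f_1-f_0\|\leq 2\e$; iterating gives a Cauchy sequence whose limit is an exact center within $4\e$ of $f_0$. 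Your route through the unique pointwise centers runs into a real difficulty if pursued literally: $t\mapsto c_{B_X}(A_t)$ need not be continuous, and off the critical set you have no control whatsoever on the values of a near-center, so ``pinning'' values on the critical set and appealing to uniform convexity elsewhere does not produce an element of $C_{b}(T,X)$. Since you ultimately defer to ``Amir's construction,'' the argument can be completed, but the two reductions you lead with are a detour rather than the engine of the proof, and the existence step you leave open is precisely where the explicit $\al(t)$-gluing must be supplied.
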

				\begin{proof}
					Let $B \in \mc{CB}(C_{b}(T,X))$. We define $R = \emph{rad}_{B_{C_{b}(T,X)}}(B)$. Let us assume $R =1$ and fix $\e>0$. Then we obtain $\de^{\prime}(\e)>0$ satisfying the condition in Lemma~\ref{L1}. Now there exists $f_0 \in B_{C_{b}(T,X)}$ such that $r(f_0,B) \leq 1+ \de^{\prime}(\e)$. We claim that there exists $f_1 \in B_{C_{b}(T,X)}$ such that $r(f_1,B) \leq 1+ \de^\prime (\e/2)$ and $\|f_0 - f_1\| \leq 2 \e$. 
					Indeed, there exists $g \in B_{C_{b}(T,X)}$ such that $r(g,B) \leq 1+ \de^\prime (\e/2)$. We now define $\al \colon T \ra [0,1]$ and $f_1 \colon T \ra X$ as follows$\colon$ for each $t \in T$,
					\begin{equation*}
						\al(t)=
						\begin{dcases}
							1, & \mbox{if }\|g(t) - f_0(t)\| \leq 2\e; \\
							\frac{2\e}{\|g(t) - f_0(t)\|}, & \mbox{if }\|g(t) - f_0(t)\| > 2 \e. 
						\end{dcases}
					\end{equation*}
					and  \[f_1 (t)=  f_0(t) + \al(t) ( g(t) - f_0(t)).\] Clearly, $f_1 \in B_{C_{b}(T,X)}$ and $\|f_1 - f_0\| \leq 2 \e$. For each $b \in B$, we apply the same arguments as in the proof of \cite[Theorem~2]{Amir} to show that for each $t \in T$, 
					\begin{equation}\label{Eq1}
						\|f_1(t) - b(t)\| \leq 1 + \de^\prime (\e/2)
					\end{equation} 
					and hence $r(f_1,B) \leq 1 + \de^{\prime}(\e/2)$.
					
					We now proceed inductively to find a sequence $\{f_n\} \ci B_{C_{b}(T,X)}$ such that for each $n=1,2,\ldots$, $\|f_{n+1}-f_n\| \leq 2 \frac{\e}{2^{n}}$ and $r(f_{n+1}, B) \leq 1+ \de^\prime ( \e/2^{n+1})$. Since $\{f_n\}$ is Cauchy, there exists $f \in B_{C_{b}(T,X)}$ such that $\lim_{n \ra \iy} f_n = f$. Hence $\|f-f_0\| \leq 4\e$ and $r(f,B) \leq \lim_{n \ra \iy} r(f_n, B) \leq 1$. Thus $f \in \emph{cent}_{B_{C_{b}(T,X)}}(B)$.
					It also follows that $\emph{cent}_{B_{C_{b}(T,X)}}(B, \de^{\prime}(\e)) \ci\emph{cent}_{B_{C_{b}(T,X)}}(B) + 4\e B_{X}$. Hence $(C_{b}(T,X), B_{C_{b}(T,X)}, \{B\})$ has property-$(P_1)$.
					
					Now assume $0<R \neq 1$. Then $\inf_{f \in B_{C_{b}(T,X)}} r\left(\frac{f}{R}, \frac{1}{R}B\right)=1$. In the argument above, if $f_0, g$ are chosen in $\frac{1}{R} B_{C_{b}(T,X)}$ then $f_1 \in \frac{1}{R} B_{C_{b}(T,X)}$. Hence replacing $B_{C_{b}(T,X)}$ and $B$ by $\frac{1}{R} B_{C_{b}(T,X)}$ and $\frac{1}{R} B$ respectively in the argument above, we can conclude that $(C_{b}(T,X), \frac{1}{R} B_{C_{b}(T,X)}, \{\frac{1}{R} B\})$ has property-$(P_1)$. Thus it follows from \cite[Proposition~2.3~(ii)]{T1} that $(C_{b}(T,X), B_{C_{b}(T,X)}, \{B\})$ has property-$(P_1)$.
					
					In order to show that the map $\emph{cent}_{B_{C_{b}(T,X)}}(.)$ is uniformly Hausdorff continuous on $\mc{CB}(C_{b}(T,X))$, let us fix $\e, R>0$. We now obtain a $\de^{\prime}(\e)> 0$ satisfying the condition in Lemma~\ref{L1}. Choose $0 < \de< \frac{R  \de^{\prime}(\e)}{2}$. Let $A,B \in \mc{CB}(C_{b}(T,X))$ such that $\emph{rad}_{B_{C_{b}(T,X)}}(B), \emph{rad}_{B_{C_{b}(T,X)}}(A) < R$ and $d_{H}(B,A)< \de$. Then by Lemma~\ref{L2}, $\vert \emph{rad}_{B_{C_{b}(T,X)}}(B) - \emph{rad}_{B_{C_{b}(T,X)}}(A) \vert <\de.$ Let $f \in \emph{cent}_{B_{C_{b}(T,X)}}(A)$. Thus \[r(f,B) < \emph{rad}_{B_{C_{b}(T,X)}}(A) + \de < \emph{rad}_{B_{C_{b}(T,X)}}(B) + 2\de < (1 + \de^{\prime}(\e)) R.\] Now using the arguments above, we obtain $f_0 \in \emph{cent}_{B_{C_{b}(T,X)}}(B)$ such that $\|f-f_0\| \leq 4\e R$. Similarly, $\emph{cent}_{B_{C_{b}(T,X)}}(B) \ci \emph{cent}_{B_{C_{b}(T,X)}}(A) + 4\e R B_{C_{b}(T,X)}.$ Thus $d_{H}(\emph{cent}_{B_{C_{b}(T,X)}}(B)),\emph{cent}_{B_{C_{b}(T,X)}}(A)) \leq 4 \e R$.
				\end{proof}
				
				\begin{cor}\label{C2.1}
					Let $S$ be a compact Hausdorff space and $\A$ be a closed linear subspace of $C(S)$ described as follows$\colon$\[\A = \{f \in C(S)\colon f(t_i) = \la_i f(s_i)\mbox{, for each }i \in I\},\] for some index $I$ and co-ordinates $(t_i,s_i,\la_i) \in S \times S \times \{-1,0,1\}$ for each $i\in I$. Then the triplet $(C(S), B_\A, \mc{CB}(C(S)))$ satisfies property-$(P_1)$ and the map $\textrm{cent}_{B_{\A}}(.)$ is uniformly Hausdorff metric continuous on $\mc{CB}(C(S))$.
				\end{cor}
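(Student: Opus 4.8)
The plan is to re-run the proof of Theorem~\ref{P9} with $T = S$ and $X = \mb{R}$ (which is uniformly convex), replacing the competing ball $B_{C_{b}(T,X)}$ there by $B_{\A}$ at every occurrence. Every step of that argument is structural and survives this substitution, save one: the induction in Theorem~\ref{P9} manufactures its approximating sequence by \emph{pasting} two admissible functions $f_{0}, g$ into a new one, $f_{1}(t) = f_{0}(t) + \al(t)\bigl(g(t) - f_{0}(t)\bigr)$, where $\al \colon S \to [0,1]$ is a continuous cut-off, and it is essential that $f_{1}$ again belong to the competing set. Hence the sole genuinely new point is to verify that $B_{\A}$ --- and, after rescaling, $\frac{1}{R}B_{\A}$ --- is stable under this pasting operation.

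So fix $f_{0}, g \in B_{\A}$ and let $\al, f_{1}$ be given by the same formulas as in the proof of Theorem~\ref{P9} (with $\|g(t) - f_{0}(t)\|$ now the absolute value $|g(t) - f_{0}(t)|$). Continuity of $\al$, and hence of $f_{1}$, holds exactly as there, so $f_{1} \in C(S)$, and since $f_{1}(t)$ is for each $t$ a convex combination of $f_{0}(t)$ and $g(t)$, $\|f_{1}\| \le 1$. For the relations defining $\A$: if $\la_{i} = 0$ then $f_{0}(t_{i}) = g(t_{i}) = 0$, so $f_{1}(t_{i}) = 0 = \la_{i}f_{1}(s_{i})$; and if $\la_{i} \in \{-1, 1\}$ then, using $f_{0}, g \in \A$,
\[ |g(t_{i}) - f_{0}(t_{i})| = |\la_{i}|\,|g(s_{i}) - f_{0}(s_{i})| = |g(s_{i}) - f_{0}(s_{i})|, \]
so that $\al(t_{i}) = \al(s_{i})$, whence
\[ f_{1}(t_{i}) = (1 - \al(t_{i}))f_{0}(t_{i}) + \al(t_{i})g(t_{i}) = \la_{i}\bigl((1 - \al(s_{i}))f_{0}(s_{i}) + \al(s_{i})g(s_{i})\bigr) = \la_{i}f_{1}(s_{i}). \]
Thus $f_{1} \in B_{\A}$; the identical computation with $\|f_{0}\|, \|g\| \le \frac{1}{R}$ shows $\frac{1}{R}B_{\A}$ is stable under pasting as well.

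Granting this, the remainder is automatic. For $B \in \mc{CB}(C(S))$, set $R = \emph{rad}_{B_{\A}}(B)$: if $R = 0$ then $B$ is a singleton contained in $B_{\A}$ and the conclusion is immediate, and if $R > 0$ one passes to $\frac{1}{R}B_{\A}$ and $\frac{1}{R}B$ --- reducing to $R = 1$ by \cite[Proposition~2.3~(ii)]{T1} --- and then runs the pasting iteration of Theorem~\ref{P9} verbatim (the estimates $\|f_{1} - f_{0}\| \le 2\e$ and $r(f_{1}, B) \le 1 + \de^{\prime}(\e/2)$, that is, the pointwise bound \eqref{Eq1}, go through word for word, using only the uniform convexity of $\mb{R}$ pointwise). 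This produces a Cauchy sequence in $B_{\A}$ whose limit lies in $\emph{cent}_{B_{\A}}(B)$ and is within $4\e$ of $f_{0}$; simultaneously it gives \trcp~for $(B_{\A}, \mc{CB}(C(S)))$ and the inclusion $\emph{cent}_{B_{\A}}(B, \de^{\prime}(\e)) \ci \emph{cent}_{B_{\A}}(B) + 4\e B_{C(S)}$, that is, property-$(P_1)$ with an $\e$-uniform modulus. The closing paragraph of the proof of Theorem~\ref{P9}, applied with $B_{C_{b}(T,X)}$ replaced by $B_{\A}$, then delivers the uniform Hausdorff metric continuity of $\emph{cent}_{B_{\A}}(.)$ on $\mc{CB}(C(S))$. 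The only place the hypothesis is actually used --- and so the crux of the proof --- is the identity $\al(t_{i}) = \al(s_{i})$, which is what makes the pasting respect the constraints $f(t_{i}) = \la_{i}f(s_{i})$, and which rests precisely on $|\la_{i}| \in \{0, 1\}$.
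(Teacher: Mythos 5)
Your proposal is correct and follows exactly the paper's route: the paper's proof of Corollary~\ref{C2.1} is precisely the observation that the pasting $f_1 = f_0 + \al(g-f_0)$ in the proof of Theorem~\ref{P9} stays in $B_\A$ when $f_0,g\in B_\A$, which it leaves as ``clear from the description of $\A$'' and which you verify in detail via the identity $\al(t_i)=\al(s_i)$ (valid because $|\la_i|\in\{0,1\}$). No gaps; you have simply made explicit the step the paper asserts.
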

				\begin{proof}
					In the proof of Theorem~\ref{P9}, if we choose $f_0$ and $g$ in $B_\A$ then clearly $\|f_1\|\leq 1$ and from the description of $\A$, $f_1 \in B_\A$. Hence the result follows.
				\end{proof}
			By using the representation given in \cite[Theorem~2.2]{T3} of closed linear subalgebras of $C(S)$ and applying Corollary~\ref{C2.1}, we obtain the following result.
			 
				\begin{cor}\label{C2}
					Let $S$ be a compact Hausdorff space and $\A$ be a closed linear subalgebra of $C(S)$. Then the triplet $(C(S), B_\A, \mc{CB}(C(S)))$ satisfies property-$(P_1)$ and the map $\textrm{cent}_{B_{\A}}(.)$ is uniformly Hausdorff metric continuous on $\mc{CB}(C(S))$.
				\end{cor}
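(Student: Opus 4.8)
My plan is to reduce the statement to Corollary~\ref{C2.1} by invoking a structural description of closed linear subalgebras of $C(S)$. First I would recall (this is the content of \cite[Theorem~2.2]{T3}) that any closed, not necessarily unital, subalgebra $\A$ of $C(S)$ is completely determined by two pieces of data: its zero set $E \coloneqq \{s \in S : f(s) = 0 \text{ for all } f \in \A\}$, which is a closed subset of $S$, together with the closed equivalence relation on $S \sm E$ defined by $s \sim t$ iff $f(s) = f(t)$ for every $f \in \A$; explicitly,
\[
\A = \{f \in C(S) : f(s) = 0 \text{ for } s \in E,\ f(s) = f(t) \text{ whenever } s \sim t\}.
\]
(For a subalgebra the coefficient $-1$ permitted in Corollary~\ref{C2.1} never genuinely occurs: if $f(s) = -f(t)$ for all $f \in \A$, then applying this to $f^2 \in \A$ gives $f(s)^2 = -f(t)^2$, so $f(s) = f(t) = 0$ and the pair is absorbed into $E$; thus only $\la_i \in \{0,1\}$ is needed.) Indexing the points of $E$ by triples $(s,s,0)$ and the relation $\sim$ by triples $(s,t,1)$, we see that $\A$ is precisely of the form hypothesised in Corollary~\ref{C2.1}.

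Once this representation is available, the proof is complete: Corollary~\ref{C2.1} applies verbatim to $\A$ and delivers both that $(C(S), B_\A, \mc{CB}(C(S)))$ has property-$(P_1)$ and that $\textrm{cent}_{B_\A}(.)$ is uniformly Hausdorff metric continuous on $\mc{CB}(C(S))$.

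I expect the only real obstacle to be the structural representation itself --- i.e. showing that an arbitrary closed subalgebra can be written as a family of pointwise linear constraints with coefficients in $\{-1,0,1\}$ --- which is a Stone--Weierstrass-type argument and is supplied by \cite[Theorem~2.2]{T3}; it also records that the resulting constraint family is closed, so that $\A$ really is a closed subspace. No further analysis is required, since the substantive work (the iterative truncation $f_1(t) = f_0(t) + \al(t)(g(t) - f_0(t))$ producing a near-center that stays in the ball, and the passage to the limit) was already carried out in Theorem~\ref{P9} and transmitted through Corollary~\ref{C2.1}. The point worth keeping in mind is why the constraints $h(t_i) = \la_i h(s_i)$ with $\la_i \in \{-1,0,1\}$ survive that truncation: because the functions are real-valued, $|g(t_i) - f_0(t_i)| = |\la_i|\,|g(s_i) - f_0(s_i)|$, and since $|\la_i| \in \{0,1\}$ the truncation weight $\al$ takes equal values at $t_i$ and $s_i$ (both equal to $0$ when $\la_i = 0$), whence $f_1(t_i) = \la_i f_1(s_i)$ and $f_1 \in \A$. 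This is exactly the algebraic feature, passed on through the representation, that lets Corollary~\ref{C2.1} --- and hence the present corollary --- go through.
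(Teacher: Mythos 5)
Your proposal is correct and follows exactly the paper's route: invoke the representation of closed linear subalgebras of $C(S)$ from \cite[Theorem~2.2]{T3} as a family of pointwise constraints $f(t_i)=\la_i f(s_i)$ and then apply Corollary~\ref{C2.1}. The extra observations (that $\la_i=-1$ cannot genuinely occur for a subalgebra, and that the truncation preserves the constraints) are correct but not needed beyond what the cited representation and Corollary~\ref{C2.1} already supply.
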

				
				For a compact Hausdorff space $S$, it is known that $C(S)^{\ast \ast}$ is a $C(\Omega)$ space, for some compact Hausdorff space $\Omega$ (see \cite{Lacey}) and $C(S)$ is a closed linear subalgebra of $C(\Omega)$ under the canonical embedding (see \cite{Seever}). Therefore, as a direct consequence of Corollary~\ref{C2}, we have
				
				\begin{cor}\label{C3}
					Let $S$ be a compact Hausdorff space and $\A$ be a closed linear subalgebra of $C(S)$. Then the triplet $(C(S)^{\ast \ast}, B_\A, \mc{CB}(C(S)^{\ast \ast}))$ satisfies property-$(P_1)$ and the map $\textrm{cent}_{B_{\A}}(.)$ is uniformly Hausdorff metric continuous on $\mc{CB}(C(S)^{\ast \ast})$.
				\end{cor}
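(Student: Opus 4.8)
The plan is to deduce Corollary~\ref{C3} from Corollary~\ref{C2} by exhibiting $C(S)^{\ast\ast}$ as a $C(\Omega)$ space in which $C(S)$, hence $\A$, sits as a closed linear subalgebra. Concretely, first I would invoke the representation theorem for biduals of $C(S)$ spaces (Kakutani/Grothendieck, as in \cite{Lacey}): for a compact Hausdorff space $S$, the bidual $C(S)^{\ast\ast}$ is isometrically isomorphic, as a Banach space, to $C(\Omega)$ for some compact Hausdorff $\Omega$ (the hyperstonean space associated with the measures on $S$), and this isomorphism is in fact an algebra isomorphism when $C(S)^{\ast\ast}$ is given its Arens product. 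Next I would record that under the canonical isometric embedding $\iota\colon C(S) \hookrightarrow C(S)^{\ast\ast} \cong C(\Omega)$, the image $\iota(C(S))$ is a \emph{closed linear subalgebra} of $C(\Omega)$: this is the content of \cite{Seever} (the Arens multiplication on $C(S)^{\ast\ast}$ restricts on $C(S)$ to its original pointwise multiplication, and $\iota$ is a unital isometric algebra homomorphism, so its range is a subalgebra, and it is norm-closed because $\iota$ is an isometry onto its range).

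With that in place, the argument is essentially a one-line composition. Since $\A$ is a closed linear subalgebra of $C(S)$ and $\iota$ is an isometric algebra embedding with closed range, $\iota(\A)$ is a closed linear subalgebra of $\iota(C(S))$, and since a closed subalgebra of a closed subalgebra of $C(\Omega)$ is again a closed subalgebra of $C(\Omega)$, we conclude $\iota(\A)$ is a closed linear subalgebra of $C(\Omega)$. Moreover $B_{\iota(\A)} = \iota(B_\A)$ because $\iota$ is an isometry, so under the identification $C(S)^{\ast\ast} \cong C(\Omega)$ the closed unit ball $B_\A$ of $\A$ (viewed inside $C(S)^{\ast\ast}$) corresponds exactly to the closed unit ball of the closed subalgebra $\iota(\A)$ of $C(\Omega)$. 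Property-$(P_1)$ for the triplet $(C(\Omega), B_{\iota(\A)}, \mc{CB}(C(\Omega)))$ and uniform Hausdorff metric continuity of $\textrm{cent}_{B_{\iota(\A)}}(.)$ on $\mc{CB}(C(\Omega))$ then follow immediately from Corollary~\ref{C2} applied with the compact Hausdorff space $\Omega$ and the closed subalgebra $\iota(\A)$. Transporting back along the isometric isomorphism $C(\Omega)\cong C(S)^{\ast\ast}$ — which preserves restricted Chebyshev radii, restricted Chebyshev-center sets, $\delta$-approximate center sets, and the Hausdorff metric, being an isometry — yields exactly the assertion of Corollary~\ref{C3}.

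The only genuine content beyond citation-chasing is making sure the two structural facts are used correctly: (i) that the Banach-space isomorphism $C(S)^{\ast\ast}\cong C(\Omega)$ can be taken to carry the Arens product to pointwise multiplication, so that "$\A$ is a subalgebra of $C(S)$" really does translate into "its image is a subalgebra of $C(\Omega)$", and (ii) that $C(S)$ embeds as a \emph{subalgebra}, not merely a subspace, of its bidual. Both are classical and are precisely the references \cite{Lacey} and \cite{Seever} cited in the paragraph preceding the statement, so I would simply cite them rather than reprove them. I anticipate no real obstacle; the proof is the two-sentence reduction already sketched in the excerpt, and the write-up mainly needs to state the identification and invoke Corollary~\ref{C2}.

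\begin{proof}
	By \cite{Lacey}, $C(S)^{\ast\ast}$ is isometrically isomorphic to $C(\Omega)$ for some compact Hausdorff space $\Omega$, and by \cite{Seever}, under the canonical isometric embedding $\iota\colon C(S)\ra C(S)^{\ast\ast}\cong C(\Omega)$ the image $\iota(C(S))$ is a closed linear subalgebra of $C(\Omega)$. Since $\A$ is a closed linear subalgebra of $C(S)$ and $\iota$ is an isometric algebra isomorphism onto its range, $\iota(\A)$ is a closed linear subalgebra of $\iota(C(S))$, and hence of $C(\Omega)$. Because $\iota$ is an isometry, $B_{\iota(\A)}=\iota(B_\A)$, so under the identification $C(S)^{\ast\ast}\cong C(\Omega)$ the set $B_\A$ corresponds to the closed unit ball of the closed linear subalgebra $\iota(\A)$ of $C(\Omega)$. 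Applying Corollary~\ref{C2} to the compact Hausdorff space $\Omega$ and the closed linear subalgebra $\iota(\A)$, we obtain that the triplet $(C(\Omega), B_{\iota(\A)}, \mc{CB}(C(\Omega)))$ has property-$(P_1)$ and that $\textrm{cent}_{B_{\iota(\A)}}(.)$ is uniformly Hausdorff metric continuous on $\mc{CB}(C(\Omega))$. Since an isometric isomorphism of Banach spaces preserves restricted Chebyshev radii, restricted Chebyshev-center sets, the sets $\emph{cent}_{(\cdot)}(\cdot,\de)$, and the Hausdorff metric, transporting these conclusions along $C(\Omega)\cong C(S)^{\ast\ast}$ shows that $(C(S)^{\ast\ast}, B_\A, \mc{CB}(C(S)^{\ast\ast}))$ has property-$(P_1)$ and that $\textrm{cent}_{B_{\A}}(.)$ is uniformly Hausdorff metric continuous on $\mc{CB}(C(S)^{\ast\ast})$.
\end{proof}
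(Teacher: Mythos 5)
Your proof is correct and follows essentially the same route as the paper, which derives Corollary~\ref{C3} directly from Corollary~\ref{C2} using exactly the two classical facts you cite: that $C(S)^{\ast\ast}$ is a $C(\Omega)$ space (\cite{Lacey}) and that $C(S)$ sits inside it as a closed linear subalgebra under the canonical embedding (\cite{Seever}). Your write-up merely makes explicit the transitivity of ``closed subalgebra'' and the invariance of centers under isometric isomorphism, which the paper leaves implicit.
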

				
				The triplet $(\ell_{\iy}, B_{c_0},\mc{K}(\ell_{\iy}))$ has property-$(P_1)$, and it follows from the well-known fact that $c_0$ is an $M$-ideal in $\ell_{\iy}$ and \cite[Theorem~3.2]{T1}. The next result follows from the fact that $c_0$ is a subalgebra in $\ell_{\iy} \cong C(\be \mathbb{N})$ (here $\be \mathbb{N}$ is the Stone-\v{C}ech compactification of the natural numbers $\mathbb{N}$) and Corollary~\ref{C2}. 
				\begin{cor}\label{C1}
					The triplet $(\ell_{\iy}, B_{c_0}, \mc{CB}(\ell_{\iy}))$ has property-$(P_1)$ and the map $\textrm{cent}_{B_{c_0}}(.)$ is uniformly Hausdorff metric continuous on $\mc{CB}(\ell_{\iy})$.
				\end{cor}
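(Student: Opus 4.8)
The plan is to recognize $(\ell_\iy, B_{c_0}, \mc{CB}(\ell_\iy))$ as a special instance of Corollary~\ref{C2}. Since $\ell_\iy$ is, up to isometric isomorphism, a space $C(S)$ for a compact Hausdorff $S$, and since $c_0$ sits inside it as a closed linear subalgebra, Corollary~\ref{C2} should apply verbatim and then be transported back along the isometry.

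Concretely, I would first invoke the canonical identification $\ell_\iy \cong C(\be\mb{N})$, where $\be\mb{N}$ is the Stone--\v{C}ech compactification of $\mb{N}$ with its discrete topology: every bounded real sequence $x=(x_n)$ extends uniquely to a continuous function $\hat x$ on $\be\mb{N}$, and $x\mapsto\hat x$ is a surjective linear isometry which is moreover multiplicative, i.e.\ $\widehat{xy}=\hat x\,\hat y$ for all $x,y\in\ell_\iy$; in particular $B_{\ell_\iy}$ corresponds to $B_{C(\be\mb{N})}$. Next I would check that the image $\A$ of $c_0$ under this isomorphism is a closed linear subalgebra of $C(\be\mb{N})$: indeed $c_0$ is a closed linear subspace of $\ell_\iy$, and the pointwise product of two null sequences is again a null sequence, so $c_0$ is closed under multiplication (equivalently, $c_0$ is a closed, two-sided, non-unital ideal of $\ell_\iy$); multiplicativity of $x\mapsto\hat x$ then makes $\A$ a closed subalgebra of $C(\be\mb{N})$.

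Finally, I would apply Corollary~\ref{C2} with $S=\be\mb{N}$ and this $\A$, yielding that $(C(\be\mb{N}), B_\A, \mc{CB}(C(\be\mb{N})))$ has property-$(P_1)$ and that $\textrm{cent}_{B_\A}(\cdot)$ is uniformly Hausdorff metric continuous on $\mc{CB}(C(\be\mb{N}))$. Transporting these conclusions back through the isometry $x\mapsto\hat x$ — which carries $\mc{CB}(C(\be\mb{N}))$ bijectively onto $\mc{CB}(\ell_\iy)$ and preserves $r(v,B)$, $\textrm{rad}_V(B)$, the sets $\textrm{cent}_V(B,\de)$ and $\textrm{cent}_V(B)$, and the Hausdorff metric $d_H$ — gives the asserted statement for $(\ell_\iy, B_{c_0}, \mc{CB}(\ell_\iy))$. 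There is no real obstacle here: the only point requiring attention is the routine verification that $\ell_\iy\cong C(\be\mb{N})$ is an isometric \emph{algebra} isomorphism taking $c_0$ onto a closed subalgebra, after which the result is a mechanical transfer of structure through the isometry together with a direct appeal to Corollary~\ref{C2}.
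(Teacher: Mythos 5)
Your proposal is correct and follows exactly the paper's own route: the paper derives Corollary~\ref{C1} from the identification $\ell_{\iy}\cong C(\be\mathbb{N})$ together with the observation that $c_0$ is a closed subalgebra, and then applies Corollary~\ref{C2}. You have simply filled in the routine verifications (multiplicativity of the Gelfand-type embedding and the transfer of all proximity quantities through the isometry) that the paper leaves implicit.
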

				
				\section{Stability of property-$(P_1)$ in $\ell_{\iy}$-direct sums}\label{S3}
				
				We first establish some notations which are used in the present and subsequent sections. Let $X_1$ and $X_2$ be two Banach spaces. Then the $\ell_{\iy}$-direct sum of $X_{1}$ and $X_{2}$, \[X\coloneqq X_{1} \oplus_{\iy} X_{2} = \{(x_1,x_2) \in X_1 \times X_2 \colon x_1 \in X_1\mbox{ and }x_2 \in X_2\},\] is a Banach space equipped with the maximum norm. For $i\in\{1,2\}$, if $V_i \in \mc{CV}(X_i)$ then $V_1 \times V_2 \in \mc{CV}(X)$.
				For each $B \in \mc{CB}(X)$, let
				\begin{equation}
					\begin{split}
						&B(1) = \{b_1 \in X_1\colon \mbox{there exists }b \in B\mbox{ and }b_2 \in X_2\mbox{ such that }b=(b_1,b_2)\}\\\mbox{and }&B(2) = \{b_2 \in X_2\colon \mbox{there exists }b \in B\mbox{ and }b_1 \in X_1\mbox{ such that }b=(b_1,b_2)\}.
					\end{split}
				\end{equation}
			   Clearly, for each $i \in \{1,2\}$, $B(i) \in \mc{CB}(X_{i})$ and $B \ci B(1) \times B(2)$. 
				Moreover, for each $B \in \mc{CB}(X)$ and $i\in \{1,2\}$, let $r_{i}(B) = \emph{rad}_{V_{i}}(B(i)).$
				
				The following result gives a formula for the restricted Chebyshev radius of a closed bounded subset of an $\ell_{\iy}$-direct sum.
				\begin{prop}\label{Prop2}
					For each $i \in \{1,2\}$, let $X_i$ be a Banach space and $V_{i} \in \mc{CV}(X_i)$. Let $X = X_{1} \oplus_{\iy} X_2$, $V = V_{1} \times V_2$ and $B \in \mc{CB}(X)$. Then
					$\textrm{rad}_{V}(B) = \max \{r_{1}(B), r_{2}(B)\}$ and for each $v =(v_1,v_2) \in V$, $r(v,B) = \max \{r(v_1,B(1)),r(v_2,B(2))\}$.
				\end{prop}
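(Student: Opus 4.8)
The plan is to first establish the pointwise identity $r(v,B)=\max\{r(v_1,B(1)),\,r(v_2,B(2))\}$ for every $v=(v_1,v_2)\in V$, and then to deduce the radius formula from it by passing to infima, using crucially that in $V=V_1\times V_2$ the two coordinates may be optimized independently. Throughout I would use the facts, already recorded just before the statement, that $B(i)\in\mc{CB}(X_i)$ for $i\in\{1,2\}$ and that $B\ci B(1)\times B(2)$, so that all the radii in sight are finite real numbers and no emptiness issues arise.

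For the pointwise identity, fix $v=(v_1,v_2)\in V$. Since the norm on $X$ is the maximum norm, $\|v-b\|=\max\{\|v_1-b_1\|,\,\|v_2-b_2\|\}$ for every $b=(b_1,b_2)\in B$. Because $B\ci B(1)\times B(2)$, each $b_i$ lies in $B(i)$, hence $\|v_i-b_i\|\le r(v_i,B(i))$; taking the supremum over $b\in B$ gives $r(v,B)\le\max\{r(v_1,B(1)),\,r(v_2,B(2))\}$. Conversely, for any $b_1\in B(1)$ the definition of $B(1)$ furnishes $b_2\in X_2$ with $b=(b_1,b_2)\in B$, so $r(v,B)\ge\|v-b\|\ge\|v_1-b_1\|$; taking the supremum over $b_1\in B(1)$ yields $r(v,B)\ge r(v_1,B(1))$, and symmetrically $r(v,B)\ge r(v_2,B(2))$. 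Combining the two inequalities proves the pointwise formula.

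For the radius formula, the lower bound is immediate: for every $(v_1,v_2)\in V$ and each $i\in\{1,2\}$, the pointwise formula gives $r((v_1,v_2),B)\ge r(v_i,B(i))\ge\textrm{rad}_{V_i}(B(i))=r_i(B)$, whence $r((v_1,v_2),B)\ge\max\{r_1(B),\,r_2(B)\}$, and taking the infimum over $V$ gives $\textrm{rad}_V(B)\ge\max\{r_1(B),\,r_2(B)\}$. For the reverse inequality, fix $\e>0$ and, for $i=1,2$, pick $v_i\in V_i$ with $r(v_i,B(i))<r_i(B)+\e$; then $v\coloneqq(v_1,v_2)\in V$ and, by the pointwise formula, $r(v,B)=\max\{r(v_1,B(1)),\,r(v_2,B(2))\}<\max\{r_1(B),\,r_2(B)\}+\e$, so $\textrm{rad}_V(B)\le\max\{r_1(B),\,r_2(B)\}+\e$; letting $\e\to0$ finishes the argument.

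I do not expect any genuine obstacle in this proof; it is essentially a bookkeeping exercise with suprema and infima. The one point that deserves care is not to assume that the infima defining the restricted Chebyshev radii $r_i(B)=\textrm{rad}_{V_i}(B(i))$ are attained — this is why the reverse inequality for $\textrm{rad}_V(B)$ is handled by an $\e$-approximation of near-optimal centers in each factor rather than by gluing together two exact centers.
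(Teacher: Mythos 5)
Your proof is correct and follows essentially the same route as the paper's: both rely on the maximum-norm decomposition $\|v-b\|=\max\{\|v_1-b_1\|,\|v_2-b_2\|\}$, the observation that each $b_i\in B(i)$ extends to an element of $B$, and an $\e$-approximation by near-optimal centers in each factor for the reverse radius inequality. The only difference is presentational — you prove the pointwise identity first and derive the radius formula from it, whereas the paper argues the radius formula directly and notes the pointwise identity as a byproduct.
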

				\begin{proof}
					Let $v=(v_1,v_2) \in V$ and for $i\in \{1,2\}$, $b_i \in B(i)$. Then there exists $b,b^{\prime} \in B$ and $b^{\prime}_{i} \in X_i$ for $i\in \{1,2\}$ such that $b = (b_1, b^{\prime}_{2})$ and $b^{\prime} = (b^{\prime}_{1}, b_{2})$. Thus $\|b_1 - v_1\| \leq \|b-v\| \leq r(v,B)$ and $\|b_2 - v_2\| \leq \|b^{\prime} - v\| \leq  r(v,B).$ It follows that for $i\in \{1,2\}$, $r(v_i, B(i)) \leq r(v,B)$. It is now easy to conclude that $\max\{r_{1}(B), r_{2}(B)\} \leq \emph{rad}_{V}(B).$
					
					Conversely, for each $\e>0$ and $i \in \{1,2\}$, there exists $v_i \in V_{i}$ such that $r(v_{i},B(i)) < r_{i}(B) + \e$. Let $v=(v_1,v_2) \in V$ and $b=(b_1,b_2) \in B$. Then for $i 
					\in \{1,2\}$, $b_i \in B(i)$. Hence 
					\begin{equation*}
						\begin{split}
							\|v-b\| = \max \{\|v_1 - b_1\|,\|v_2 - b_2\|\}< \max \{r_{1}(B),r_{2}(B)\}  + \e.
						\end{split}
					\end{equation*}
					It follows that $\emph{rad}_{V}(B) \leq  \max \{r_{1}(B),r_{2}(B)\}  + \e.$ Since $\e>0$ is arbitrary, we obtain the desired equality.
					
					It follows from the arguments above that for each $v =(v_1,v_2) \in V$, $r(v,B) = \max \{r(v_1,B(1)), r(v_2,B(2))\}$.
				\end{proof}
				
				We now establish the stability of \trcp~under the $\ell_{\iy}$-direct sum.
				\begin{prop}\label{Prop1}\label{Prop3}
					For each $i \in \{1,2\}$, let $X_i$ be a Banach space and $V_{i} \in \mc{CV}(X_i)$. Let $X = X_{1} \oplus_{\iy} X_2$ and $V = V_{1} \times V_2$. If for each $i \in \{1,2\}$, $(V_i, \mc{CB}(X_{i}))$ has \trcp~then $(V,\mc{CB}(X))$ has \trcp.
				\end{prop}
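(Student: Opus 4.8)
The plan is to reduce everything to the radius formula already established in Proposition~\ref{Prop2}. Fix $B \in \mc{CB}(X)$; the goal is simply to exhibit an element of $\textrm{cent}_{V}(B)$, since $B$ is arbitrary.

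First I would pass to the coordinate projections $B(1)$ and $B(2)$, which by the observation recorded just before Proposition~\ref{Prop2} belong to $\mc{CB}(X_1)$ and $\mc{CB}(X_2)$ respectively. Since $(V_i,\mc{CB}(X_i))$ has \trcp~for each $i \in \{1,2\}$, there exists $v_i \in V_i$ with $r(v_i,B(i)) = \textrm{rad}_{V_i}(B(i)) = r_i(B)$; that is, $v_i \in \textrm{cent}_{V_i}(B(i))$.

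Next I would set $v = (v_1,v_2) \in V$ and apply the second assertion of Proposition~\ref{Prop2}, which yields $r(v,B) = \max\{r(v_1,B(1)), r(v_2,B(2))\} = \max\{r_1(B), r_2(B)\}$. By the first assertion of the same proposition this last quantity equals $\textrm{rad}_{V}(B)$, so $r(v,B) = \textrm{rad}_{V}(B)$, whence $v \in \textrm{cent}_{V}(B)$ and in particular $\textrm{cent}_{V}(B) \neq \es$. Since $B \in \mc{CB}(X)$ was arbitrary, $(V,\mc{CB}(X))$ has \trcp.

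There is essentially no obstacle here: the entire content is carried by Proposition~\ref{Prop2}. The only point requiring a moment's care is that choosing restricted Chebyshev centers of the two coordinate sets $B(1)$ and $B(2)$ independently still produces a point of $V$ whose radius with respect to $B$ itself (rather than with respect to the possibly larger set $B(1) \times B(2)$) is minimal — but this is exactly what the identity $r(v,B) = \max\{r(v_1,B(1)), r(v_2,B(2))\}$ from Proposition~\ref{Prop2} guarantees.
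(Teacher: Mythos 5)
Your proof is correct and follows essentially the same route as the paper: both select $v_i \in \textrm{cent}_{V_i}(B(i))$ using the hypothesis and then invoke Proposition~\ref{Prop2} to conclude $r((v_1,v_2),B) = \textrm{rad}_V(B)$. Your write-up simply spells out the intermediate equalities that the paper leaves implicit.
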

				\begin{proof}
					Let $B \in \mc{CB}(X)$. By our assumption, for $i\in \{1,2\}$, let $v_{i} \in \emph{cent}_{V_{i}}(B(i))$. Then it follows from Proposition~\ref{Prop2} that $\emph{rad}_{V}(B) = r((v_1,v_2), B)$.
				\end{proof}

				\begin{rem}\label{rem1}
					For each $i\in \{1,2\}$, let $X_i$ be a Banach space and $V_{i} \in \mc{CV}(X_i)$. For each $i \in \{1,2\}$, suppose $(V_i,\mc{CB}(X_i))$ has \trcp. Let $X = X_1 \oplus_{\iy} X_2$ and $V = V_1 \times V_2$. It is easy to verify the following facts.
					\blr
					\item For each $B \in \mc{CB}(X)$, 
					\begin{equation*}
						\emph{cent}_{V}(B)=
						\begin{dcases}
							\emph{cent}_{V_1}(B(1)) \times \emph{cent}_{V_{2}}(B(2)), & \mbox{if }r_{1}(B) = r_2(B); \\
							\bigcap_{b_{1} \in B(1)} B_{X_1}[b_{1}, r_{2}(B)] \cap V_1 \times \emph{cent}_{V_{2}}(B(2)), & \mbox{if }r_{1}(B) < r_{2}(B);\\ \emph{cent}_{V_{1}}(B(1)) \times \bigcap_{b_{2} \in B(2)} B_{X_2}[b_{2}, r_{1}(B)] \cap V_2, & \mbox{if }r_{2}(B) < r_{1}(B). 
						\end{dcases}
					\end{equation*}
					We note that in each case above, \[\emph{cent}_{V}(B) \supseteq \emph{cent}_{V_{1}}(B(1)) \times \emph{cent}_{V_{2}}(B(2)).\]
					\item For each $B,A \in \mc{CB}(X)$, \[\max \{d_{H}(B(1),A(1)),d_{H}(B(2),A(2))\} \leq d_{H}(B, A).\]
					\el
				\end{rem}
				
				Next, we prove a technical result. It is crucial to this study and is a generalization of \cite[Fact~3.2]{VI}.
				\begin{lem}\label{L4.6.1}
					Let $X$ be a Banach space, $V \in \mc{CV}(X)$ and $\msc{F} \ci \mc{CB}(X)$ such that $(V,\msc{F})$ has \trcp. Let $F \in \msc{F}$ and $\al > \textrm{rad}_{V}(F)$. Then for each $\e>0$, there exists $\de>0$ such that for each $F^\prime \in \msc{F}$ with $d_{H}(F,F^\prime) <\de$ and scalar $\be$ such that $\vert \al - \be \vert < \de$, we have \[d_{H}\left(\bigcap_{z \in F} B[z,\al] \cap V, \bigcap_{z^\prime \in F^\prime}B[z^\prime, \be] \cap V\right) <\e.\]
				\end{lem}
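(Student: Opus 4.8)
The plan is to rewrite the set in question as the sublevel set $C(F,\al) \coloneqq \{v \in V \colon r(v,F) \le \al\} = \bigcap_{z\in F} B[z,\al]\cap V$, and likewise $C(F',\be) = \{v \in V\colon r(v,F')\le\be\}$. Since $\al > \textrm{rad}_{V}(F)$ and $(V,\msc{F})$ has \trcp, fix $v_0 \in \textrm{cent}_{V}(F)$, so that $r(v_0,F) = \textrm{rad}_{V}(F) < \al$; thus $v_0 \in C(F,\al)$ is a point with a positive amount of ``slack'', $\eta \coloneqq \al - \textrm{rad}_{V}(F) > 0$. Both required Hausdorff containments will be produced by one device: given a point $v$ of one sublevel set, the segment point $v_\la \coloneqq (1-\la)v + \la v_0$ lies in $V$ (convexity) and satisfies $r(v_\la, A) \le (1-\la)\,r(v,A) + \la\,r(v_0,A)$ for any bounded $A$ (since $w\mapsto\|v_\la - w\|$ is convex), so pulling $v$ a little towards $v_0$ drops its radius below the required threshold.

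Concretely, for $C(F,\al) \ci C(F',\be) + \e B_X$, take $v \in C(F,\al)$; Lemma~\ref{L2} gives $r(v,F') < \al + \de$ and $r(v_0,F') < \textrm{rad}_{V}(F) + \de$, hence $r(v_\la,F') < (1-\la)(\al+\de) + \la(\textrm{rad}_{V}(F)+\de) = \al + \de - \la\eta$. For $\la_0 \coloneqq 2\de/\eta \in (0,1)$ (valid once $\de < \eta/2$) this is $< \al - \de < \be$, so $v_{\la_0} \in C(F',\be)$; and choosing any $z \in F$ yields the \emph{uniform} bound $\|v - v_0\| \le r(v,F) + r(v_0,F) \le \al + \textrm{rad}_{V}(F)$, so $\|v - v_{\la_0}\| = \la_0\|v-v_0\| \le (2\de/\eta)(\al + \textrm{rad}_{V}(F))$, which is $<\e$ for $\de$ small. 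The reverse inclusion $C(F',\be) \ci C(F,\al) + \e B_X$ is symmetric: for $v \in C(F',\be)$, Lemma~\ref{L2} gives $r(v,F) < \be + \de < \al + 2\de$, and sliding towards the same $v_0 \in \textrm{cent}_{V}(F)$ with $\mu_0 \coloneqq 2\de/\eta$ puts $v_{\mu_0}$ in $C(F,\al)$ at distance $\le (2\de/\eta)(\al + \textrm{rad}_{V}(F) + 1)$ from $v$, provided $2\de<1$.

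It therefore suffices to fix $\de>0$ with $\de < \eta/2$, $\de < 1/2$, and $\de < \frac{\e\,\eta}{2(\al + \textrm{rad}_{V}(F) + 1)}$, and to check that both sublevel sets are nonempty, closed and bounded: $v_0 \in C(F,\al)$; $C(F,\al)$ is bounded since $\|v\| \le \|z\| + \al$ for any $z \in F$; and by Lemma~\ref{L2} together with $\de < \eta/2$ one has $\textrm{rad}_{V}(F') < \textrm{rad}_{V}(F) + \de < \be$, so \trcp~for $(V,\msc{F})$ supplies a point of $\textrm{cent}_{V}(F') \ci C(F',\be)$ (the same estimates give closedness and boundedness of $C(F',\be)$). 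The two inclusions, with the uniform displacement bounds both $<\e$, then yield $d_{H}(C(F,\al),C(F',\be)) < \e$. The only genuinely delicate point---and the heart of the argument---is that the displacement $\|v-v_0\|$ must be bounded \emph{uniformly} over all $v$ in the relevant sublevel set; this is exactly where the \trcp-center $v_0$ and the strict inequality $\al > \textrm{rad}_{V}(F)$ are indispensable, the rest being routine bookkeeping of constants. (When $V$ is a subspace of $c_0$ and $F$ is a singleton this recovers \cite[Fact~3.2]{VI}.)
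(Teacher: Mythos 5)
Your proposal is correct and follows essentially the same route as the paper: both proofs exploit the slack $\al-\textrm{rad}_{V}(F)>0$ to pull an arbitrary point of one ball-intersection a controlled distance along a segment toward a restricted Chebyshev center, with $\de$ chosen proportional to $\e$ times that slack. The only cosmetic difference is that you anchor both inclusions at a single $v_0\in\textrm{cent}_{V}(F)$ (transferring radius estimates to $F^\prime$ via Lemma~\ref{L2}), whereas the paper anchors the first inclusion at a center of $F^\prime$; this changes nothing essential.
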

				\begin{proof}
					Let us define $R_F = \emph{rad}_{V}(F)$ and fix $\e>0$. Further, define $2 \ga = \al - R_F$, $L = \al + R_F  +2$ and $\de = \min \{1, \frac{\ga}{2}, \frac{\ga \e}{2L}\}$.
					
					Let $F^\prime \in \msc{F}$ be such that $d_{H}(F,F^\prime)<\de$ and $\be$ be a scalar such that $\vert \al -\be \vert < \de$. For simplicity, let $R_{F^{\prime}} = \emph{rad}_{V}(F^\prime)$. Then from Lemma~\ref{L2}, $\vert R_{F} - R_{F^\prime} \vert < \de$. Moreover, 
					\[\be - R_{F^{\prime}} = \be- \al  + R_F - R_{F^{\prime}} + \al - R_{F} > 2 \ga - 2 \de \geq 2 \ga - \ga = \ga.\]
					
					Now, let $v \in \bigcap_{z \in F} B[z,\al] \cap V$ and $z^\prime \in F^\prime$. Then there exists $z \in F$ such that $\|z-z^\prime\| < \de$. Thus $\|z^{\prime} - v\|\leq \|z^{\prime}-z\| + \|z-v\| < \de + \al < \be + 2\de.$
					Hence for each $z^\prime \in F^\prime$, $\|z^\prime - v\| <\be + 2\de$. Let $v_0 \in \emph{cent}_{V}(F^{\prime})$. Define $v^\prime = \la v + (1 - \la)v_0$ where $\la = \frac{\be - R_{F^\prime}}{\be - R_{F^\prime} + 2 \de}$. Then $v^\prime \in V$ and for each $z^\prime \in F^\prime$, 
					\begin{equation*}
						\begin{split}
							\|z^\prime - v^\prime\| & \leq \la \|z^\prime -v\| + (1-\la) \|z^\prime - v_0\| \\&< \la (\be + 2\de) + (1-\la) R_{F^\prime}= \la (\be - R_{F^\prime} + 2 \de) + R_{F^\prime} = \be.
						\end{split}
					\end{equation*}
					It follows that $v^\prime \in \bigcap_{z^\prime \in F^\prime} B[z^\prime, \be] \cap V$.
					Further, let $z^\prime \in F^\prime$. Since $d_{H}(F,F^\prime)< \de$, there exists $z \in F$ such that $\|z-z^\prime\|<\de$. Now,
					\begin{align}
						\begin{split}
							\|v-v^\prime\| &= (1-\la) \|v-v_0\|=\frac{2 \de}{\be - R_{F^\prime} + 2 \de} \|v-v_0\|\\&<\frac{2 \de}{\ga} ( \|v-z\| + \|z-z^\prime\| + \|z^\prime -v_0\|) \\&< \frac{2 \de}{\ga}(\al + \de + r(v_0,F^\prime)) =\frac{2\de}{\ga} (\al + \de + R_{F^\prime}) \\&< \frac{2 \de }{\ga} ( \al + R_{F} + 2 \de)<\frac{ 2 \de L}{\ga} \leq \e.
						\end{split}
					\end{align} 
					Therefore, $v^\prime \in \bigcap_{z^\prime \in F^\prime} B[z^\prime, \be] \cap V$ such that $\|v-v^\prime\| < \e$.
					
					Similarly, for each $w^\prime \in \bigcap_{z^\prime \in F^\prime} B[z^\prime, \be] \cap V$, we obtain $w \in \bigcap_{z \in F} B[z, \al] \cap V$ such that $\|w - w^\prime\|< \e$. This completes the proof.
				\end{proof}	
			
				The next result proves the stability of property-$(P_1)$ under $\ell_{\iy}$-direct sums.
				\begin{thm}\label{T4}
					For $i \in \{1,2\}$, let $X_i$ be a Banach space and $V_{i} \in \mc{CV}(X_i)$. Let $X = X_{1} \oplus_{\iy} X_2$ and $V = V_{1} \times V_2$. If for $i \in \{1,2\}$, $(X_i, V_i, \mc{CB}(X_{i}))$ has property-$(P_1)$ then $(X,V,\mc{CB}(X))$ has property-$(P_1)$.
				\end{thm}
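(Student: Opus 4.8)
The plan is to compute $\emph{cent}_{V}(F,\de)$ and $\emph{cent}_{V}(F)$ coordinatewise, using Proposition~\ref{Prop2} for the radius, Remark~\ref{rem1}~(i) for the centres, and then to play off the three cases $r_1(F)=r_2(F)$, $r_1(F)<r_2(F)$, $r_2(F)<r_1(F)$ against the hypotheses. Note first that $(V,\mc{CB}(X))$ has \trcp~by Proposition~\ref{Prop3}, so the statement is meaningful. Fix $F\in\mc{CB}(X)$ and $\e>0$, and abbreviate $r_i=r_i(F)=\emph{rad}_{V_i}(F(i))$. Since the norm on $X$ is the maximum norm, Proposition~\ref{Prop2} gives $r((v_1,v_2),F)=\max\{r(v_1,F(1)),r(v_2,F(2))\}$ and $\emph{rad}_{V}(F)=\max\{r_1,r_2\}$; consequently, for every $\de>0$,
\[(v_1,v_2)\in\emph{cent}_{V}(F,\de)\iff r(v_1,F(1))\le\max\{r_1,r_2\}+\de\quad\text{and}\quad r(v_2,F(2))\le\max\{r_1,r_2\}+\de.\]

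If $r_1=r_2$, this says precisely $\emph{cent}_{V}(F,\de)=\emph{cent}_{V_1}(F(1),\de)\times\emph{cent}_{V_2}(F(2),\de)$, while Remark~\ref{rem1}~(i) gives $\emph{cent}_{V}(F)=\emph{cent}_{V_1}(F(1))\times\emph{cent}_{V_2}(F(2))$. Using property-$(P_1)$ of $(X_i,V_i,\mc{CB}(X_i))$, choose $\de_i>0$ with $\emph{cent}_{V_i}(F(i),\de_i)\ci\emph{cent}_{V_i}(F(i))+\e B_{X_i}$; then $\de=\min\{\de_1,\de_2\}$ works, again because a vector of $X$ is $\e$-close to $V$ iff each of its two coordinates is $\e$-close to the corresponding $V_i$.

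If $r_1<r_2$, then $\emph{rad}_{V}(F)=r_2$ and the equivalence above becomes
\[\emph{cent}_{V}(F,\de)=\Big(\bigcap_{b_1\in F(1)}B_{X_1}[b_1,r_2+\de]\cap V_1\Big)\times\emph{cent}_{V_2}(F(2),\de),\]
while Remark~\ref{rem1}~(i) gives $\emph{cent}_{V}(F)=\Big(\bigcap_{b_1\in F(1)}B_{X_1}[b_1,r_2]\cap V_1\Big)\times\emph{cent}_{V_2}(F(2))$. I would then control the two factors separately. For the second, property-$(P_1)$ of $(X_2,V_2,\mc{CB}(X_2))$ produces $\de^{\prime\prime}>0$ with $\emph{cent}_{V_2}(F(2),\de^{\prime\prime})\ci\emph{cent}_{V_2}(F(2))+\e B_{X_2}$. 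For the first, I would invoke Lemma~\ref{L4.6.1} with $(X_1,V_1,\mc{CB}(X_1))$ in place of $(X,V,\msc{F})$, with $F(1)$ in place of $F$, and with $\al=r_2$, which is legitimate since $r_2>r_1=\emph{rad}_{V_1}(F(1))$; applying it with $F^\prime=F(1)$ and $\be=r_2+\de$ yields $\de^\prime>0$ such that $\de<\de^\prime$ forces $d_{H}\Big(\bigcap_{b_1\in F(1)}B_{X_1}[b_1,r_2]\cap V_1,\ \bigcap_{b_1\in F(1)}B_{X_1}[b_1,r_2+\de]\cap V_1\Big)<\e$, so in particular the first factor of $\emph{cent}_{V}(F,\de)$ is contained in the first factor of $\emph{cent}_{V}(F)$ enlarged by $\e B_{X_1}$. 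Taking $\de=\min\{\de^\prime,\de^{\prime\prime}\}$ and recombining coordinates gives $\emph{cent}_{V}(F,\de)\ci\emph{cent}_{V}(F)+\e B_{X}$. The remaining case $r_2<r_1$ is symmetric under interchange of the two coordinates, and the proof is complete.

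The mechanical parts — the coordinatewise descriptions and the bookkeeping with the maximum norm — are routine given Proposition~\ref{Prop2} and Remark~\ref{rem1}. The one genuine ingredient is Lemma~\ref{L4.6.1} (the promised generalization of \cite[Fact~3.2]{VI}): it is precisely the tool needed to control the ``ball-intersection'' factor $\bigcap_{b_1\in F(1)}B_{X_1}[b_1,\cdot\,]\cap V_1$ appearing when $r_1\neq r_2$, a factor which is not itself of the form $\emph{cent}_{V_1}(\cdot)$ and hence cannot be treated directly via property-$(P_1)$ of the factor $(X_1,V_1,\mc{CB}(X_1))$. I therefore expect the unbalanced cases $r_1<r_2$, $r_2<r_1$ — specifically the correct invocation of Lemma~\ref{L4.6.1} — to be the only delicate point.
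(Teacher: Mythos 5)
Your proposal is correct and follows essentially the same route as the paper's proof: establish \trcp~via the stability proposition, split into the cases $r_1=r_2$ and $r_1\neq r_2$ using the coordinatewise descriptions of $\textrm{cent}_{V}(F)$ and $\textrm{cent}_{V}(F,\de)$, handle the balanced case directly from property-$(P_1)$ of the factors, and control the ball-intersection factor in the unbalanced case by applying Lemma~\ref{L4.6.1} with $F^\prime=F(1)$, $\al=r_{2}$ and $\be=r_{2}+\de$. You have also correctly identified that the invocation of Lemma~\ref{L4.6.1} is the only nontrivial step, which is exactly where the paper's argument places its weight.
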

				\begin{proof}
					Assume that for $i \in \{1,2\}$, $(X_i, V_i, \mc{CB}(X_{i}))$ has property-$(P_1)$. By Proposition~\ref{Prop1}, $(V,\mc{CB}(X))$ has \trcp. Let $B \in \mc{CB}(X)$ and $\e>0$. Then there exists $\de>0$ such that for $i \in \{1,2\}$,
					\begin{equation}\label{Eqn4}
						\emph{cent}_{V_{i}}(B(i), \de) \ci \emph{cent}_{V_{i}}(B(i)) + \e B_{X_i}.
					\end{equation}
					
					{\sc Case $1\colon$ }$r_{1}(B) = r_{2}(B)$
					
					By Remark~\ref{rem1} (i), $\emph{cent}_{V}(B) = \emph{cent}_{V_1}(B(1)) \times \emph{cent}_{V_2}(B(2))$. Similarly, for each $\ga>0$, we also have $\emph{cent}_{V}(B,\ga) = \emph{cent}_{V_1}(B(1),\ga) \times \emph{cent}_{V_2}(B(2),\ga)$. Therefore, it follows from $(\ref{Eqn4})$ that \[\emph{cent}_{V}(B,\de) \ci \emph{cent}_{V}(B) + \e B_{X}.\]
					
					{\sc Case $2\colon$ }$r_{1}(B) \neq r_{2}(B)$
					
					Without loss of generality, assume $r_{1}(B) < r_{2}(B)$, since the same arguments work for the reverse inequality. 
					By Remark~\ref{rem1} (i), $\emph{cent}_{V}(B) = \bigcap_{b_{1} \in B(1)} B_{X_1}[b_{1}, r_{2}(B)] \cap V_1 \times \emph{cent}_{V_{2}}(B(2))$. Similarly, for each $\ga>0$, \[\emph{cent}_{V}(B,\ga) = \bigcap_{b_{1} \in B(1)} B_{X_1}[b_{1}, r_{2}(B)+\ga] \cap V_1 \times  \emph{cent}_{V_2}(B(2),\ga).\]
					
					Let $\e>0$. Now replacing $X$, $V$ and $B$ by $X_1$, $V_1$ and $B(1)$ respectively in Lemma~\ref{L4.6.1}, we obtain $\de^{\prime}>0$ such that for each $A \in \mc{CB}(X)$ with $d_{H}(B,A)< 2 \de^{\prime}$ and a scalar $\be$ with $\vert \al -\be \vert <2 \de^{\prime}$, we have \[d_{H}\left(\bigcap_{b_1 \in B(1)} B_{X_1}[b_1, \al] \cap V_1,\bigcap_{a_1 \in A(1)} B_{X_1}[a_1, \be] \cap V_1\right) <\e.\]
					Choose $\de_0 = \min \{\de, \de^{\prime}\}$.
					Let us take $A= B$, $\al =r_{2}(B)$ and $\be = r_{2}(B) + \de_0$. Then we have 
					\begin{equation}\label{Eqn5}
						d_{H}\left(\bigcap_{b_1 \in B(1)} B_{X_1}[b_1, r_{2}(B)] \cap V_1,\bigcap_{b_1 \in B(1)} B_{X_1}[b_1, r_{2}(B) + \de_0] \cap V_1\right) <\e.
					\end{equation}
					
					Let $v=(v_1,v_2) \in \emph{cent}_{V}(B,\de_0)$ such that $v_1 \in \bigcap_{b_{1} \in B(1)} B_{X_1}[b_{1}, r_{2}(B)+\de_0] \cap V_1$ and $v_2 \in \emph{cent}_{V_2}(B(2),\de_0)$. Then by $(\ref{Eqn4})$ and $(\ref{Eqn5})$, there exists $w_1 \in \bigcap_{b_1 \in B(1)} B_{X_1}[b_1, r_{2}(B)] \cap V_1$ and $w_2 \in \emph{cent}_{V_2}(B(2))$ such that for each $i \in \{1,2\}$, $\|w_i - v_i\|\leq \e$. Hence $(w_1,w_2) \in \emph{cent}_{V}(B)$ and $\|v-(w_1,w_2)\| \leq \e$.
				\end{proof}
				
				One of the instances where the converses of Proposition~\ref{Prop3} and Theorem~\ref{T4} hold true is as follows:
				\begin{prop}
					For each $i \in \{1,2\}$, let $X_i$ be a Banach space and $Y_i$ be a closed non-trivial subspace of $X_i$. Let $X = X_{1} \oplus_{\iy} X_2$ and $Y = Y_{1} \oplus_{\iy} Y_2$. Then
					\blr
					\item If $(Y,\mc{CB}(X))$ has \trcp~then for each $i \in \{1,2\}$, $(Y_i,\mc{CB}(X_i))$ has \trcp.
					\item If $(X,Y,\mc{CB}(X))$ has property-$(P_1)$ then for each $i \in \{1,2\}$, $(X_i,Y_i,\mc{CB}(X_i))$ has property-$(P_1)$
					\el
				\end{prop}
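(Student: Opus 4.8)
The plan is to prove both parts by the same reduction: lift an arbitrary set in $\mc{CB}(X_1)$ (respectively $\mc{CB}(X_2)$) to $\mc{CB}(X)$ as cheaply as possible, padding it with the origin of the other factor, and then transfer the hypotheses back to $X_1$ (respectively $X_2$) via the radius formula of Proposition~\ref{Prop2}. By the evident symmetry it suffices to argue for the index $i=1$.

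So fix $B_1 \in \mc{CB}(X_1)$ and set $\widehat{B}_1 \coloneqq B_1 \times \{0\} \in \mc{CB}(X)$, noting $\widehat{B}_1(1) = B_1$ and $\widehat{B}_1(2) = \{0\}$. Since $0 \in Y_2$, we have $r(y_2,\{0\}) = \|y_2\|$ for every $y_2 \in Y_2$ and $\emph{rad}_{Y_2}(\{0\}) = 0$; hence, writing $R \coloneqq \emph{rad}_{Y_1}(B_1)$, Proposition~\ref{Prop2} gives $\emph{rad}_{Y}(\widehat{B}_1) = \max\{R,0\} = R$ and $r\bigl((y_1,y_2),\widehat{B}_1\bigr) = \max\{r(y_1,B_1),\|y_2\|\}$ for all $(y_1,y_2) \in Y$. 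From these two formulas I would read off, for every $\de \geq 0$,
\[
\emph{cent}_{Y}(\widehat{B}_1,\de) = \emph{cent}_{Y_1}(B_1,\de) \times \bigl(B_{X_2}[0,R+\de] \cap Y_2\bigr),
\]
and in particular $\emph{cent}_{Y}(\widehat{B}_1) = \emph{cent}_{Y_1}(B_1) \times \bigl(B_{X_2}[0,R]\cap Y_2\bigr)$ (consistent with Remark~\ref{rem1}~(i), since $r_2(\widehat{B}_1) = 0 \leq r_1(\widehat{B}_1)$). The second factor here always contains $0$, hence is non-empty.

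Granting this, part (i) is immediate: if $(Y,\mc{CB}(X))$ has \trcp, applying it to $\widehat{B}_1$ yields some $(w_1,w_2) \in \emph{cent}_{Y}(\widehat{B}_1)$, and the displayed splitting forces $w_1 \in \emph{cent}_{Y_1}(B_1) \neq \es$; as $B_1$ was arbitrary, $(Y_1,\mc{CB}(X_1))$ has \trcp. For part (ii), suppose $(X,Y,\mc{CB}(X))$ has property-$(P_1)$, so $(Y_1,\mc{CB}(X_1))$ has \trcp~by part (i). Fix $B_1 \in \mc{CB}(X_1)$ and $\e>0$, and let $\de>0$ be the constant that property-$(P_1)$ supplies for $\widehat{B}_1$ and $\e$, so $\emph{cent}_{Y}(\widehat{B}_1,\de) \ci \emph{cent}_{Y}(\widehat{B}_1) + \e B_X$. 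For $v_1 \in \emph{cent}_{Y_1}(B_1,\de)$ the splitting gives $(v_1,0) \in \emph{cent}_{Y}(\widehat{B}_1,\de)$, so there is $(w_1,w_2) \in \emph{cent}_{Y}(\widehat{B}_1)$ with $\max\{\|v_1-w_1\|,\|w_2\|\} \leq \e$; then $w_1 \in \emph{cent}_{Y_1}(B_1)$ and $\|v_1-w_1\| \leq \e$. Hence $\emph{cent}_{Y_1}(B_1,\de) \ci \emph{cent}_{Y_1}(B_1) + \e B_{X_1}$, i.e.\ $(X_1,Y_1,\mc{CB}(X_1))$ has property-$(P_1)$.

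I do not expect a genuine obstacle; the only step needing care is the computation in the second paragraph — verifying, from Proposition~\ref{Prop2}, that the $\de$-centre sets of the padded set $\widehat{B}_1$ factor as products, and noticing that the ``dummy'' factor $B_{X_2}[0,R]\cap Y_2$ is non-empty precisely because $0 \in Y_2$. It is this non-emptiness that lets the existence assertion in (i) descend to $X_1$, and the product splitting of the $\de$-neighbourhoods that lets the quantitative estimate in (ii) descend with the very same $\de$.
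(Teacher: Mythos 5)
Your proof is correct, and it follows the same basic strategy as the paper's: lift $B_1\in\mc{CB}(X_1)$ to a product set in $X$ and read the conclusion off the product structure of the (approximate) restricted-centre sets, with the same $\de$ descending in part (ii). The difference lies in the choice of lift. The paper pads $B_1$ with a set $B_2\in\mc{CB}(X_2)$ chosen so that $\emph{rad}_{Y_2}(B_2)=\emph{rad}_{Y_1}(B_1)$ (possible because $Y_2$ is non-trivial, e.g.\ $B_2=\{\pm Ry\}$ for a unit vector $y\in Y_2$, though this construction is not spelled out there), which places $B_1\times B_2$ in the equal-radii case of Remark~\ref{rem1}~(i), so that $\emph{cent}_{Y}(B,\de)$ splits as $\emph{cent}_{Y_1}(B_1,\de)\times\emph{cent}_{Y_2}(B_2,\de)$ and one projects onto the first factor. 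You pad with $\{0\}$ instead, landing in the case $r_2\leq r_1$, where the second factor of the centre set becomes the ball $B_{X_2}[0,R+\de]\cap Y_2$; your direct verification of the product formula for $\emph{cent}_{Y}(\widehat{B}_1,\de)$ from Proposition~\ref{Prop2} is correct, including the degenerate case $R=0$. Your version is marginally cleaner on two counts: it does not require manufacturing a set of prescribed restricted Chebyshev radius (indeed it never uses the non-triviality of $Y_2$), and the non-emptiness of the dummy factor is immediate since it contains $0$, whereas the paper's route formally invokes Remark~\ref{rem1}~(i) whose standing hypothesis (r.c.p.\ of each $(Y_i,\mc{CB}(X_i))$) is exactly what part (i) is trying to establish --- a citation that is easily repaired, since only the inclusion $\emph{cent}_{Y}(B)\ci\emph{cent}_{Y_1}(B_1)\times\emph{cent}_{Y_2}(B_2)$ is needed and that holds without r.c.p., but which you avoid altogether by computing the centre sets directly.
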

				\begin{proof}
					{$(i)\colon$} Without loss of generality, we only prove that $(Y_1, \mc(CB)(X_1))$ has \trcp. Let Let $B_1 \in \mc{CB}(X_1)$. We can choose $B_2 \in \mc{CB}(X_2)$ such that $\emph{rad}_{Y_2}(B_2)=\emph{rad}_{Y_1}(B_1)$. Let $B = B_1 \times B_2 \in \mc{CB}(X)$. Then $B(1)=B_1$ and $B(2) = B_2$ and hence $r_{1}(B) = r_{2}(B)$. By Remark~\ref{rem1} (i), $\emph{cent}_{Y}(B) = \emph{cent}_{Y_1}(B_1) \times \emph{cent}_{Y_2}(B_2)$. Thus by our assumption, it follows that $\emph{cent}_{Y_1}(B_1) \neq \es$.
					 
					{$(ii)\colon$} Without loss of generality, we only prove that $(X_1, Y_1, \mc(CB)(X_1))$ has property-$(P_1)$. By Proposition~\ref{Prop1}, $(Y_1,\mc{CB}(X_1))$ has \trcp. Let $B_1 \in \mc{CB}(X_1)$. We choose $B_2 \in \mc{CB}(X_2)$ such that $\emph{rad}_{Y_2}(B_2)=\emph{rad}_{Y_1}(B_1)$. Define $B = B_1 \times B_2 \in \mc{CB}(X)$. Then $B(1)=B_1$ and $B(2) = B_2$ and hence $r_{1}(B) = r_{2}(B)$. Let $\e>0$. Then there exists $\de>0$ such that \[\emph{cent}_{Y}(B,\de) \ci \emph{cent}_{Y}(B) + \e B_{X}.\]
					By Remark~\ref{rem1} (i), $\emph{cent}_{Y}(B) = \emph{cent}_{Y_1}(B_1) \times \emph{cent}_{Y_2}(B_2)$. Similarly, $\emph{cent}_{Y}(B,\de) = \emph{cent}_{Y_1}(B_1,\de) \times \emph{cent}_{Y_2}(B_2, \de)$. Let $y_1 \in \emph{cent}_{Y_1}(B_1,\de)$ and $y_2 \in \emph{cent}_{Y_2}(B_2, \de)$. Hence $(y_1,y_2) \in \emph{cent}_{Y}(B,\de)$. Then there exists $y_1^{\prime} \in \emph{cent}_{Y_1}(B_1)$ and $y_{2}^{\prime} \in \emph{cent}_{Y_2}(B_2)$ such that $\|(y_1,y_2) - (y_1^{\prime},y_{2}^{\prime})\| \leq \e$. Thus $\|y_1 - y_{1}^{\prime}\|\leq \e$.
				\end{proof}
				
				An application of the above stability results is the following result.
				\begin{prop}\label{T5}
					Let $Y$ be a proximinal finite co-dimensional subspace of $c_0$. Then the triplet $(\ell_{\iy}, B_{Y}, \mc{CB}(\ell_{\iy}))$ has property-$(P_1)$.
				\end{prop}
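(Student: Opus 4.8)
The plan is to exhibit $B_{Y}$ as an $\ell_{\iy}$-direct sum of two balls, each of which is already known to satisfy property-$(P_1)$, and then to apply Theorem~\ref{T4}. The structural input is the following description of a proximinal finite co-dimensional subspace $Y$ of $c_0$ (a known fact, cf.\ \cite{GV}): there is a finite set $F\ci\mb{N}$ such that $Y\supseteq\{x\in c_0\colon x(i)=0\text{ for every }i\in F\}$. Writing $n=\vert F\vert$ and splitting the coordinates of $c_0$ and of $\ell_{\iy}$ along $F$ and $\mb{N}\sm F$ yields isometric identifications $c_0=\mb{R}^{n}\oplus_{\iy}c_0$ and $\ell_{\iy}=\mb{R}^{n}\oplus_{\iy}\ell_{\iy}$, with $\mb{R}^{n}$ equipped with the supremum norm, under which $Y=W\oplus_{\iy}c_0$ for the subspace $W\coloneqq Y\cap\mb{R}^{n}$ of $\mb{R}^{n}$; consequently $B_{Y}=V_1\times B_{c_0}$, where $V_1\coloneqq B_{\mb{R}^{n}}\cap W\in\mc{CV}(\mb{R}^{n})$.

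The reason behind this structural fact is brief, and I would either quote it or include it: if some $g$ in the finite-dimensional space $Y^{\perp}\ci\ell_1$ had infinite support, then $g$ would not attain its norm on $B_{c_0}$; but composing the natural quotient maps one obtains a quotient map $c_0/Y\to c_0/\ker g\cong\mb{R}$, and since $c_0/Y$ is finite dimensional its closed unit ball is compact, so its image under this map is all of $[-1,1]$; proximinality of $Y$ in $c_0$ then forces $g$ to attain its norm on $B_{c_0}$ --- a contradiction. Hence every element of $Y^{\perp}$ is finitely supported, so $Y^{\perp}\ci\mb{R}^{F}$ for a suitable finite $F$, which is the asserted inclusion.

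It remains to check the two factors and to assemble. First, $(\mb{R}^{n},V_1,\mc{CB}(\mb{R}^{n}))$ has property-$(P_1)$: since $\mb{R}^{n}$ is finite dimensional, $V_1$ is a non-empty compact convex set and $v\mapsto r(v,B)$ is continuous for each $B\in\mc{CB}(\mb{R}^{n})$, so $(V_1,\mc{CB}(\mb{R}^{n}))$ has \trcp; and were property-$(P_1)$ to fail there would exist $\e>0$, $B\in\mc{CB}(\mb{R}^{n})$ and $v_k\in\emph{cent}_{V_1}(B,1/k)$ with $d(v_k,\emph{cent}_{V_1}(B))>\e$ for all $k$, whence a convergent subsequence $v_k\to v$ together with the continuity of $v\mapsto r(v,B)$ would give the contradiction $v\in\emph{cent}_{V_1}(B)$ while $d(v,\emph{cent}_{V_1}(B))\geq\e$. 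Second, $(\ell_{\iy},B_{c_0},\mc{CB}(\ell_{\iy}))$ has property-$(P_1)$ by Corollary~\ref{C1}. Applying Theorem~\ref{T4} with $X_1=\mb{R}^{n}$, $V_1$ as above, $X_2=\ell_{\iy}$ and $V_2=B_{c_0}$ then shows that $(\mb{R}^{n}\oplus_{\iy}\ell_{\iy},\,V_1\times B_{c_0},\,\mc{CB}(\mb{R}^{n}\oplus_{\iy}\ell_{\iy}))$ has property-$(P_1)$, which under the identifications above is precisely $(\ell_{\iy},B_{Y},\mc{CB}(\ell_{\iy}))$.

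The only non-routine ingredient is the structural reduction $Y=W\oplus_{\iy}c_0$; once that is in hand, the statement is purely an application of the $\ell_{\iy}$-direct-sum stability machinery of Section~\ref{S3} together with Corollary~\ref{C1}. Note that $W$ may be an arbitrary subspace of $\mb{R}^{n}$: the proximinality and finite co-dimension of $Y$ enter only through securing this decomposition.
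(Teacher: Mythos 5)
Your proof follows essentially the same route as the paper's: decompose $\ell_{\iy}$ and $Y$ as $\ell_{\iy}$-direct sums of a finite-dimensional piece and a copy of $\ell_{\iy}$ (resp.\ $c_0$), handle the finite-dimensional factor by a compactness argument, invoke Corollary~\ref{C1} for the $B_{c_0}$ factor, and assemble via Theorem~\ref{T4}. The only difference is that you spell out the two ingredients the paper leaves implicit --- the norm-attainment argument behind the decomposition (which the paper cites to \cite[Section~4]{VI}) and the compactness argument for the finite-dimensional factor --- and both are carried out correctly.
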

				\begin{proof}
					Let $Y$ be a proximinal finite co-dimensional subspace of $c_0$. The following decomposition is well-known: the spaces $\ell_{\iy}$ and $Y$ are isometrically isomorphic to $X \oplus_{\iy} \ell_{\iy}$ and $Z \oplus_{\iy} c_0$ respectively for some finite dimensional spaces $X$ and $Z$ with $Z \ci X\ci c_0$  (see \cite[Section~4]{VI} for a detailed explanation). Clearly, $B_{Y} = B_{Z} \times B_{c_0}$. Using a compactness argument, we easily observe that $(X, B_{Z},\mc{CB}(X))$ has property-$(P_1)$. Therefore by Corollary~\ref{C1} and Theorem~\ref{T4}, we conclude that $(\ell_{\iy}, B_Y, \mc{CB}(\ell_{\iy}))$ has property-$(P_1)$. 
				\end{proof}

				\section{Semi-continuity of restricted Chebyshev-center maps of $\ell_{\iy}$-direct sums}\label{S4}
				In this section, we first derive few stability results concerning the semi-continuity properties of restricted Chebyshev-center maps of $\ell_{\iy}$-direct sums.
				
				\begin{prop}\label{T1}
					For each $i \in \{1,2\}$, let $X_i$ be a Banach space and $V_{i} \in \mc{CV}(X_i)$ such that $(V_i, \mc{CB}(X_{i}))$ has \trcp. Let $X = X_{1} \oplus_{\iy} X_2$ and $V = V_{1} \times V_2$. If for each $i \in \{1,2\}$, $\textrm{cent}_{V_{i}}(.)$ is \lc~on $\mc{CB}(X_{i})$ then the map $\textrm{cent}_{V}(.)$ is \lc~on $\mc{CB}(X)$. 
				\end{prop}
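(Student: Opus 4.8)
The plan is to verify directly that $\textrm{cent}_{V}(.)$ is \lc\ at an arbitrary $B\in\mc{CB}(X)$, following the case split used in the proof of Theorem~\ref{T4} and exploiting the explicit descriptions of $\textrm{cent}_{V}(.)$ in Remark~\ref{rem1}~(i); note that $(V,\mc{CB}(X))$ has \trcp\ by Proposition~\ref{Prop1}, so the map is well defined. First I would fix $B\in\mc{CB}(X)$ and $\e>0$, set $R_i=r_i(B)$ for $i\in\{1,2\}$, and extract from the \lc\ hypothesis on $\textrm{cent}_{V_i}(.)$ a constant $\de_i>0$ such that for every $C\in\mc{CB}(X_i)$ with $d_H(B(i),C)<\de_i$ and every $z_i\in\textrm{cent}_{V_i}(B(i))$ one has $B_{X_i}(z_i,\e)\cap\textrm{cent}_{V_i}(C)\neq\es$. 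Throughout I would use that, by Remark~\ref{rem1}~(ii) and Lemma~\ref{L2}, $d_H(B,A)<\de$ forces $d_H(B(i),A(i))<\de$ and $\vert R_i-r_i(A)\vert<\de$ for $i\in\{1,2\}$.

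The first case is $R_1=R_2$. Here I would take $\msc{N}(B,\e)=\{A\in\mc{CB}(X)\colon d_H(A,B)<\min\{\de_1,\de_2\}\}$: for $A$ in this neighbourhood and $z=(z_1,z_2)\in\textrm{cent}_{V}(B)$, Remark~\ref{rem1}~(i) gives $z_i\in\textrm{cent}_{V_i}(B(i))$, the \lc\ hypothesis then produces $w_i\in\textrm{cent}_{V_i}(A(i))$ with $\|z_i-w_i\|<\e$, and since $\textrm{cent}_{V_1}(A(1))\times\textrm{cent}_{V_2}(A(2))\ci\textrm{cent}_{V}(A)$ holds unconditionally (Remark~\ref{rem1}~(i)), the point $w=(w_1,w_2)$ lies in $\textrm{cent}_{V}(A)$ and satisfies $\|z-w\|<\e$. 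The second case is $R_1\neq R_2$; by symmetry I would assume $R_1<R_2$. Restricting to $d_H(A,B)<(R_2-R_1)/2$ guarantees $r_1(A)<r_2(A)$, so Remark~\ref{rem1}~(i) gives $\textrm{cent}_{V}(A)=\big(\bigcap_{a_1\in A(1)}B_{X_1}[a_1,r_2(A)]\cap V_1\big)\times\textrm{cent}_{V_2}(A(2))$ and, likewise, $\textrm{cent}_{V}(B)=\big(\bigcap_{b_1\in B(1)}B_{X_1}[b_1,R_2]\cap V_1\big)\times\textrm{cent}_{V_2}(B(2))$. Since $R_2>R_1=\textrm{rad}_{V_1}(B(1))$, I would invoke Lemma~\ref{L4.6.1} (with $X_1,V_1,\mc{CB}(X_1),B(1),R_2$ in the roles of $X,V,\msc{F},F,\al$) to obtain $\de'>0$ such that $d_H(B(1),A(1))<\de'$ and $\vert R_2-r_2(A)\vert<\de'$ imply $d_H\big(\bigcap_{b_1\in B(1)}B_{X_1}[b_1,R_2]\cap V_1,\;\bigcap_{a_1\in A(1)}B_{X_1}[a_1,r_2(A)]\cap V_1\big)<\e$. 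Then, with $\msc{N}(B,\e)=\{A\in\mc{CB}(X)\colon d_H(A,B)<\min\{\de',\de_2,(R_2-R_1)/2\}\}$, for $A\in\msc{N}(B,\e)$ and $z=(z_1,z_2)\in\textrm{cent}_{V}(B)$ the \lc\ hypothesis for $V_2$ gives $w_2\in\textrm{cent}_{V_2}(A(2))$ with $\|z_2-w_2\|<\e$ and the Hausdorff estimate above (applied to $z_1\in\bigcap_{b_1\in B(1)}B_{X_1}[b_1,R_2]\cap V_1$) gives $w_1\in\bigcap_{a_1\in A(1)}B_{X_1}[a_1,r_2(A)]\cap V_1$ with $\|z_1-w_1\|<\e$, so $w=(w_1,w_2)\in\textrm{cent}_{V}(A)$ and $\|z-w\|<\e$.

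The step I expect to be the main obstacle is reconciling the shape of $\textrm{cent}_{V}(A)$ for $A$ near $B$ with that of $\textrm{cent}_{V}(B)$: precisely when $R_1=R_2$, an arbitrarily small perturbation of $B$ can destroy the equality $r_1=r_2$, so $\textrm{cent}_{V}(A)$ may take the ``unbalanced'' form. The way around this is the unconditional inclusion $\textrm{cent}_{V_1}(A(1))\times\textrm{cent}_{V_2}(A(2))\ci\textrm{cent}_{V}(A)$ of Remark~\ref{rem1}~(i), which renders the exact form of $\textrm{cent}_{V}(A)$ irrelevant in that case. A related delicate point is that Lemma~\ref{L4.6.1} demands a radius parameter \emph{strictly} larger than the restricted Chebyshev radius, which is exactly why the case $R_1=R_2$ must be treated separately rather than folded into $R_1<R_2$. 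Once these observations are in place, what remains is routine bookkeeping with the constants $\de_1,\de_2,\de'$.
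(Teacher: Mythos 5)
Your proposal is correct and follows essentially the same route as the paper's proof: the same case split on $r_1(B)$ versus $r_2(B)$, the same use of the unconditional inclusion $\textrm{cent}_{V_1}(A(1))\times\textrm{cent}_{V_2}(A(2))\ci\textrm{cent}_{V}(A)$ from Remark~\ref{rem1}~(i) in the balanced case, and the same appeal to Lemma~\ref{L4.6.1} with $\al=r_2(B)$ in the unbalanced case. The only cosmetic difference is that you impose $d_H(A,B)<(R_2-R_1)/2$ explicitly to force $r_1(A)<r_2(A)$, whereas the paper folds this into the choice $\de<\ga/2$ coming out of Lemma~\ref{L4.6.1}.
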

				\begin{proof}
					By Proposition~\ref{Prop1}, $(V,\mc{CB}(X))$ has \trcp. Let $B \in \mc{CB}(X)$ and $\e>0$. Using lower Hausdorff semi-continuity of the maps $\emph{cent}_{V_{1}}(.)$ and $\emph{cent}_{V_{2}}(.)$ at $B(1)$ and $B(2)$ respectively, there exists $\de>0$ such that for $i \in \{1,2\}$, if $A \in \mc{CB}(X)\mbox{ with }d_{H}(B,A) < \de\mbox{ and }\rho_{i} \in \emph{cent}_{V_{i}}(B(i))$ then
					\begin{equation}\label{Eqn1}
						B_{X_{i}}(\rho_{i},\e) \cap \emph{cent}_{V_{i}}(A(i)) \neq \es.
					\end{equation}
					
					{\sc Case $1\colon$ }$r_{1}(B) = r_{2}(B)$
					
					By Remark~\ref{rem1} (i), $\emph{cent}_{V}(B) = \emph{cent}_{V_1}(B(1)) \times \emph{cent}_{V_2}(B(2))$.
					Let $i \in \{1,2\}$. Let $\rho_{i} \in \emph{cent}_{V_{i}}(B(i))$ and $A \in \mc{CB}(X)$ such that $d_{H}(B,A) < \de$. Then by Remark~\ref{rem1} (ii), $d_{H}(B(i), A(i)) < \de$. Hence by $(\ref{Eqn1})$, there exists $v_{i} \in B_{X_{i}}(\rho_{i},\e) \cap \emph{cent}_{V_{i}}(A(i)).$ By Remark~\ref{rem1} (i), $(\rho_1,\rho_2) \in \emph{cent}_{V}(B)$ and $(v_1,v_2) \in \emph{cent}_{V}(A)$. Moreover, $\|(\rho_1,\rho_2) - (v_1,v_2)\| < \e.$ Hence $B_{X}((\rho_1,\rho_2),\e) \cap \emph{cent}_{V}(A)\neq \es$. Thus $\emph{cent}_{V}(.)$ is \lc~at $B$.
					
					{\sc Case $2\colon$ } $r_{1}(B) \neq r_{2}(B)$
					
					Without loss of generality, assume $r_{1}(B) < r_{2}(B)$, since the same arguments work for the reverse inequality. Let $2 \ga = r_{2}(B) - r_{1}(B)$. Replacing $X$, $V$, $B$ and $\al$ by $X_1$, $V_1$, $B(1)$ and $r_{2}(B)$ respectively in Lemma~\ref{L4.6.1}, we obtain $0<\de<\frac{\ga}{2}$ such that whenever $A \in \mc{CB}(X)$ with $d_{H}(B,A)< \de$, we have $r_{2}(A) - r_{1}(A) > \ga$ and 
					\begin{equation}\label{Eqn14}
						d_{H}\left(\bigcap_{b_1 \in B(1)} B_{X_1}[b_1, r_{2}(B)] \cap V_1,\bigcap_{a_1 \in A(1)} B_{X_1}[a_1, r_{2}(A)] \cap V_1\right) <\e.
					\end{equation}
					
					Without loss of generality, assume that $\de$ is so chosen that $(\ref{Eqn1})$ is also satisfied. Let $A \in \mc{CB}(X)$ with $d_{H}(B,A) < \de$. Then $r_{2}(A)> r_1(A)$ and hence by Remark~\ref{rem1} (i), $\emph{cent}_{V}(A) = \bigcap_{a_1 \in A(1)} B_{X_1}[a_1, r_{2}(A)] \cap V_1 \times \emph{cent}_{V_2}(A(2))$. Let $v_1 \in \bigcap_{b_1 \in B(1)} B_{X_1}[b_1, r_{2}(B)] \cap V_1$ and $v_2 \in \emph{cent}_{V_2}(B(2))$ and hence by Remark~\ref{rem1} (i), $v=(v_1,v_2) \in \emph{cent}_{V}(B)$. Therefore, by (\ref{Eqn1}) and (\ref{Eqn14}), there exists $w_1 \in \bigcap_{a_1 \in A(1)} B_{X_1}[a_1, r_{2}(A)] \cap V_1$ and $w_2 \in \emph{cent}_{V_2}(A(2))$ such that $\|v_1-w_1\| <\e$ and $\|v_2- w_2\|<\e$. Thus $ (w_1,w_2) \in B_{X}(v,\e) \cap \emph{cent}_{V}(A)$. 
				\end{proof}
				
				\begin{prop}\label{T2}
					For each $i \in \{1,2\}$, let $X_i$ be a Banach space and $V_{i} \in \mc{CV}(X_i)$. Let $X = X_{1} \oplus_{\iy} X_2$ and $V = V_{1} \times V_2$. If for each $i \in \{1,2\}$, $(X_i, V_i ,\mc{CB}(X_i))$ has property-$(P_1)$ then the map $\textrm{cent}_{V}(.)$ is \uc~on $\mc{CB}(X)$. 
				\end{prop}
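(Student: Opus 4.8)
The plan is to prove upper Hausdorff semi-continuity of $\textrm{cent}_{V}(.)$ by a direct quantitative perturbation argument that mirrors the proof of Proposition~\ref{T1} and again splits according to whether $r_{1}(B) = r_{2}(B)$ or not. (A quick non-quantitative route is Theorem~\ref{T4} followed by Theorem~\ref{Rem2}; I instead exhibit the explicit perturbation estimates.) Fix $B \in \mc{CB}(X)$ and $\e>0$. The main tools are Proposition~\ref{Prop2}, which for any $v=(v_1,v_2) \in \textrm{cent}_{V}(A)$ yields the coordinatewise bound $r(v_i,A(i)) \leq \textrm{rad}_{V}(A) = \max\{r_1(A),r_2(A)\}$; Lemma~\ref{L2} together with Remark~\ref{rem1}(ii), which give $\vert r_i(A) - r_i(B)\vert \leq d_{H}(B,A)$; and property-$(P_1)$ of each $(X_i,V_i,\mc{CB}(X_i))$ applied to the \emph{fixed} sets $B(i)$, furnishing $\de_i>0$ with $\textrm{cent}_{V_i}(B(i),\de_i) \ci \textrm{cent}_{V_i}(B(i)) + \e B_{X_i}$. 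Throughout I use that $B_X = B_{X_1} \times B_{X_2}$, so that a coordinatewise $\e$-inclusion into $\textrm{cent}_{V_1}(B(1)) \times \textrm{cent}_{V_2}(B(2))$ is exactly an $\e$-inclusion into $\textrm{cent}_{V}(B)$ by Remark~\ref{rem1}(i).

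For the case $r_{1}(B) = r_{2}(B) =: R$, I would take any $A$ with $d_{H}(B,A)<\de$ and any $v=(v_1,v_2) \in \textrm{cent}_{V}(A)$. Since both $r_i(A)$ lie within $\de$ of $R$, the coordinatewise bound gives $r(v_i,A(i)) \leq r_i(A) + 2\de$, and one more triangle-inequality step against $B(i)$ shows $r(v_i,B(i)) \leq \textrm{rad}_{V_i}(B(i)) + 4\de$, i.e. $v_i \in \textrm{cent}_{V_i}(B(i),4\de)$. Choosing $\de < \tfrac14\min\{\de_1,\de_2\}$ and invoking property-$(P_1)$ then places each $v_i$ in $\textrm{cent}_{V_i}(B(i)) + \e B_{X_i}$, which is the desired inclusion $\textrm{cent}_{V}(A) \ci \textrm{cent}_{V}(B) + \e B_X$. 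The point worth stressing is that for $A$ near $B$ the set $\textrm{cent}_{V}(A)$ may assume any of the three shapes in Remark~\ref{rem1}(i), but the single coordinatewise estimate from Proposition~\ref{Prop2} covers all of them at once, so no sub-case analysis is needed here.

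The case $r_{1}(B) \neq r_{2}(B)$, say $r_{1}(B) < r_{2}(B)$ with $2\ga = r_2(B)-r_1(B)$, is where the real obstacle appears. Keeping $\de < \ga/2$ forces $r_1(A) < r_2(A)$ for every nearby $A$, so Remark~\ref{rem1}(i) gives $\textrm{cent}_{V}(A) = \big(\bigcap_{a_1 \in A(1)} B_{X_1}[a_1,r_2(A)] \cap V_1\big) \times \textrm{cent}_{V_2}(A(2))$. The second factor is handled exactly as in the first case: any $v_2 \in \textrm{cent}_{V_2}(A(2))$ satisfies $r(v_2,B(2)) \leq \textrm{rad}_{V_2}(B(2)) + 2\de$, so property-$(P_1)$ controls it once $2\de < \de_2$. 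The first factor is the hard part: it is a \emph{fattened} restricted-center set of $A(1)$ whose excess radius $r_2(A)-r_1(A) \approx 2\ga$ does not shrink as $A \to B$, so property-$(P_1)$ cannot be applied to it directly.

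I would overcome this exactly as in Proposition~\ref{T1}, by feeding it to Lemma~\ref{L4.6.1}: applying that lemma with $(X_1,V_1,B(1))$ in the role of $(X,V,F)$ and $\al = r_2(B) > \textrm{rad}_{V_1}(B(1))$ produces $\de'>0$ such that, taking $\be = r_2(A)$ (legitimate since $\vert r_2(B)-r_2(A)\vert < \de$ and $d_{H}(B(1),A(1)) < \de$), the two intersection-of-balls sets are within Hausdorff distance $\e$; in particular the first factor of $\textrm{cent}_{V}(A)$ lies in the first factor of $\textrm{cent}_{V}(B)$ fattened by $\e B_{X_1}$. Setting $\de = \min\{\ga/2,\de',\tfrac12\min\{\de_1,\de_2\}\}$ and recombining the two factors via $B_X = B_{X_1}\times B_{X_2}$ yields $\textrm{cent}_{V}(A) \ci \textrm{cent}_{V}(B) + \e B_X$, completing the argument.
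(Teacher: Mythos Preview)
Your proof is correct and follows the same overall strategy as the paper's: split on whether $r_1(B) = r_2(B)$, use property-$(P_1)$ of each factor applied to the fixed sets $B(i)$, and in the asymmetric case invoke Lemma~\ref{L4.6.1} (with $\al = r_2(B)$, $\be = r_2(A)$) to control the fattened intersection-of-balls factor. The only notable difference is in Case~1, where the paper further splits according to whether $r_1(A) = r_2(A)$ (using Theorem~\ref{Rem2} for the equal sub-case and a direct $\mc{S}(B(i),\theta)$ estimate otherwise); your single coordinatewise bound $r(v_i,A(i)) \leq \max\{r_1(A),r_2(A)\}$ from Proposition~\ref{Prop2} handles all shapes of $\textrm{cent}_{V}(A)$ at once and so avoids that sub-split, which is a mild streamlining of the paper's argument.
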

				\begin{proof}
					By Proposition~\ref{Prop1}, $(V,\mc{CB}(X))$ has \trcp. By Theorem~\ref{Rem2}, for each $i \in \{1,2\}$, $\emph{cent}_{V_{i}}(.)$ is \uc~on $\mc{CB}(X_i)$. Let $B \in \mc{CB}(X)$ and $\e>0$. Then there exists $\de>0$ such that for each $i \in \{1,2\}$, if $A \in \mc{CB}(X)$ with
					
					\begin{equation}\label{Eqn11}
						d_{H}(B,A) < \de \Ra \emph{cent}_{V_i}(A(i)) \ci \emph{cent}_{V_i}(B(i)) + \e B_{X_{i}}.
					\end{equation}
					
					{\sc Case $1\colon$ }$r_{1}(B) = r_{2}(B)$
					
					By Remark~\ref{rem1} (i), $\emph{cent}_{V}(B) = \emph{cent}_{V_1}(B(1)) \times \emph{cent}_{V_2}(B(2))$. By our assumption and Remark~\ref{rem2} (ii), we can choose $\theta >0$ such that for $i \in \{1,2\}$, $\mc{S}(B(i),\theta)< \e$. We further choose $0 < \de< \frac{\theta}{4}$ such that $(\ref{Eqn11})$ holds valid. Let $A \in \mc{CB}(X)$ such that $d_{H}(B,A)< \de$. If $r_{1}(A) = r_{2}(A)$ then $\emph{cent}_{V}(A) = \emph{cent}_{V_1}(A(1)) \times \emph{cent}_{V_2}(A(2))$. Thus by $(\ref{Eqn11})$, we obtain $\emph{cent}_{V}(A) \ci \emph{cent}_{V}(B) + \e B_{X}.$ 
					
					Now without loss of generality, we assume that $r_{1}(A)< r_{2}(A)$. For $i \in \{1,2\}$, since \[\vert r_{i}(B) - r_{i}(A) \vert \leq d_{H}(B(i),A(i)) \leq d_{H}(B,A) < \frac{\theta}{4},\] we have 
					\begin{equation}\label{Eqn10}
						\vert r_{1}(B) - r_{2}(A) \vert \leq \vert r_{1}(B) - r_{2}(B) \vert + \vert r_{2}(B) - r_{2}(A) \vert < \frac{\theta}{4}.
					\end{equation}   
					Now, by Remark~\ref{rem1} (i), \[\emph{cent}_{V}(A) = \bigcap_{a_1 \in A(1)} B_{X_1}[a_1, r_{2}(A)] \cap V_1 \times \emph{cent}_{V_2}(A(2)).\] Let $v_1 \in \bigcap_{a_1 \in A(1)} B_{X_1}[a_1, r_{2}(A)] \cap V_1$. Then using $(\ref{Eqn10})$ and the assumption that $d_{H}(B,A)<\de$, for each $b_1 \in B(1)$, there exists $a_1 \in A(1)$ such that $\|a_1 - b_1\|< \frac{\theta}{4}$ and hence \[\|v_1 - b_1\| \leq \|v_1 - a_1\| + \|a_1 - b_1\| < r_2(A)+\frac{\theta}{4} < r_{1}(B) + \frac{\theta}{2}.\] It follows that $r(v_1, B(1)) \leq r_1(B) + \frac{\theta}{2}$. Now, since $\mc{S}(B(1),\theta)<\e$, $d(v_1,\emph{cent}_{V_1}(B(1)))<\e$. Thus there exists $w_1 \in \emph{cent}_{V_1}(B(1))$ satisfying $\|v_1-w_1\|<\e$. It follows that \[\bigcap_{a_1 \in A(1)}B_{X_1}[a_1, r_2(A)] \cap V_1 \ci \emph{cent}_{V_1}(B(1)) + \e B_{X_1}.\] Moreover, $\emph{cent}_{V_2}(A(2)) \ci \emph{cent}_{V_2}(B(2)) + \e B_{X_2}$. Hence, $\emph{cent}_{V}(A) \ci \emph{cent}_{V}(B) + \e B_{X}.$

					{\sc Case $2\colon$ }$r_{1}(B) \neq r_{2}(B)$ 	  
					
					We discuss only $r_{1}(B) < r_{2}(B)$ since the same arguments work for the reverse inequality. Let $2 \ga = r_{2}(B) - r_{1}(B)$. Replacing $X$, $V$, $B$ and $\al$ by $X_1$, $V_1$, $B(1)$ and $r_{2}(B)$ respectively in Lemma~\ref{L4.6.1}, we obtain $0<\de<\frac{\ga}{2}$ such that whenever $A \in \mc{CB}(X)$ with $d_{H}(B,A)< \de$, we have $r_{2}(A) - r_{1}(A) > \ga$ and
					\begin{equation}\label{Eqn12}
						d_{H}\left(\bigcap_{b_1 \in B(1)} B_{X_1}[b_1, r_{2}(B)] \cap V_1,\bigcap_{a_1 \in A(1)} B_{X_1}[a_1, r_{2}(A)] \cap V_1\right) <\e.
					\end{equation}
					We choose $\de>0$ such that $(\ref{Eqn11})$ is also satisfied. If $A \in \mc{CB}(X)$ such that $d_{H}(B,A)< \de$ then $r_2(A)>r_1(A)$ and hence by Remark~\ref{rem1} (i), \[\emph{cent}_{V}(A) = \bigcap_{a_1 \in A(1)} B_{X_1}[a_1, r_{2}(A)] \cap V_1\ \times \emph{cent}_{V_2}(A(2)).\] Let $A \in \mc{CB}(X)$ such that $d_{H}(B,A)<\de$. If $v_1 \in \bigcap_{a_1 \in A(1)} B_{X_1}[a_1, r_{2}(A)] \cap V_1$ and $v_2 \in \emph{cent}_{V_2}(A(2))$ then by $(\ref{Eqn11})$ and $(\ref{Eqn12})$, we choose $w_1 \in \bigcap_{b_1 \in B(1)} B_{X_1}[b_1, r_2(B)] \cap V_1$ and $w_2 \in \emph{cent}_{V_2}(B(2))$ satisfying $\|v_i-w_i\|< \e$ for $i\in  \{1,2\}$. Thus $(w_1,w_2) \in \emph{cent}_{V}(B)$ and $\|(v_1,v_2)-(w_1,w_2)\| < \e$. It follows that $\emph{cent}_{V}(A) \ci \emph{cent}_{V}(B) + \e B_{X}$.
				\end{proof}

				The following result follows from Propositions~\ref{T1} and $\ref{T2}$ and \cite[Remark~2.8]{VI}.
				\begin{prop}\label{T6}
					For each $i \in \{1,2\}$, let $X_i$ be a Banach space and $V_{i} \in \mc{CV}(X_i)$ such that $(X_i, V_i ,\mc{CB}(X_i))$ has property-$(P_1)$. Let $X = X_{1} \oplus_{\iy} X_2$ and $V = V_{1} \times V_2$. If for each $i\in \{1,2\}$, $\textrm{cent}_{V_i}(.)$ is Hausdorff metric continuous on $\mc{CB}(X_i)$ then $\textrm{cent}_{V}(.)$ is Hausdorff metric continuous on $\mc{CB}(X)$.
				\end{prop}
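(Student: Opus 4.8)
The plan is to obtain Proposition~\ref{T6} as a formal consequence of the two preceding stability results, Propositions~\ref{T1} and \ref{T2}, together with the decomposition of Hausdorff metric continuity into its two semi-continuous halves recorded in \cite[Remark~2.8]{VI}. Before applying either proposition I would check that their hypotheses are in force: since each triplet $(X_i,V_i,\mc{CB}(X_i))$ has property-$(P_1)$, each pair $(V_i,\mc{CB}(X_i))$ has \trcp, so Proposition~\ref{Prop1} guarantees that $(V,\mc{CB}(X))$ has \trcp~and that all the restricted Chebyshev-center maps in question are well defined.

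First I would invoke the lower-semicontinuity half. By \cite[Remark~2.8]{VI}, the hypothesis that $\textrm{cent}_{V_i}(.)$ is Hausdorff metric continuous on $\mc{CB}(X_i)$ implies in particular that it is \lc~on $\mc{CB}(X_i)$ for $i \in \{1,2\}$. Feeding this into Proposition~\ref{T1} gives that $\textrm{cent}_{V}(.)$ is \lc~on $\mc{CB}(X)$.

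Next I would invoke the upper-semicontinuity half. Here the relevant hypothesis is the assumed property-$(P_1)$ of each factor triplet $(X_i,V_i,\mc{CB}(X_i))$, which is exactly what Proposition~\ref{T2} requires; it then yields that $\textrm{cent}_{V}(.)$ is \uc~on $\mc{CB}(X)$. Finally, applying \cite[Remark~2.8]{VI} once more, this time in the Banach space $X$, a set-valued map into $\mc{CB}(X)$ that is simultaneously \lc~and \uc~on $\mc{CB}(X)$ is Hausdorff metric continuous on $\mc{CB}(X)$, which is the assertion to be proved. I expect no genuine obstacle in this argument: the substantive work is already carried out in Propositions~\ref{T1} and \ref{T2}, and the only point demanding a little care is matching the hypotheses — lower Hausdorff semi-continuity of the coordinate maps for the first proposition, and property-$(P_1)$ of the coordinate triplets for the second — both of which are immediate from the stated assumptions.
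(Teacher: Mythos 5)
Your proposal is correct and is exactly the argument the paper intends: the paper derives Proposition~\ref{T6} by combining Proposition~\ref{T1} (for lower Hausdorff semi-continuity, using that Hausdorff metric continuity of each $\textrm{cent}_{V_i}(.)$ implies its \lc~via \cite[Remark~2.8]{VI}), Proposition~\ref{T2} (for upper Hausdorff semi-continuity from property-$(P_1)$ of the factors), and then \cite[Remark~2.8]{VI} once more to conclude. Your extra care in checking that \trcp~holds for $(V,\mc{CB}(X))$ via Proposition~\ref{Prop1} is a sensible, harmless addition.
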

				
				We note here that the stability results proved in Sections~\ref{S3} and $\ref{S4}$ are similarly valid if we replace the class of all non-empty closed bounded subsets with the class of all non-empty compact or finite subsets of the respective spaces.
				
				We recall that a finite dimensional Banach space $X$ is called \textit{polyhedral} if $B_X$ has only finitely many extreme points. An infinite dimensional Banach space $X$ is called polyhedral if each of the finite dimensional subspace of $X$ is polyhedral. The space $c_0$ is a well-known example of an infinite dimensional polyhedral space. We refer to \cite{GM} and the references therein for a study on polyhedral spaces. I. G. Tsar\textquoteright kov recently proved
				\begin{thm}[{\cite[p.~243]{IGT} and \cite[Theorem~6.7, pg.~801]{AT}}]\label{T3}
					Let $V$ be a non-empty polyhedral subset of a finite dimensional polyhedral Banach space $X$. Then the map $\textrm{cent}_{V}(.)$ is globally Lipschitz Hausdorff metric continuous on $\mc{CB}(X)$ and admits a Lipschitz selection.
				\end{thm}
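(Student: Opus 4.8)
The plan is to use the polyhedrality of $B_X$ to write $\textrm{cent}_{V}(B)$ as a polyhedron whose facet normals do not depend on $B$, with only the right-hand sides varying, and varying Lipschitz-continuously in $d_{H}$; Hoffman's error bound for linear inequality systems then yields the global Lipschitz estimate, and composing with the Steiner point map yields the Lipschitz selection. To start, since $X$ is finite dimensional and $B_X$ is a polytope, the dual ball $B_{X^{\ast}}$ is a polytope; letting $f_1,\dots,f_m$ be its extreme points, we have $\|f_i\| = 1$ for each $i$ and $\|x\| = \max_{1\le i\le m} f_i(x)$ for every $x \in X$. For $v \in X$ and $B \in \mc{CB}(X)$, an elementary two-sided inequality interchanging $\sup_{b\in B}$ with the finite $\max_i$ gives
\[r(v,B) = \max_{1\le i\le m}\bigl(f_i(v) - m_i(B)\bigr), \qquad m_i(B) := \inf_{b\in B} f_i(b).\]
Hence $r(\cdot,B)$ is piecewise affine, convex and coercive, and depends on $B$ only through the scalars $m_1(B),\dots,m_m(B)$; moreover $|m_i(B) - m_i(B')| \le d_{H}(B,B')$ for all $B,B' \in \mc{CB}(X)$ (each point of $B$ lies within $d_{H}(B,B')$ of $B'$, and $\|f_i\|=1$).

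Next, write $V = \{v \in X\colon g_j(v) \le c_j,\ 1\le j\le k\}$ as a finite intersection of closed half-spaces. Coercivity of $r(\cdot,B)$ on the nonempty closed set $V$ shows that $\textrm{rad}_{V}(B) = \min_{v\in V} r(v,B)$ is attained (so $(V,\mc{CB}(X))$ automatically has \trcp) and that $\textrm{cent}_{V}(B)$ is bounded; consequently
\[\textrm{cent}_{V}(B) = \bigl\{v \in X\colon g_j(v) \le c_j\ (1\le j\le k),\ f_i(v) \le \textrm{rad}_{V}(B) + m_i(B)\ (1\le i\le m)\bigr\}\]
is a nonempty compact convex polyhedron. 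Writing this as $\{v\colon Av \le \be(B)\}$, the matrix $A$ with rows $g_1,\dots,g_k,f_1,\dots,f_m$ is \emph{fixed} once $X$ and $V$ are fixed, and only the vector $\be(B)$ — whose $B$-dependent entries are the numbers $\textrm{rad}_{V}(B) + m_i(B)$ — moves with $B$; by Lemma~\ref{L2} and the previous paragraph each such entry is $2$-Lipschitz in $d_{H}$, and $\{v\colon Av \le \be(B)\} = \textrm{cent}_{V}(B) \ne \es$ for every $B$.

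Then I would invoke Hoffman's lemma: for the fixed matrix $A$ there is a constant $\ka = \ka(A)$ such that for every $\be$ with $\{v\colon Av \le \be\} \ne \es$ and every $v \in X$, $d\bigl(v,\{v'\colon Av' \le \be\}\bigr) \le \ka\,\|(Av - \be)^{+}\|_{\iy}$, where $y^{+}$ is the componentwise positive part. For $B,B' \in \mc{CB}(X)$ and $v \in \textrm{cent}_{V}(B)$ we have $Av \le \be(B)$; the components of $(Av - \be(B'))^{+}$ indexed by the half-spaces of $V$ vanish (as $v \in V$), and the others are at most $\be_i(B) - \be_i(B') \le 2d_{H}(B,B')$, so $d(v,\textrm{cent}_{V}(B')) \le 2\ka\,d_{H}(B,B')$. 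Exchanging $B$ and $B'$ yields $d_{H}(\textrm{cent}_{V}(B),\textrm{cent}_{V}(B')) \le L\,d_{H}(B,B')$, where $L$ also absorbs the finite-dimensional constants relating $\|\cdot\|_{\iy}$, the norm of $X$ and any Euclidean norm implicit in Hoffman's estimate; this is the asserted global Lipschitz Hausdorff-metric continuity of $\textrm{cent}_{V}(\cdot)$.

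Finally, for the Lipschitz selection I would compose $\textrm{cent}_{V}(\cdot)$ with the Steiner point map $\si$: it is classical that $\si(K)$ depends Lipschitz-continuously in the Hausdorff metric on a nonempty compact convex set $K$ and that $\si(K) \in K$; since each $\textrm{cent}_{V}(B)$ is such a set and $B \mapsto \textrm{cent}_{V}(B)$ is Lipschitz by the previous step, the map $B \mapsto \si(\textrm{cent}_{V}(B))$ is a Lipschitz selection of $\textrm{cent}_{V}(\cdot)$. The main obstacle — and the only place where polyhedrality is genuinely used — is the structural reduction showing that $\textrm{cent}_{V}(B)$ is a polyhedron with $B$-independent facet normals whose right-hand sides vary Lipschitz-continuously with $B$; once that is in hand, Hoffman's bound and the Steiner point map furnish both assertions with only routine bookkeeping of constants.
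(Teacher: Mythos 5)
The paper offers no proof of this statement: Theorem~\ref{T3} is imported verbatim from Tsar'kov's work (the references \cite{IGT} and \cite{AT}) and is used as a black box in Proposition~\ref{T7}, so there is no in-paper argument to compare yours against. That said, your argument is correct and self-contained, and it is a reasonable reconstruction. The structural reduction is sound: since $B_X$ is a polytope with $0$ in its interior, its polar $B_{X^{\ast}}$ is a polytope, the finite list $f_1,\dots,f_m$ of its extreme points satisfies $\|x\|=\max_i f_i(x)$, and the interchange of $\sup_{b\in B}$ with the finite maximum legitimately gives $r(v,B)=\max_i\bigl(f_i(v)-m_i(B)\bigr)$; hence $\textrm{cent}_{V}(B)$ is a nonempty compact polyhedron with $B$-independent normals whose right-hand sides are $2$-Lipschitz in $d_H$ (Lemma~\ref{L2} handling the $\textrm{rad}_{V}(B)$ term and the dual-functional estimate handling $m_i(B)$). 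Hoffman's error bound, whose constant depends only on the fixed matrix $A$ and not on the right-hand side, then yields the global Lipschitz estimate exactly as you describe, and the Steiner point map (Lipschitz for the Hausdorff metric on compact convex sets in finite dimensions, with $\si(K)\in K$) furnishes the Lipschitz selection. The two places where a referee would want one more line are both ones you already flag: the norm-equivalence bookkeeping converting the Euclidean or $\ell_{\iy}$ norms implicit in Hoffman's lemma and in the Steiner-point estimate into the norm of $X$, and the observation that the $V$-constraints contribute nothing to $(Av-\be(B'))^{+}$ because $v\in V$. Your route has the added benefit of producing an explicit Lipschitz constant of the form $C\ka(A)$ depending only on $X$ and $V$, which is more quantitative than the bare statement being cited.
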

			    
				We are now equipped enough to prove the following result. 
				\begin{prop}\label{T7}
					Let $Y$ be a proximinal finite co-dimensional subspace of $c_0$. Then the map $\textrm{cent}_{B_Y}(.)$ is Hausdorff metric continuous on $\mc{CB}(\ell_{\iy})$.
				\end{prop}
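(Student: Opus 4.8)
The plan is to combine the $\ell_\infty$-direct sum decomposition of the relevant spaces with the stability result for Hausdorff metric continuity (Proposition~\ref{T6}) and the Tsar'kov-type finite-dimensional result (Theorem~\ref{T3}). Concretely, given a proximinal finite co-dimensional subspace $Y$ of $c_0$, I would invoke the same well-known decomposition used in the proof of Proposition~\ref{T5}: $\ell_\infty \cong X \oplus_\infty \ell_\infty$ and $Y \cong Z \oplus_\infty c_0$, where $X$ and $Z$ are finite dimensional with $Z \subseteq X \subseteq c_0$, and $B_Y = B_Z \times B_{c_0}$ under this identification.

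The key steps are then as follows. First I would record that $(X, B_Z, \mc{CB}(X))$ has property-$(P_1)$ (noted in the proof of Proposition~\ref{T5}, via a compactness argument) and that $(\ell_\infty, B_{c_0}, \mc{CB}(\ell_\infty))$ has property-$(P_1)$ by Corollary~\ref{C1}; hence $(B_Z, \mc{CB}(X))$ and $(B_{c_0}, \mc{CB}(\ell_\infty))$ both have \trcp. Second, I would establish Hausdorff metric continuity of the two coordinate maps: for the $\ell_\infty$ factor, $\textrm{cent}_{B_{c_0}}(.)$ is Hausdorff metric continuous on $\mc{CB}(\ell_\infty)$ by Corollary~\ref{C1}; for the finite-dimensional factor, since $X$ is a finite dimensional subspace of $c_0$ it is polyhedral, and $B_Z$ is a polyhedral subset of $X$ (the unit ball of a finite-dimensional polyhedral space, intersected with a subspace, is a polytope), so Theorem~\ref{T3} gives that $\textrm{cent}_{B_Z}(.)$ is globally Lipschitz Hausdorff metric continuous on $\mc{CB}(X)$. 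Third, with both factors having property-$(P_1)$ and both restricted Chebyshev-center maps Hausdorff metric continuous, Proposition~\ref{T6} applies directly to conclude that $\textrm{cent}_{B_Z \times B_{c_0}}(.) = \textrm{cent}_{B_Y}(.)$ is Hausdorff metric continuous on $\mc{CB}(X \oplus_\infty \ell_\infty) = \mc{CB}(\ell_\infty)$, which is the assertion.

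The main obstacle, such as it is, is making sure the hypotheses of Proposition~\ref{T6} and Theorem~\ref{T3} are genuinely met rather than merely plausible: one must verify that $B_Z$ qualifies as a ``non-empty polyhedral subset of a finite dimensional polyhedral Banach space'' (this is where the choice $Z \subseteq X \subseteq c_0$ matters, guaranteeing $X$ is polyhedral), and that property-$(P_1)$ holds for the finite-dimensional triplet (which is where compactness of $B_Z$ does the work — the sets $\textrm{cent}_{B_Z}(B(1),\delta)$ shrink to $\textrm{cent}_{B_Z}(B(1))$ as $\delta \to 0^+$ by a standard compactness argument). Once these are in place the rest is a mechanical citation chain, so the proof is short. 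I would write it in essentially the same style as the proof of Proposition~\ref{T5}, simply swapping property-$(P_1)$ stability (Theorem~\ref{T4}) for Hausdorff continuity stability (Proposition~\ref{T6}) and adding the invocation of Theorem~\ref{T3} for the finite-dimensional coordinate.
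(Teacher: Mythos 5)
Your proposal matches the paper's proof essentially verbatim: the same decomposition $\ell_\infty \cong X \oplus_\infty \ell_\infty$, $Y \cong Z \oplus_\infty c_0$ with $B_Y = B_Z \times B_{c_0}$, Theorem~\ref{T3} for Hausdorff metric continuity of $\textrm{cent}_{B_Z}(.)$ on the polyhedral finite-dimensional factor, Corollary~\ref{C1} for the $c_0$ factor, and the property-$(P_1)$ hypotheses feeding into Proposition~\ref{T6}. The argument is correct and complete.
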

				\begin{proof}
					Consider the decompositions of $\ell_{\iy}$ and $Y$ as $X\oplus_{\iy} \ell_{\iy} $ and $Z \oplus_{\iy} c_0$ respectively where $X$ and $Z$ are defined as in the proof of Proposition~\ref{T5}. Clearly, $B_{Y} = B_{Z} \times B_{c_0}$. Now, $X$ is a finite dimensional subspace of $c_0$ and hence is a polyhedral space. Thus by Theorem~\ref{T3}, $\emph{cent}_{B_{Z}}(.)$ is Hausdorff metric continuous on $\mc{CB}(X)$. Therefore, we conclude from Corollary~\ref{C1} and Theorems~\ref{T4} and $\ref{T6}$ that $\emph{cent}_{B_Y}(.)$ is Hausdorff metric continuous on $\mc{CB}(\ell_{\iy})$.
				\end{proof}	
				
				Lastly, we discuss the interrelationship between the semi-continuity properties of maps $\emph{cent}_{B_X}(.)$ and $\emph{cent}_{X}(.)$ in a Banach space $X$.
				\begin{prop}\label{prop2}
					Let $X$ be a Banach space and $Y$ be a subspace of $X$. Let the pair $(B_Y,\mc{CB}(X))$ have \trcp.
					\blr
					\item For each $\la>0$ and $B \in \mc{CB}(X)$, $\textrm{cent}_{\la B_{Y}}(.)$ is \lc~(resp. \uc) at $B$ if and only if $\textrm{cent}_{B_{Y}}(.)$ is \lc~(resp. \uc) at $\frac{1}{\la} B$.
					\item If $\textrm{cent}_{B_{Y}}(.)$ is \lc~(resp. \uc) on $\mc{CB}(X)$ then $\textrm{cent}_{Y}(.)$ is \lc~(resp. \uc) on $\mc{CB}(X)$.
					\item For each $\la>0$ and $B \in \mc{CB}(X)$, $\textrm{cent}_{\la B_{Y}}(.)$ is Hausdorff metric continuous at $B$ if and only if $\textrm{cent}_{B_{Y}}(.)$ is Hausdorff metric continuous at $\frac{1}{\la} B$.
					\item If $\textrm{cent}_{B_{Y}}(.)$ is Hausdorff metric continuous on $\mc{CB}(X)$ then $\textrm{cent}_{Y}(.)$ is Hausdorff metric continuous on $\mc{CB}(X)$.
					\el
				\end{prop}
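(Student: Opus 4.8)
The plan is to establish (i) by a homothety argument, read off (iii) from (i) using \cite[Remark~2.8]{VI}, bootstrap (ii) from (i) by observing that near any fixed set the map $\textrm{cent}_{Y}(.)$ coincides with $\textrm{cent}_{\la B_{Y}}(.)$ for all large $\la$, and finally obtain (iv) from (ii) and \cite[Remark~2.8]{VI}.

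For (i), the first step is to record the homogeneity relations valid for all $A,A'\in\mc{CB}(X)$, $v\in X$ and $\la>0$: $d_{H}(\la A,\la A')=\la\,d_{H}(A,A')$, $r(\la v,A)=\la\,r\!\left(v,\tfrac{1}{\la} A\right)$, hence $\textrm{rad}_{\la B_{Y}}(A)=\la\,\textrm{rad}_{B_{Y}}\!\left(\tfrac{1}{\la} A\right)$ and $\textrm{cent}_{\la B_{Y}}(A)=\la\,\textrm{cent}_{B_{Y}}\!\left(\tfrac{1}{\la} A\right)$; in particular $(B_{Y},\mc{CB}(X))$ having \trcp~forces $(\la B_{Y},\mc{CB}(X))$ to have \trcp, which is the analogue for \trcp~of \cite[Proposition~2.3]{T1}. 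Then, since $A\mapsto\la A$ is a bi-Lipschitz homeomorphism of $(\mc{CB}(X),d_{H})$ carrying $\tfrac{1}{\la} B$ to $B$, and multiplication by $\la$ on $X$ carries $B_{X}(z,\e)$ to $B_{X}(\la z,\la\e)$ and $\e B_{X}$ to $\la\e B_{X}$, the \lc~condition (resp. the \uc~condition) at $\tfrac{1}{\la} B$ for $\textrm{cent}_{B_{Y}}(.)$ transforms, after rescaling $\e$ by $1/\la$ and dilating the witnessing neighbourhood, into the \lc~condition (resp. the \uc~condition) at $B$ for $\textrm{cent}_{\la B_{Y}}(.)$, and vice versa (run the same computation with $\la$ replaced by $1/\la$). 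Statement (iii) is then immediate, since Hausdorff metric continuity at a point is exactly the conjunction of \lc~and \uc~there by \cite[Remark~2.8]{VI}.

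For (ii), I would fix $B\in\mc{CB}(X)$ and set $\la_{0}=2\,r(0,B)+2$. The heart of the argument is the claim that \emph{for every $\la>\la_{0}$ and every $A\in\mc{CB}(X)$ with $d_{H}(A,B)<1$ one has $\textrm{cent}_{Y}(A)=\textrm{cent}_{\la B_{Y}}(A)$} (in particular this set is non-empty, so as a byproduct $(Y,\mc{CB}(X))$ has \trcp). To see this, $d_{H}(A,B)<1$ gives $r(0,A)\leq r(0,B)+1$ by Lemma~\ref{L2}, so any $y\in Y$ with $\|y\|>\la$ satisfies $r(y,A)\geq\|y\|-r(0,A)>\la-r(0,A)>r(0,A)\geq\textrm{rad}_{\la B_{Y}}(A)$; consequently the values of $\textrm{rad}_{Y}(A)$ and $\textrm{rad}_{\la B_{Y}}(A)$ agree, every restricted Chebyshev centre of $A$ in $Y$ is forced to lie in $\{y\in Y:\|y\|\leq\la\}=\la B_{Y}$, while every centre of $A$ in $\la B_{Y}$ is plainly a centre in $Y$; non-emptiness follows because $(\la B_{Y},\mc{CB}(X))$ has \trcp. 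Given $\e>0$ and any $\la>\la_{0}$, part (i) makes $\textrm{cent}_{\la B_{Y}}(.)$ \lc~(resp. \uc) at $B$, so there is a $d_{H}$-neighbourhood $\msc{N}$ of $B$, which may be shrunk to lie within $d_{H}$-distance $1$ of $B$, on which the defining \lc~(resp. \uc) inclusion for $\textrm{cent}_{\la B_{Y}}(.)$ holds; substituting $\textrm{cent}_{Y}$ for $\textrm{cent}_{\la B_{Y}}$ at $B$ and at every $A\in\msc{N}$ via the claim yields the \lc~(resp. \uc) condition for $\textrm{cent}_{Y}(.)$ at $B$. Since $B$ was arbitrary this proves (ii), and (iv) follows from (ii) together with \cite[Remark~2.8]{VI}.

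The routine part is the bookkeeping of the parameter $\e$ under dilation in (i). I expect the main obstacle to be the uniform coincidence $\textrm{cent}_{Y}(A)=\textrm{cent}_{\la B_{Y}}(A)$ over an entire $d_{H}$-neighbourhood of $B$: one must choose $\la$ in terms of a quantity that depends only on $B$ (here $r(0,B)$) and keep the neighbourhood small enough (here within $d_{H}$-distance $1$) so that the coercivity bound $r(y,A)>\textrm{rad}_{\la B_{Y}}(A)$ for $\|y\|>\la$ holds simultaneously for all $A$ in the neighbourhood.
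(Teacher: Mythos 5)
Your proposal is correct and follows essentially the same route as the paper: part (i) via the dilation/conjugation identity $\textrm{cent}_{\la B_{Y}}(A)=\la\,\textrm{cent}_{B_{Y}}(\tfrac{1}{\la}A)$, part (ii) by choosing $\la$ large in terms of $B$ and showing $\textrm{cent}_{Y}(A)=\textrm{cent}_{\la B_{Y}}(A)$ uniformly on a small $d_{H}$-neighbourhood of $B$, and parts (iii)--(iv) from \cite[Remark~2.8]{VI}. The only cosmetic difference is that you prove the coincidence claim in (ii) directly by a coercivity estimate, where the paper invokes \cite[Lemma~2.1~(iii)]{T1} with the threshold $\la>\sup_{b\in B}\|b\|+\textrm{rad}_{Y}(B)$.
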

				\begin{proof}
					By \cite[Proposition~2.2]{T1}, $(Y,\mc{CB}(X))$ has \trcp. 
					
					{\sc $(i)$} We first assume that $\emph{cent}_{\la B_Y}(.)$ is \lc~at $B$. Let us fix $\e>0$. Then there exists $\de>0$ such that if $A \in \mc{CB}(X)$ with
					\begin{equation} \label{Eqn6}
						d_{H}(B,A) < \de\mbox{, }y \in \emph{cent}_{\la B_{Y}}(B) \Ra B_{X}(y,\la \e) \cap \emph{cent}_{\la B_{Y}}(A) \neq \es.
					\end{equation}
					
					We now set $\ga = \frac{\de}{\la}$. Let $A \in \mc{CB}(X)$ such that $d_{H}(\frac{1}{\la}B, A)< \ga$ and $y \in \emph{cent}_{B_{Y}}(\frac{1}{\la}B)$. This implies $d_{H}(B, \la A)< \ga \la = \de$ and from \cite[Lemma~$2.1$]{T1}, $\la y \in \emph{cent}_{\la B_{Y}}(B)$. Therefore, by \eqref{Eqn6}, let $z \in B_{X}(\la y,\la \e) \cap \emph{cent}_{\la B_{Y}}(\la A)$. Thus by \cite[Lemma~$2.1$]{T1}, $\frac{z}{\la} \in \emph{cent}_{B_Y}(A)$. It follows that $\|\frac{z}{\la} - y\| < \e$ and hence $B_{X}(y,\e) \cap \emph{cent}_{B_Y}(A) \neq \es$.      
					
					Conversely, let $\e>0$. Then there exists $\de>0$ such that if $A \in \mc{CB}(X)$ with
					\begin{equation}\label{Eqn2}
						\begin{split}
							d_{H}\left(\frac{1}{\la}B,A\right) < \de\mbox{, }y \in \emph{cent}_{B_{Y}}\left(\frac{1}{\la}B\right)	
							\Ra B_{X}\left(y,\frac{\e}{\la}\right) \cap \emph{cent}_{B_{Y}}(A) \neq \es.
						\end{split}	
					\end{equation}
					
					We now set $\ga = \la \de$. Let $A \in \mc{CB}(X)$ such that $d_{H}(B, A)< \ga$ and $y \in \emph{cent}_{\la B_{Y}}(B)$. This implies $d_{H}(\frac{1}{\la} B, \frac{1}{\la} A)< \frac{\ga}{\la} = \de$ and from \cite[Lemma~$2.1$]{T1}, $\frac{y}{\la} \in \emph{cent}_{B_{Y}}(\frac{1}{\la} B)$. Therefore, by \eqref{Eqn2}, let $z \in B_{X}(\frac{y}{\la},\frac{\e}{\la}) \cap \emph{cent}_{B_{Y}}(\frac{1}{\la} A)$. Thus by \cite[Lemma~$2.1$]{T1}, $\la z \in \emph{cent}_{\la B_Y}(A)$. It follows that $\|\la z - y\| < \e$ and hence $B_{X}(y,\e) \cap \emph{cent}_{B_Y}(A) \neq \es$. 
					
					{\sc $(ii)$} Let $B \in \mc{CB}(X)$ and $\la > \sup_{b \in B} \|b\| + \emph{rad}_{Y}(B)$. From our assumption, $\emph{cent}_{B_{Y}}(.)$ is \lc~at $\frac{1}{\la} B$. Therefore from $(i)$, $\emph{cent}_{\la B_{Y}}(.)$ is \lc~at $B$. Let $\e>0$. Then there exists $\de>0$ such that if $A \in \mc{CB}(X)$ with
					\begin{align}\label{Eqn22}
						\begin{split}
							d_{H}(B,A) < \de\mbox{, }y \in \emph{cent}_{\la B_{Y}}(B)
							\Ra B_{X}(y,\e) \cap \emph{cent}_{\la B_{Y}}(A) \neq \es.
						\end{split}
					\end{align}
					Let us choose $0 < \ga <\min \{2\de, \la - (\sup_{b \in B} \|b\| + \emph{rad}_{Y}(B))\}$. Let $A \in \mc{CB}(X)$ such that $d_{H}(B, A)< \frac{\ga}{2}$ and $y \in \emph{cent}_{Y}(B)$. Now by Lemma~\ref{L2}, \[\emph{rad}_{Y}(A) + \sup_{a \in A} \|a\| \leq \emph{rad}_{Y}(B) + \frac{\ga}{2} +\frac{\ga}{2} + \sup_{b \in B} \|b\| < \la.\] Hence by \cite[Lemma~2.1~(iii)]{T1}, $\emph{cent}_{Y}(B) = \emph{cent}_{\la B_Y}(B)$ and $\emph{cent}_{Y}(A) = \emph{cent}_{\la B_Y}(A)$. Therefore, it follows from $(\ref{Eqn22})$ that \[B_{X}(y,\e) \cap \emph{cent}_{B_{Y}}(A) \neq \es.\]
					
					Similar arguments as above apply for the \uc~part of the results in $(i)$ and $(ii)$ and hence we omit it. The result in {\sc $(iii)$} follows from $(i)$ and \cite[Remark~2.8]{VI} and the result in {\sc $(iv)$} follows from $(ii)$ and \cite[Remark~2.8]{VI}.
				\end{proof}
				
				\section{A variation of transitivity problem for property-$(P_1)$}\label{sec5}
				In this section, we provide answers to the transitivity type problem for property-$(P_1)$, stated in Question~\ref{Q2}, for certain cases. The following result generalizes the transitivity property proved in \cite[Proposition~3.2]{CT}.
				
				\begin{prop}\label{T4.6.2}
					Let $X$ be a Banach space, $Y$ be an $M$-summand in $X$ and $Z $ be a subspace of $Y$. If $(Y,Z,\mc{CB}(Y))$ satisfies property-$(P_1)$ then $(X,Z,\mc{CB}(X))$ satisfies property-$(P_1)$. 	    
				\end{prop}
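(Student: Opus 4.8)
The plan is to deduce this from the stability of property-$(P_1)$ under $\ell_{\iy}$-direct sums, namely Theorem~\ref{T4}. First I would unwind the definition of an $M$-summand: there is a linear projection $P\colon X \ra X$ with range $Y$ such that $\|x\| = \max\{\|Px\|, \|x-Px\|\}$ for every $x \in X$. Since $\|Px\| \leq \|x\|$, the projection is continuous, so $W \coloneqq \ker P$ is a closed subspace of $X$ and hence a Banach space; moreover $x = Px + (x-Px)$ with $Px \in Y$ and $x-Px \in W$, and $Y \cap W = \{0\}$, so $X = Y \oplus W$ algebraically. The identity $\|x\| = \max\{\|Px\|,\|x-Px\|\}$ then says precisely that $x \mapsto (Px,\, x-Px)$ is an isometric isomorphism of $X$ onto $Y \oplus_{\iy} W$, under which the subspace $Z$ of $Y$ corresponds to $Z \times \{0\} \ci Y \oplus_{\iy} W$. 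So it suffices to show that $(Y \oplus_{\iy} W,\, Z \times \{0\},\, \mc{CB}(Y \oplus_{\iy} W))$ has property-$(P_1)$.

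Next I would record the routine observation that the triplet $(W, \{0\}, \mc{CB}(W))$ has property-$(P_1)$. Indeed, for any $B \in \mc{CB}(W)$ one has $\emph{rad}_{\{0\}}(B) = r(0,B) = \sup_{b \in B}\|b\| < \iy$, so $\emph{cent}_{\{0\}}(B) = \{0\}$; thus $(\{0\}, \mc{CB}(W))$ has \trcp. Moreover $\emph{cent}_{\{0\}}(B,\de) = \{0\}$ for every $\de>0$, so the inclusion $\emph{cent}_{\{0\}}(B,\de) \ci \emph{cent}_{\{0\}}(B) + \e B_{W}$ holds trivially for all $\e, \de > 0$, which is property-$(P_1)$. (Equivalently, $\{0\}$ is strongly proximinal in $W$.)

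Finally I would apply Theorem~\ref{T4} with $X_1 = Y$, $V_1 = Z$, $X_2 = W$ and $V_2 = \{0\}$: the triplet $(X_1,V_1,\mc{CB}(X_1)) = (Y,Z,\mc{CB}(Y))$ has property-$(P_1)$ by hypothesis, and $(X_2,V_2,\mc{CB}(X_2)) = (W,\{0\},\mc{CB}(W))$ has property-$(P_1)$ by the previous step, so Theorem~\ref{T4} yields that $(Y \oplus_{\iy} W,\, Z \times \{0\},\, \mc{CB}(Y \oplus_{\iy} W))$ has property-$(P_1)$; transporting this back through the isometry gives property-$(P_1)$ for $(X,Z,\mc{CB}(X))$. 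I do not expect any genuine obstacle here: the only points needing a word of justification are that an $M$-summand really splits $X$ as an $\ell_{\iy}$-sum and that $\{0\}$ is an admissible member of $\mc{CV}(W)$ in Theorem~\ref{T4}. As an alternative to invoking Theorem~\ref{T4} as a black box, one could run the argument directly, using the description of $\emph{cent}_{Z \times \{0\}}(B)$ from Remark~\ref{rem1}(i) (where the component $B(2) \ci W$ enters only through its radius $r_2(B)$) together with Lemma~\ref{L4.6.1}; but the route through Theorem~\ref{T4} is by far the cleanest.
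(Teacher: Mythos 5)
Your proof is correct, and it takes a genuinely shorter route than the paper's, although both rest on the same decomposition. The paper does not invoke Theorem~\ref{T4} here: after writing $X = Y \oplus_{\iy} W$ it re-derives by hand exactly the special case of that theorem in which the second factor of $V$ is $\{0\}$ --- it proves the radius formula $\textrm{rad}_{Z}(B) = \max\{\textrm{rad}_{Z}(B(1)), \sup_{w \in B(2)}\|w\|\}$ (Proposition~\ref{Prop2} with $V_2=\{0\}$), then treats the two cases $\sup_{w \in B(2)}\|w\| > \textrm{rad}_{Z}(B(1))$ and $\leq$ separately, identifying $\textrm{cent}_{Z}(B,\eta)$ with $\bigcap_{y \in B(1)} B_{X}[y,R+\eta] \cap Z$ and applying Lemma~\ref{L4.6.1} in the first case, and reducing to $(Y,Z,\{B(1)\})$ in the second. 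Your observation that $(W,\{0\},\mc{CB}(W))$ trivially has property-$(P_1)$, so that the proposition is literally the instance $V_1=Z$, $V_2=\{0\}$ of Theorem~\ref{T4} transported through the isometry $x \mapsto (Px, x-Px)$, collapses all of this and makes the dependence on Section~\ref{S3} explicit; the two points you flag ($\{0\} \in \mc{CV}(W)$, and the $M$-summand giving an isometric $\ell_{\iy}$-splitting under which $Z$ becomes $Z \times \{0\}$) are indeed the only ones needing a word, and both hold. What the paper's longer version buys is only an explicit in-place description of $\textrm{cent}_{Z}(B)$ inside $X$.
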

				\begin{proof}
					Let $X = Y \oplus_{\iy} W$ for some subspace $W$ of $X$. Let $B \in \mc{CB}(X)$. We define
					\begin{equation*}
						\begin{split}
							&B(1) = \{y \in Y\colon \mbox{there exists }x \in B\mbox{ and }w \in W\mbox{ such that } x= y+w\}\mbox{ and }\\& B(2) = \{w \in W\colon \mbox{there exists }x \in B\mbox{ and }y \in Y\mbox{ such that } x= y+w\}.
						\end{split}
					\end{equation*}
					Clearly, $B(1) \in \mc{CB}(Y)$ and $B(2) \in \mc{CB}(W)$. 
					
					We first prove the following claim$\colon$ \[\emph{rad}_{Z} (B) = \max \{\emph{rad}_{Z}(B(1)), \sup_{w \in B(2)} \|w\|\}.\] 
					Let $z \in Z$ and $y \in B(1)$. Then there exists $x \in B$ and $w \in W$ such that $x=y+w$. Thus $\|z-y\| \leq \max \{\|z-y\| , \|w\|\} = \|z -x\| \leq r(z,B).$
					It follows that for each $z \in Z$, $r(z,B(1)) \leq r(z,B)$. Hence $\emph{rad}_Z(B(1)) \leq \emph{rad}_Z(B).$
					Now, let $w \in B(2)$ and $z \in Z$. Then there exists $x \in B$ and $y \in Y$ such that $x=y+w$. Thus $\|w\| \leq \max \{\|y-z\|, \|w\|\} = \|x-z\| \leq r(z,B).$
					It follows that $\sup_{w \in B(2)} \|w\| \leq \emph{rad}_{Z}(B)$. Thus 
					\begin{equation*}
						\max \left\{\emph{rad}_{Z}(B(1)),\sup_{w \in B(2)} \|w\|\right\} \leq \emph{rad}_{Z} (B).
					\end{equation*}
					Now, let $\e>0$ be such that $\max \left\{\emph{rad}_{Z}(B(1)),\sup_{w \in B(2)} \|w\|\right\} < \e$. Let us choose $0<\de< \e$ such that $\emph{rad}_{Z}(B(1)) < \de$ and $\sup_{w \in B(2)} \|w\| < \de$. Thus $r(z_0, B(1)) < \de$ for some $z_0 \in Z$. Let $x= y+w \in B$ such that $y \in B(1)$ and $w \in B(2)$. Thus $\|x-z_0\| = \max \{\|y-z_0\|, \|w\|\} < \de.$ It follows that $r(z_0, B) \leq \de < \e$ and hence $\emph{rad}_{Z}(B) < \e$. Since $\e>0$ is arbitrary, it follows that \[\emph{rad}_{Z} (B) \leq \max \left\{\emph{rad}_{Z}(B(1)),\sup_{w \in B(2)} \|w\|\right\}.\]This proves our claim.  
					
					It is easy to verify that $\emph{cent}_{Z}(B(1)) \ci \emph{cent}_{Z}(B)$, using the claim above
					
					{\sc Case 1$\colon$} $\sup_{w \in B(2)} \|w\| > \emph{rad}_{Z}(B(1))$
					
					Let us define $R= \emph{rad}_{Z}(B) = \sup_{w \in B(2)} \|w\|$. It is easy to verify that $\emph{cent}_Z(B) = \bigcap_{y \in B(1)} B_{X}[y,R] \cap Z$ and for each $\eta>0$, $\emph{cent}_{Z}(B,\eta) = \bigcap_{y \in B(1)} B_{X}[y,R+\eta] \cap Z$. 
					
					Let us fix $\e>0$. Then by Lemma~\ref{L4.6.1}, there exists $\de>0$ such that for each $B^\prime \in \mc{CB}(X)$ with $d_{H}(B(1),B^\prime) < 2 \de$ and a scalar $\be>0$ with $|\be - R| < 2 \de$, we have \[d_{H}\left(\bigcap_{y \in B(1)} B_{X}[y,R] \cap Z,\bigcap_{b \in B^{\prime}} B_{X}[b,\be] \cap Z\right)<\e.\] Let us choose $B^{\prime} = B(1)$ and $\be= R+ \de$. Then  \[d_{H}\left(\bigcap_{y \in B(1)} B_{X}[y,R] \cap Z,\bigcap_{y \in B(1)} B_{X}[y,R+\de] \cap Z\right)<\e.\] Thus $\bigcap_{y \in B(1)} B_{X}[y,R+\de] \cap Z \ci \bigcap_{y \in B(1)} B_{X}[y,R] \cap Z + \e B_{X}$, that is, $\emph{cent}_Z(B,\de) \ci \emph{cent}_{Z}(B)  +\e B_X.$ This proves that $(X,Y,\{B\})$ satsifies property-$(P_1)$.
					
					{\sc Case 2$\colon$} $\sup_{w \in B(2)} \|w\| \leq \emph{rad}_{Z}(B(1))$
					
					Clearly, $\emph{rad}_{Z}(B) = \emph{rad}_{Z}(B(1))$. Let $z \in \emph{cent}_{Z}(B)$. Then $r(z,B) = \emph{rad}_{Z}(B(1))$. Let $y \in B(1)$. Then there exists $x \in B$ and $w \in W$ such that $x =y+w$. Hence $\|z-y\| \leq \|z-x\| \leq r(z,B) =\emph{rad}_{Z}(B(1)).$ It follows that $z \in \emph{cent}_{Z}(B(1))$. Therefore, $\emph{cent}_{Z}(B)= \emph{cent}_{Z}(B(1))$. Similarly, for each $\eta>0$, $\emph{cent}_{Z}(B,\eta)= \emph{cent}_{Z}(B(1),\eta)$. Since $(Y,Z,\{B(1)\})$ satisfies property-$(P_1)$, it follows that $(X,Y,\{B\})$ has property-$(P_1)$.
				\end{proof}	
				
				Another instance where Question~\ref{Q2} is positively answered is as follows$\colon$
				\begin{prop}\label{P4.6.5}
					Let $X$ be an $L_1$-predual space. Let $Y$ be a finite co-dimensional subspace of $X$ and $J$ be an $M$-ideal in $X$ such that $Y \ci J$. If $Y$ is strongly proximinal in $J$, then the triplet $(X,Y,\mc{K}(X))$ satisfies property-$(P_1)$.
				\end{prop}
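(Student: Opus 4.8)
The plan is to combine Theorem~\ref{T9} with the geometry of $L_1$-preduals, the passage to the bidual being the main device. Since $J$ is an $M$-ideal in $X$, $Y$ is finite co-dimensional, $Y\ci J\ci X$, and $Y$ is strongly proximinal in $J$, Theorem~\ref{T9} gives immediately that $Y$ is strongly proximinal in $X$; by Remark~\ref{rem2}~(i) this is exactly the assertion that $(X,Y,\msc{S})$ has property-$(P_1)$, where $\msc{S}$ is the class of singletons. What remains is to upgrade $\msc{S}$ to the class $\mc{K}(X)$ of compact sets.

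For this I would work inside $X^{\ast\ast}$. As $X$ is an $L_1$-predual, $X^{\ast\ast}$ is isometric to a space $C(\Omega)$ with $\Omega$ compact Hausdorff, and $X$ embeds canonically into it; moreover, $J$ being an $M$-ideal in $X$ forces $J^{\perp\perp}$ to be an $M$-summand of $X^{\ast\ast}$, so $C(\Omega)=C(\Omega_1)\oplus_{\iy}C(\Omega_2)$ with $J^{\perp\perp}$ the first summand and $Y^{\perp\perp}\ci J^{\perp\perp}$ finite co-dimensional in $C(\Omega)$. Given a compact $K\ci X$, regarded as a norm-compact --- hence equicontinuous --- subset of $C(\Omega)$, the functions $h_{+}=\sup_{k\in K}\widehat{k}$ and $h_{-}=\inf_{k\in K}\widehat{k}$ lie in $C(\Omega)$ and, for every $s\ge 0$,
\[\bigcap_{k\in K}B_{X}[k,s]=\{x\in X\colon h_{+}-s\le \widehat{x}\le h_{-}+s \text{ on }\Omega\}.\]
Using the strong proximinality already obtained, the fact that $Y$ is finite co-dimensional, and a Goldstine-density argument, one checks that $(Y,\mc{K}(X))$ has \trcp~and that $\emph{rad}_{Y}(K)=\emph{rad}_{Y^{\perp\perp}}(K)$. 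Writing $R=\emph{rad}_{Y}(K)$, the displayed identity then yields $\emph{cent}_{Y}(K)=\bigcap_{k\in K}B_{X}[k,R]\cap Y$ and $\emph{cent}_{Y}(K,\de)=\bigcap_{k\in K}B_{X}[k,R+\de]\cap Y$ for every $\de>0$.

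Property-$(P_1)$ for $(X,Y,\mc{K}(X))$ thus amounts to: for each $\e>0$ there is $\de>0$ with $\bigcap_{k\in K}B_{X}[k,R+\de]\cap Y\ci\bigcap_{k\in K}B_{X}[k,R]\cap Y+\e B_{X}$. I would prove this in two stages. First, use norm-compactness of $K$ to replace it by a finite $\e$-net $F\ci K$; by Lemma~\ref{L2} this perturbs $R$ and the associated ball intersections only Hausdorff-slightly. Second, for the resulting intersection of \emph{finitely} many balls, apply Lemma~\ref{L4.6.1} with $V=Y$ (legitimate once \trcp~is available) to control $\bigcap_{z\in F}B[z,\al]\cap Y$ under small perturbations of $F$ and of the radius $\al$. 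The hard part --- the step I expect to be the main obstacle --- is that Lemma~\ref{L4.6.1} compares ball intersections only over a single \emph{fixed} finite family, whereas passing between $K$ and its net $F$ changes the family; converting the Hausdorff-smallness of the first stage into a genuine norm inclusion back into $\emph{cent}_{Y}(K)$ is what forces the use of \emph{strong} (not merely plain) proximinality of $Y$, in the form of the uniform ball-intersection behaviour of finite co-dimensional strongly proximinal subspaces, together with the $M$-summand splitting $C(\Omega)=C(\Omega_1)\oplus_{\iy}C(\Omega_2)$ coming from $J$: the latter isolates the $C(\Omega_1)$-part, on which the envelopes $h_{\pm}$ are handled directly, while the finitely many functionals cutting out $Y$ inside $J$ are controlled via strong proximinality. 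Reconciling these two mechanisms and carrying out the bookkeeping is the technical core of the argument.
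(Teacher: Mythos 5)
Your first step coincides exactly with the paper's: apply Theorem~\ref{T9} to the data $Y \ci J \ci X$ to conclude that $Y$ is strongly proximinal in $X$. But at that point the paper simply invokes \cite[Theorem~4.7]{T1}, which states precisely that a strongly proximinal finite co-dimensional subspace of an $L_1$-predual space satisfies property-$(P_1)$ for the class $\mc{K}(X)$; the entire upgrade from singletons to compact sets is outsourced to that citation. You instead set out to re-derive that upgrade from scratch, and your sketch of it has genuine gaps.

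The most concrete one is your intended use of Lemma~\ref{L4.6.1}: that lemma is stated only for $\al > \textrm{rad}_{V}(F)$, with all the quantitative control ($\ga$, $\la$, the convex combination with a center $v_0$) degenerating as $\al \downarrow \textrm{rad}_{V}(F)$. The inclusion you need for property-$(P_1)$, namely $\bigcap_{k}B_{X}[k,R+\de]\cap Y \ci \bigcap_{k}B_{X}[k,R]\cap Y + \e B_{X}$ with $R = \emph{rad}_{Y}(K)$, lives exactly at the critical radius where the lemma does not apply (this is why, in Theorem~\ref{T4} and Proposition~\ref{T4.6.2}, the lemma is only ever invoked when the relevant radius strictly exceeds the Chebyshev radius of the component in question). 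Beyond that, you have not established \trcp~of $(Y,\mc{K}(X))$ --- strong proximinality for singletons does not by itself give existence of restricted Chebyshev centers of compact sets --- and you yourself flag the reconciliation of the $\e$-net replacement with the ball-intersection estimates as an unresolved ``technical core.'' So while the route through $X^{\ast\ast}\cong C(\Omega)$ and the $M$-summand splitting induced by $J^{\perp\perp}$ is plausible in spirit, what you have written is an announcement of a proof of \cite[Theorem~4.7]{T1} rather than a proof; either carry out that argument in full (fixing the Lemma~\ref{L4.6.1} issue) or cite the result as the paper does.
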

				\begin{proof}
					By Theorem~\ref{T9}, $Y$ is strongly proximinal in $X$. Therefore, by \cite[Theorem~4.7]{T1}, $(X,Y,\mc{K}(X))$ satisfies property-$(P_1)$.
				\end{proof}
			
				\section*{Declarations}
				\begin{enumerate}
					\item {\sc Funding:} No funding was received to assist with the preparation of this manuscript.
					\item {\sc Competing interests:} The author has no competing interests to declare that are relevant to the content of this manuscript.
				\end{enumerate}
				
				 \bibliographystyle{plain}
				\bibliography{sofc}
			\end{document}